\newtheorem{theorem}{Theorem}
\newtheorem{conjecture}{Conjecture}
\newtheorem{lemma}{Lemma}
\newtheorem{question}{Question}
\newtheorem{claim}{Claim}
\newtheorem{fact}{Fact}
\theoremstyle{remark}
\theoremstyle{definition}
\newtheorem{case}{Case}
\newtheorem{definition}{Definition}
\newtheorem{observation}{Observation}
\newtheorem{subcase}{Subcase}
\numberwithin{subcase}{case}
\newtheorem{sscase}{Subcase}
\numberwithin{sscase}{subcase}
\newcommand{\ba}{\backslash}
\newcommand{\R}{\mathbb{R}}
\def\Ddots{\mathinner{\mkern1mu\raise\p@
\vbox{\kern7\p@\hbox{.}}\mkern2mu
\raise4\p@\hbox{.}\mkern2mu\raise7\p@\hbox{.}\mkern1mu}}
\begin{document}

\title{A Proof of the Cycle Double Cover Conjecture}
\author{Mary Radcliffe}

\begin{abstract}
Given a bridgeless graph $G$, the Cycle Double Cover Conjecture posits that there is a list of cycles of $G$, such that every edge appears in exactly two cycles. This conjecture was originally posed independently in 1973 by Szekeres and 1979 by Seymour. We here present a proof of this conjecture by analyzing certain kinds of cycles in the line graph of $G$. Further, in the case that $G$ is 3-regular, we prove the stronger conjecture that given a bridgeless graph $G$ and a cycle $C$ in $G$, then there exists a cycle double cover of $G$ containing $C$.
\end{abstract}

\maketitle

\section{Introduction}\label{S:intro}

The Cycle Double Cover Conjecture (CDCC) was originally posed independently by Szekeres \cite{szekeres1973polyhedral} in 1973 and Seymour \cite{seymour1979sums} in 1979. The conjecture is as follows:

\begin{conjecture}[CDCC]
If $G$ is a bridgeless graph, then there is a list of cycles $\mathcal{C}$ in $G$ such that every edge appears in exactly two cycles in $\mathcal{C}$.
\end{conjecture}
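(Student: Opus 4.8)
The plan is to reduce the conjecture to cubic graphs, reinterpret it as a statement about cycle decompositions of the line graph, and then produce the required decomposition by an extremal argument in which bridgelessness is the essential resource. For the reduction, a standard argument shows it suffices to prove the CDCC for cubic bridgeless graphs: a minimal counterexample (say one with fewest edges) must be cubic — one suppresses vertices of degree $2$, and at any vertex of degree at least $4$ one splits off a pair of edges so as to preserve bridgelessness, introducing a fresh cubic vertex — and in fact one may further assume it is cyclically $4$-edge-connected and not $3$-edge-colourable (a snark). So I will assume from now on that $G$ is cubic and bridgeless, and I will freely use the elementary fact that in a cubic graph every closed trail is a cycle, since a closed trail meeting a vertex twice would have to use four distinct edges there while a cubic vertex has only three.

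For the reformulation, when $G$ is cubic the line graph $L(G)$ is $4$-regular, and for each $v\in V(G)$ the three edges of $G$ at $v$ span a triangle $T_v$ in $L(G)$; these triangles partition $E(L(G))$, and each vertex of $L(G)$ — equivalently, each edge $uv$ of $G$ — lies in exactly the two triangles $T_u$ and $T_v$. Call a cycle of $L(G)$ \emph{good} if it never uses two edges of a single triangle $T_v$ consecutively. If $\gamma=e_1e_2\cdots e_k$ is a good cycle (its vertices $e_i$ being edges of $G$, with $e_i$ and $e_{i+1}$ sharing a $G$-endpoint), goodness forces $e_i$ and $e_{i+1}$ to meet at the endpoint of $e_i$ not already used with $e_{i-1}$, so there are $G$-vertices $x_1,\ldots,x_k$ with $e_i=x_ix_{i+1}$, and $\gamma$ is precisely the cycle $x_1x_2\cdots x_kx_1$ of $G$; conversely every cycle of $G$ arises this way. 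Since any cycle decomposition of the $4$-regular graph $L(G)$ meets every vertex in exactly two of its cycles, decompositions of $E(L(G))$ into good cycles correspond to cycle double covers of $G$ — in fact bijectively, the inverse computation being that when three doubly covered edges meet at a vertex $w$ of $G$, exactly one covering cycle uses each of the three pairs of edges at $w$. Hence it suffices to prove:
\begin{quote}
for every cubic bridgeless $G$, the graph $L(G)$ decomposes into good cycles.
\end{quote}
Since $L(G)$ is $4$-regular it certainly decomposes into cycles; the entire difficulty lies in the word \emph{good}.

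For the construction I would run an extremal/augmentation argument. Among all families $\mathcal{F}$ of edge-disjoint good cycles in $L(G)$, fix one maximizing the number of covered edges; if it covers $E(L(G))$ we are done, so suppose not and let $H$ be the subgraph of uncovered edges. Since each vertex of $L(G)$ meets at most two cycles of $\mathcal{F}$, every vertex has even degree in $H$, so $H$ decomposes into cycles; by maximality no cycle in any cycle-decomposition of $H$ is good, so $H$ is a nonempty ``tangled'' even subgraph every circuit of which backtracks inside some triangle $T_v$. The heart of the proof is to show that this cannot occur when $G$ is bridgeless: by analyzing the triangles meeting $H$ and performing local reroutings that trade bad stretches of $\mathcal{F}\cup H$ for good ones — tracking progress through the covering multiplicity $\cdeg{e}$ of each edge (the number of chosen cycles through $e$) — one should be able either to pull a good cycle out of $H$ or to enlarge $\mathcal{F}$, contradicting maximality, the only obstruction being a bridge of $G$. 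I expect this untangling step to be the main obstacle, and it is exactly where bridgelessness is indispensable: a bridge $uv$ of $G$ appears as a cut vertex of $L(G)$ through which no good cycle can pass, and the argument must establish that no comparable local obstruction survives anywhere else.
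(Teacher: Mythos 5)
Your reduction to cubic graphs and your reformulation in the line graph are sound and essentially coincide with the paper's setup: your ``good'' cycles (no two consecutive edges from one triangle $T_v$) are, for cubic $G$, exactly the paper's rainbow cycles in the vertex-colored line graph, and your bijection between good-cycle decompositions of $L(G)$ and cycle double covers of $G$ is the paper's Lemma \ref{equiv}. But from that point on you have not given a proof. The entire content of the theorem lies in actually producing the decomposition, and your final paragraph only states what would need to be shown: you posit a maximal edge-disjoint family $\mathcal{F}$ of good cycles, observe correctly that the leftover subgraph $H$ is even and contains no good cycle, and then assert that ``local reroutings'' tracking a covering multiplicity ``should be able'' to enlarge $\mathcal{F}$, with bridgelessness as the only obstruction. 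That is precisely the open step, not an argument for it. Nothing you have written rules out a maximal $\mathcal{F}$ whose complement $H$ is a nonempty even subgraph all of whose circuits backtrack inside triangles; you give no progress measure that is shown to strictly improve under a specified rerouting, no case analysis of how $H$ can meet the triangles $T_v$, and no point at which the hypothesis that $G$ has no bridge is actually invoked.

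For comparison, the paper does not use a maximal-counterexample/augmentation scheme at all. It isolates an explicit structural obstruction --- a ``cut vertex of Type $X$,'' a degree-4 cut vertex whose two edges into each pseudoblock are monochromatic, which is exactly the line-graph image of a bridge --- and then proves by induction on a broader class of edge-colored graphs (``good'' and ``almost-good'' graphs, closed under the contractions, subdivisions, and vertex-splittings used in the inductive step) that absence of this obstruction suffices for a rainbow cycle decomposition. The hard work is the case analysis showing that one can always remove a rainbow cycle without creating a Type $X$ cut vertex (Lemmas \ref{type2}--\ref{rainbowtri} and the long proof of Theorem \ref{T:mainthm}). Your proposal contains no analogue of this invariant or of the local surgeries that preserve it; until you supply the untangling lemma you yourself flag as the main obstacle, the proof is incomplete at its central step.
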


Such a list of cycles is typically referred to as a cycle double cover. Much effort has been spent on resolving this conjecture, and an excellent history of approaches to the problem can be found in several survey papers \cite{chan2009survey, jaeger1985survey}. In \cite{jaeger1985survey}, it is shown that it is sufficient to prove that every 3-regular graph has a cycle double cover, and it is this theorem that we prove in this work. Specifically, we show

\begin{theorem}
If $G$ is a cubic, bridgeless graph, then there is a list of cycles $\mathcal{C}$ in $G$ such that every edge appears in exactly two cycles in $\mathcal{C}$.
\end{theorem}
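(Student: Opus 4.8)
The plan is to recast a cycle double cover of a cubic graph as a structured cycle decomposition of its line graph. If $G$ is cubic then $L(G)$ is $4$-regular, and $E(L(G))$ is partitioned into the vertex-triangles $T_v$, $v\in V(G)$, where $T_v$ joins the three edges of $G$ at $v$. A short local count at a cubic vertex shows that in any cycle double cover the cycles through $v$ must realize exactly the three ``turns'' at $v$, each once; translating this, a cycle double cover of $G$ is the same thing as a decomposition of $E(L(G))$ into cycles in which no cycle ever uses two edges of a common triangle $T_v$ consecutively --- equivalently, a cycle decomposition compatible with the transition system that forbids, at the vertex $uv$ of $L(G)$, the two ``triangular'' transitions inherited from $T_u$ and from $T_v$. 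Call such a decomposition \emph{admissible}. So the first step is to prove, carefully, that the theorem is equivalent to the statement that $L(G)$ admits an admissible cycle decomposition, after which the entire argument can be conducted inside $L(G)$.

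Next I would set up the induction that simultaneously yields the stronger ``prescribed cycle'' statement. Fix a bridgeless cubic $G$ and a cycle $C$, and argue by induction on $|E(G)|$ that $G$ has a cycle double cover containing $C$; in line-graph terms, that $L(G)$ has an admissible cycle decomposition one of whose cycles is the image of $C$. The reductions are standard: we may take $G$ connected and simple, and by cutting along $2$- and $3$-edge-cuts --- distributing the cut edges and the prescribed cycle among the resulting smaller bridgeless cubic graphs, solving each by induction with an appropriately prescribed cycle, and gluing the admissible decompositions back together across the cut, which in the line-graph picture is just a matter of matching transitions at boundedly many vertices --- we reduce to $G$ cyclically $4$-edge-connected of girth at least $5$. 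If such a $G$ is $3$-edge-colorable, the three $2$-factors obtained from pairs of color classes already give an admissible decomposition, and a Kempe-chain argument arranges that the image of $C$ occurs among its cycles; this settles the colorable case.

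The remaining case --- $G$ a snark --- is the whole content of the conjecture, and is where I expect the real work. Here I would argue directly in $L(G)$ with its triangle transition system: start from any cycle decomposition of the $4$-regular graph $L(G)$ (one exists, e.g. from a $2$-factorization via Petersen's theorem, or by splitting an Eulerian circuit), record the ``bad'' vertices at which a forbidden triangular transition is used, and eliminate them one at a time by local re-pairings combined with rerouting along alternating trails, using cyclic $4$-edge-connectivity and girth $\ge 5$ to guarantee that the needed alternating trails exist, that no new bad vertices are created, and --- for the stronger statement --- that the cycle carrying $C$ is left intact, and that the process terminates. The key lemma to extract and prove is therefore: \emph{if $G$ is cyclically $4$-edge-connected of girth at least $5$, then $L(G)$ admits a cycle decomposition admissible for its triangle transition system, with one cycle prescribable.} The rest is bookkeeping; this lemma, and specifically showing that the failure of $3$-edge-colorability does not obstruct the alternating-trail surgery, is the main obstacle and the point at which the argument must genuinely go beyond the classical reductions.
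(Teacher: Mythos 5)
Your opening reduction is essentially the paper's: the colored line graph $L(G)$, with edges colored by the vertices of $G$ so that each vertex-triangle $T_v$ is a monochromatic triangle, has the property that rainbow cycles (equivalently, cycles avoiding two consecutive edges of a common $T_v$) correspond exactly to cycles of $G$, and a decomposition of $E(L(G))$ into such cycles is the same thing as a cycle double cover of the cubic graph $G$. That equivalence is Lemma~\ref{equiv} in the paper, and your ``admissible decomposition'' is the paper's ``rainbow cycle decomposition.'' So far the two arguments coincide.

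The problem is that everything after this reduction is a plan rather than a proof, and the plan defers the entire content of the conjecture to an unproven statement. Your proposed key lemma --- that for $G$ cyclically $4$-edge-connected of girth at least $5$, one can start from an arbitrary cycle decomposition of the $4$-regular graph $L(G)$ and eliminate the forbidden triangular transitions one at a time by ``local re-pairings combined with rerouting along alternating trails,'' without creating new bad vertices and with termination --- is exactly the hard part, and you give no argument for it; indeed you say yourself that it is ``the main obstacle.'' No mechanism is offered for why the surgery cannot cycle, why a bad vertex can always be repaired when $G$ is a snark, or why the count of bad vertices is monotone under the repair. The paper's proof supplies precisely the missing content by a different route: it enlarges the class of objects to ``good'' edge-colored graphs (even, maximum degree $4$, triangles rainbow or monochromatic, color degree $2$, small color classes), identifies the single obstruction to peeling off a rainbow cycle --- a cut vertex of Type $X$, the line-graph avatar of a bridge --- and then carries out an induction on vertices and edges with an extensive case analysis (contractions of monochromatic triangles, singular paths, vertex splittings, and an $X$-block decomposition) showing that one can always remove a rainbow or almost-rainbow cycle without creating a Type $X$ cut vertex. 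Nothing in your proposal substitutes for that analysis, so as written the argument has a genuine gap at its center. (Secondary, smaller gaps: the Kempe-chain claim that the prescribed cycle can be realized in the $3$-edge-colorable case, and the gluing of admissible decompositions across $2$- and $3$-edge-cuts while carrying the prescribed cycle, are both asserted without proof, though these are more plausibly routine.)
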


In \cite{cai1992cycle}, an approach to the CDCC is considered in which, rather than find cycle double covers in the graph $G$, one can instead find cycle double covers in the line graph $L(G)$. We here adapt this technique, and show that it is sufficient to produce a cycle decomposition in $L(G)$, rather than a double cover, where the cycles in the decomposition satisfy some particular properties. Using this adapted technique, we provide a full proof that every 3-regular, bridgeless graph has a cycle double cover.

Specifically, rather than consider the line graph alone, we color the edges of the line graph $L(G)$ with the vertices $V(G)$, so that for each $v\in V(G)$ there exists a monochromatic triangle in $L(G)$ of color $v$. We then note that cycles in $G$ correspond directly to rainbow cycles in $L(G)$, and hence a rainbow cycle decomposition in $L(G)$ corresponds to a cycle double cover in $G$. We note here that this is only true in the case of a 3-regular graph $G$, as for a 3-regular graph, any rainbow cycle decomposition in $L(G)$ will use each vertex of $L(G)$ exactly twice; that is, such a decomposition will use each edge of $G$ exactly twice.

The proof is obtained by induction on a larger class of edge-colored graphs, of which the line graph of any bridgeless cubic graph is a member. This class of graphs is formally defined in Section \ref{S:setup} below. Fundamentally, the only barrier to producing a cycle decomposition in this larger class of graphs will be a vertex that behaves like a bridge: it is a cut vertex, and has all its edges of one color into one block, and all edges of another color into another block. It is in forbidding this type of vertex that the work of the proof is found.

As we will note in Section \ref{S:conclusions}, our proof technique in fact resolves a stronger conjecture, due to Goddyn, in the case of 3-regular graphs. Specifically, Goddyn's conjecture is as follows.

\begin{conjecture}\label{C:Goddyn}
If $G$ is a bridgeless graph, and $C$ is a cycle in $G$, then there exists a cycle double cover of $G$ containing $C$.
\end{conjecture}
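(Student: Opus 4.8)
The plan is to deduce the general case from the 3-regular case of Conjecture \ref{C:Goddyn}, which we have established above, by a reduction to cubic graphs that is \emph{sensitive to the prescribed cycle}. The usual Jaeger-type reduction from arbitrary bridgeless graphs to cubic ones ignores any distinguished cycle, which is exactly why it does not immediately apply here; the work therefore lies in choosing an inflation of the high-degree vertices together with a lift of $C$ so that simultaneously (i) the inflated graph is cubic and bridgeless, (ii) $C$ is realized as a cycle in it, and (iii) any cubic double cover of the inflated graph can be pushed back to $G$ with $C$ left intact.

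First I would reduce to the case that $G$ is connected with minimum degree $3$. A bridgeless graph is the disjoint union of its components, and $C$ lies in a single component $G_0$; it then suffices to cover each component separately, using an arbitrary cycle in each component other than $G_0$ (a nonempty bridgeless graph always contains one, since every edge lies on a cycle). To eliminate vertices of degree $2$, I would suppress each maximal path through degree-$2$ vertices, replacing it by a single edge. This preserves bridgelessness and induces a bijection between cycles of $G$ and cycles of the suppressed graph, under which $C$ corresponds to a cycle $\widehat C$. The care needed here is with the multigraph degeneracies that suppression can create (parallel edges, or a loop when both incident edges reach the same neighbor), and with the fact that the 3-regular case is being invoked for multigraphs; this is legitimate because the line-graph construction underlying that case does not require simplicity.

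Next, for each surviving vertex $v$ of degree $d \ge 4$, I would inflate $v$ into a cycle gadget: replace $v$ by a cycle $u_1 \cdots u_d$ and reattach the $i$-th edge formerly at $v$ to $u_i$. Write $G'$ for the resulting graph. Every vertex of $G'$ has degree $3$, and $G'$ is bridgeless, since each gadget edge lies on its gadget cycle and each original edge, lying on a cycle of $G$, lifts to a cycle of $G'$. The cycle $\widehat C$ then lifts to a cycle $C'$ of $G'$: at each inflated $v$, the edges of $\widehat C$ at $v$ mark an even set $S$ of gadget vertices, these points cut the gadget cycle into arcs, and selecting alternate arcs joins the points of $S$ in pairs by edge-disjoint, internally vertex-disjoint paths. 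Routing $C'$ along these arcs makes $C'$ a cycle of $G'$ whose incidences outside the gadgets agree exactly with those of $\widehat C$.

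By the established 3-regular case of Conjecture \ref{C:Goddyn}, there is a cycle double cover $\mathcal{C}'$ of $G'$ containing $C'$. I would project $\mathcal{C}'$ back to $G$ by contracting every gadget cycle to its vertex $v$ and undoing the degree-$2$ suppressions. Contraction deletes the gadget edges and preserves the parity of the degree at $v$, so each member of $\mathcal{C}'$ projects to an even subgraph of $G$; decomposing each into edge-disjoint circuits yields a family $\mathcal{C}$ covering every edge of $G$ exactly twice. Since $C'$ traverses each gadget along a single system of internally disjoint arcs whose endpoints are precisely the edges of $\widehat C$ at $v$, the projection of the single member $C'$ is precisely $\widehat C$, hence, after reversing suppression, precisely $C$; projecting $C'$ on its own (without decomposition) retains $C$ as one cycle of $\mathcal{C}$. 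The main obstacle I anticipate is exactly this last bookkeeping: one must check that projecting and decomposing the \emph{other} members of $\mathcal{C}'$ never merges them with, nor splits, the designated member, so that $C$ survives as a single cycle, and one must confirm that the multigraph degeneracies introduced by suppression do not compromise the cubic input that the whole argument rests upon.
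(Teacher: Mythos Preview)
There is nothing in the paper to compare your argument against: the statement you are trying to prove is Conjecture~\ref{C:Goddyn}, which the paper explicitly leaves open. The paper establishes only the cubic special case (Theorem~2), and in Section~\ref{S:conclusions} writes that ``for graphs that are not 3-regular, Goddyn's conjecture remains unresolved.'' Your proposal is therefore not an alternative proof of something the paper shows; it is an attempt to extend the paper's result to the full conjecture via a vertex-inflation reduction.

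The core of your reduction is sound and is essentially the standard argument that the strong CDCC is equivalent to its cubic case: inflate each vertex of degree $d\ge 4$ into a $d$-cycle, route the prescribed cycle $C$ through one arc of each gadget it meets (since $C$ is a simple cycle, your ``even set $S$'' always has size exactly $2$, so the alternate-arc language is more general than needed), apply the cubic case, and contract back. The projection of $C'$ is literally $C$, and the other members project to even subgraphs whose cycle decompositions never touch the bookkeeping for $C$, since the edge multiset is tracked separately cycle by cycle; your worry about ``merging'' with or ``splitting'' the designated member is not a real obstruction.

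Where your argument has a genuine gap is the appeal to the paper's cubic result for \emph{multigraphs}. Your suppression step can create parallel edges between two degree-$3$ vertices, and your assertion that ``the line-graph construction underlying that case does not require simplicity'' is not something the paper establishes: when $e,f$ are parallel, the colored line graph acquires a multi-edge between $e$ and $f$ carrying two distinct colors, and the paper's inductive case analysis for good colored graphs is written for simple graphs. You must either (i) verify that Theorem~\ref{T:mainthm} and Lemma~\ref{type2} go through for such colored multigraphs, or (ii) rearrange the reduction so that the cubic graph you hand to Theorem~2 is simple---for instance, inflate \emph{every} vertex of degree at least $3$ (including those of degree exactly $3$) into a triangle before suppressing, which kills all potential parallels. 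Until one of these is done, the proposal does not close the gap the paper itself flags as open.
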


Our technique will immediately yield the following partial resolution to Conjecture \ref{C:Goddyn}.

\begin{theorem}
If $G$ is a cubic bridgeless graph, and $C$ is a cycle in $G$, then there exists a cycle double cover of $G$ containing $C$.
\end{theorem}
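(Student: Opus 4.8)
The plan is to transport the problem to the edge-colored line graph and then feed it to the machinery behind Theorem~1. Given the cubic bridgeless graph $G$ and a cycle $C = v_1 f_1 v_2 f_2 \cdots v_\ell f_\ell v_1$ in it, let $\widetilde C$ be the corresponding cycle in $L(G)$ on the vertices $f_1,\dots,f_\ell$, the $L(G)$-edge joining $f_i$ and $f_{i+1}$ being the one lying in the monochromatic triangle $T_{v_{i+1}}$ of color $v_{i+1}$. Since consecutive edges $f_i,f_{i+1}$ of $C$ share the distinct vertices $v_{i+1}$, the cycle $\widetilde C$ is rainbow; moreover it uses exactly one edge of each color $v\in V(C)$ and no edge of any color outside $V(C)$. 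As observed in Section~\ref{S:intro}, rainbow cycles in the $V(G)$-colored $L(G)$ correspond bijectively to cycles of $G$, and a rainbow cycle decomposition of $L(G)$---a partition of $E(L(G))$ into rainbow cycles---corresponds to a cycle double cover of $G$. Hence it suffices to produce a rainbow cycle decomposition of $L(G)$ one of whose members is $\widetilde C$.

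To that end I would delete the edge set of $\widetilde C$, forming the edge-colored graph $H := L(G) - E(\widetilde C)$, and argue that $H$ still belongs to the class of edge-colored graphs defined in Section~\ref{S:setup} on which the induction proving Theorem~1 runs. In $H$ each triangle $T_v$ with $v\in V(C)$ has been replaced by the two-edge path through the unique edge of $G$ at $v$ missing from $C$, each triangle $T_w$ with $w\notin V(C)$ is intact, each vertex $f\in E(C)$ of $L(G)$ has dropped from degree $4$ to degree $2$, and every other vertex still has degree $4$; all degrees remain even, so $H$ has cycle decompositions, and the only issue is getting a \emph{rainbow} one, which is precisely what membership in the inductive class secures. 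Granting that, the induction underlying Theorem~1 applied to $H$ yields a rainbow cycle decomposition $\mathcal D$, and then $\mathcal D\cup\{\widetilde C\}$ is a rainbow cycle decomposition of $L(G)$ containing $\widetilde C$: it covers every edge of $L(G)$, and every vertex $f$ of $L(G)$ lies in exactly two of its cycles (two from $\mathcal D$ when $f\notin E(C)$, one from $\mathcal D$ and one from $\widetilde C$ when $f\in E(C)$). Translating back produces a cycle double cover of $G$ containing $C$. One may equally well phrase this as running the induction of Theorem~1 from the outset with $\widetilde C$ prescribed as the first cycle removed.

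The crux---and the step I expect to be the main obstacle---is verifying that $H$ contains none of the forbidden bridge-like vertices: that deleting $\widetilde C$ neither creates a cut vertex whose edges of one color go into one block and edges of another color into another, nor otherwise violates the defining conditions of the class (in particular, reconciling the broken triangles $T_{v_i}$ with those conditions). This is where the hypotheses must do work: bridgelessness of $G$ keeps the two triangles $T_u,T_v$ meeting at any vertex $f=uv$ jointly attached to the rest of $H$, while the fact that $C$ is a genuine cycle---connected and $2$-regular on $V(C)$---constrains how the broken triangles fit together and rules out a newly created bridge-like vertex. There is also routine bookkeeping when $H$ is disconnected (handle each component, still in the class, separately) and when the degree-$2$ vertices of $H$ must be absorbed into the cycles of $\mathcal D$; but once the structural lemmas of the main proof are available these are straightforward, and it is the absence-of-bridge-like-vertex check that carries the real weight.
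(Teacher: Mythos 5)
Your proposal is essentially the paper's own argument: the paper resolves this by observing that in Lemma \ref{type2} the first rainbow cycle removed from $L(G)$ is entirely arbitrary (every vertex of the line graph of a cubic graph is of Type II), so one simply takes that first cycle to be the rainbow cycle $\widetilde C$ corresponding to $C$ and runs the induction of Theorem \ref{T:mainthm} on the remainder. The verification you flag as the crux---that $L(G)\ba \widetilde C$ has no cut vertex of Type $X$---is precisely the content of Lemma \ref{type2}, already established in the paper, so your outline is correct and follows the same route.
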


This paper is organized as follows. First, in Section \ref{S:defs}, we introduce the basic notations, concepts, and language we shall require. In Section \ref{S:outline}, we outline the proof of the CDCC, and reduce to an equivalent condition on a particular class of edge-colored graphs (Theorem \ref{T:mainthm}). In Section \ref{S:proof}, we prove Theorem \ref{T:mainthm}. Finally, in Section \ref{S:conclusions}, we conclude with some generalizations and conjectures that might be addressed using a similar technique.

\section{Definitions and Notations}\label{S:defs}

We here outline the basic tools and notations we shall require for this proof. Any language or notation not defined in this section, as well as any basic facts and observations made, can be found in \cite{chartrand2010graphs}.

As our primary object of study here will be cycles, we make a note on the notation used to describe cycles. Primarily, we shall write a path as $P=u_1, u_2, \dots, u_j$, and a  cycle as $C=(v_1, v_2, \dots, v_k, v_1)$, where the $v_i$ are the vertices involved. At times, we shall use the notation $C=(v_1, v_2, \dots, v_i, u_1, P, u_j, \dots, v_k, v_1)$ to indicate the cycle $C=(v_1, v_2, \dots, v_i, u_1, u_2, \dots, u_j, \dots, v_k, v_1)$. That is, inserting $P$ into the cycle implies that we take all internal vertices of $P$ as members of the cycle. In these cases, we include the endpoints $u_1$ and $u_j$ in the cycle definition to indicate the direction along which the path is followed.

On some occasions, we may refer to a cycle by its edges, in the form $C=(e_1, e_2, \dots, e_n)$. We shall routinely abuse notation and write $e\in C$ to indicate that the edge $e$ appears on the cycle $C$, even if the cycle is presented in vertex notation. We shall also occasionally write $C\subset E(G)$ to describe the cycle, as $C$ can be uniquely defined by the set of edges that appear in $C$.

Given a graph $G$ and a subgraph $H\subset G$, we write $G\ba H$ to indicate the subgraph of $G$ defined by $V(G\ba H)=V(G)$ and $E(G\ba H)=E(G)\ba E(H)$. Given a vertex $v$, we write $G\ba \{v\}$ as the subgraph of $G$ obtained by deleting $v$ from $V(G)$, and deleting all edges from $E(G)$ that include the vertex $v$.

In an edge-colored graph $G$, given a color $c$, we define the {\it color class} of $G$ corresponding to $c$ to be the set of edges in $G$ having color $c$. In this way, the color classes partition the edges of $G$. We will at times refer to the color class for a given color as a subgraph of $G$, by taking the subgraph having exactly these edges. A subgraph of $G$ is called {\it rainbow} if no two edges in the subgraph belong to the same color class. A subgraph of $G$ is called {\it monochromatic} if every edge in the subgraph belongs to the same color class. Throughout, if $G$ is a colored graph, and $H$ is a subgraph of $G$, we will assume that $H$ is also colored by the induced coloring from $G$.

Given a graph $G$, we define the colored line graph of $G$ to be the edge-colored graph $L=L(G)$, having
\begin{itemize}
\item $V(L) = E(G)$
\item $e\sim_{L} f$ if and only if $e$ and $f$ share a vertex in $G$
\item The edges of $L$ are colored by $V(G)$, where $c(ef)=v$ whenever $e$ and $f$ share the vertex $v$ in $G$.
\end{itemize}

Note that in any line graph colored in this way, the color classes form cliques in $L$, and each vertex of $L$ is a member of exactly two such cliques. In particular, since $G$ is 3-regular, the color classes here will be copies of $K_3$. Moreover, each vertex in $L$ will be a member of exactly two distinct color classes, so each vertex in $L$ is a member of exactly two monochromatic triangles, and has degree 4. Given any edge-colored graph, we shall say that $v$ has {\it color degree} $k$ if the number of colors assigned to the incident edges of $v$ is exactly $k$. Hence, in a line graph, every vertex has color degree 2. As for our purposes, the line graph of $G$ will always be colored, we shall omit the adjective ``colored'' and simply write ``line graph'' to mean the edge-colored line graph. We first make the following simple observation regarding triangles in $G$.

\begin{observation}\label{monos}
If $L$ is the colored line graph of a cubic graph $G$, and $G$ has no triangles, then the only triangles in $L$ are the color classes.
\end{observation}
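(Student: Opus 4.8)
The plan is to prove the contrapositive flavored statement directly: take a triangle $T = \{e, f, g\}$ in $L = L(G)$, and show that if $T$ is not a color class, then $G$ contains a triangle. Since the vertices of $L$ are the edges of $G$ and adjacency in $L$ means sharing an endpoint in $G$, the three edges $e, f, g$ of $G$ are pairwise adjacent, i.e., each pair shares a vertex of $G$. So the first step is a short case analysis on how three pairwise-intersecting edges in a graph can sit: either all three pass through one common vertex $v$ (in which case, because $G$ is cubic, they are exactly the three edges at $v$, hence form a color class of color $v$), or they do not share a common vertex.

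In the second case, I would argue that if $e, f, g$ pairwise intersect but have no common vertex, then, writing $e = v_1v_2$, $f = v_2v_3$ (their shared vertex being $v_2$), the edge $g$ must meet $e$ and must meet $f$; since $g \ne e, f$ and $g$ does not pass through $v_2$ (else all three share $v_2$), $g$ must use the vertex $v_1$ of $e$ and the vertex $v_3$ of $f$, so $g = v_1 v_3$. But then $v_1, v_2, v_3$ are three distinct vertices with $v_1v_2, v_2v_3, v_1v_3 \in E(G)$, which is a triangle in $G$ — contradiction. Hence in a triangle-free cubic $G$, every triangle of $L$ falls into the first case and is therefore a color class; conversely every color class is a triangle (in a cubic graph each color class is a $K_3$ as noted in the text), completing the proof.

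I expect essentially no serious obstacle here — the statement is elementary. The only point that needs a little care is making the elementary combinatorial dichotomy about three pairwise-intersecting edges fully rigorous: one should note that two distinct edges share at most one vertex, so the ``shared vertex'' of each pair is well-defined, and then check that the pairwise shared vertices are either all equal (common vertex case) or pairwise distinct (triangle case) — there is no mixed possibility, because if, say, $e \cap f = \{v_2\}$ and $e \cap g = \{v_2\}$ but $f \cap g = \{w\}$ with $w \ne v_2$, then $g$ contains both $v_2$ and $w$, and $f$ contains both $v_2$ and $w$, forcing $f = g$, a contradiction. With that dichotomy pinned down, the two cases are exactly ``color class'' and ``triangle in $G$,'' and cubicity is used only to identify the common-vertex triangles of $L$ with genuine color classes (three edges at a degree-3 vertex).

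Finally, I would state the conclusion carefully to match the observation as written: the hypothesis ``$G$ has no triangles'' kills the second case entirely, so the triangles of $L$ are precisely the color classes; and since $G$ is cubic, each color class really is a triangle (a $K_3$), so the correspondence is exact. This is the form in which the observation will be used later, presumably to guarantee that monochromatic triangles in $L$ are unambiguously detectable and that no ``accidental'' rainbow triangles appear when $G$ is triangle-free.
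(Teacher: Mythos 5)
Your proof is correct: the dichotomy for three pairwise-intersecting edges (common vertex versus three distinct pairwise intersection points forming a triangle in $G$), together with cubicity identifying the common-vertex case with a color class, is exactly the standard argument for this fact. The paper states this as an observation with no proof at all, so there is nothing to compare against; your write-up supplies the intended justification and I see no gap in it.
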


The following lemma is a key ingredient for the proof of the CDCC.

\begin{lemma}\label{ltog}
Let $C=(e_1, e_2, \dots, e_k)$. Then $C$ is a rainbow cycle in $L$ if and only if $C$ is a cycle in $G$.
\end{lemma}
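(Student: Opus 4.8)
The plan is to unwind the definition of the colored line graph $L = L(G)$ and track what a cycle $C = (e_1, e_2, \dots, e_k)$ in $L$ says about the edges $e_1, \dots, e_k$ of $G$. Recall that a closed walk $C = (e_1, e_2, \dots, e_k, e_1)$ in $L$ is determined by the sequence of adjacencies $e_i \sim_L e_{i+1}$ (indices mod $k$), and by definition each such adjacency means $e_i$ and $e_{i+1}$ share a vertex $v_i$ in $G$, with $c(e_i e_{i+1}) = v_i$. So $C$ is rainbow in $L$ exactly when the vertices $v_1, v_2, \dots, v_k$ are pairwise distinct.

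For the forward direction, suppose $C$ is a rainbow cycle in $L$. First note the $e_i$ are distinct (a cycle has distinct vertices), and $v_1, \dots, v_k$ are distinct by the rainbow hypothesis. I claim the $e_i$, read in order, form a cycle in $G$ on the vertex set $\{v_1, \dots, v_k\}$. The key point is that each $e_i$ meets exactly the two shared vertices $v_{i-1}$ and $v_i$ among the $v_j$: it shares $v_{i-1}$ with $e_{i-1}$ and $v_i$ with $e_{i+1}$. I must rule out the degenerate possibility $v_{i-1} = v_i$, i.e. that $e_{i-1}, e_i, e_{i+1}$ all pass through a single vertex — but that would force two of the colors $c(e_{i-1}e_i) = v_{i-1}$ and $c(e_i e_{i+1}) = v_i$ to coincide, contradicting rainbow. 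Hence $e_i$ has distinct endpoints $v_{i-1} \ne v_i$, and since $e_i$ is a single edge it \emph{is} the edge $v_{i-1}v_i$; consecutive edges share exactly one endpoint, so $(v_1, v_2, \dots, v_k, v_1)$ is a closed walk in $G$ traversing $e_1, \dots, e_k$, and as the $v_i$ are distinct it is a genuine cycle in $G$.

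For the converse, if $C = (e_1, \dots, e_k)$ is a cycle in $G$, write it in vertex form $(v_1, v_2, \dots, v_k, v_1)$ with $e_i = v_i v_{i+1}$. Then consecutive edges $e_i, e_{i+1}$ share the vertex $v_{i+1}$, so $e_i \sim_L e_{i+1}$ with $c(e_i e_{i+1}) = v_{i+1}$, and $e_k, e_1$ share $v_1$; thus $C$ is a closed walk in $L$. The edges $e_i$ are distinct since $C$ is a cycle in $G$, so $C$ is in fact a cycle in $L$, and its colors are exactly $v_2, v_3, \dots, v_k, v_1$ — a permutation of the distinct vertices $v_1, \dots, v_k$ — hence $C$ is rainbow. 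I expect the main (minor) obstacle to be the bookkeeping around small cases and the degenerate ``three edges at one vertex'' configuration in the forward direction: one needs the rainbow condition precisely to exclude it, and should be slightly careful that the argument is stated so it also handles short cycles (e.g. $k = 3$, where $C$ could a priori be a color class — but a monochromatic triangle is not rainbow, so this case causes no trouble).
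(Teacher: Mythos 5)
Your proof is correct and follows essentially the same route as the paper's: identify the color $v_i = c(e_ie_{i+1})$ with the shared vertex of consecutive edges, use the rainbow condition to get distinctness of the $v_i$, and read off the cycle $(v_1,\dots,v_k,v_1)$ in $G$ (and conversely). You fill in a detail the paper leaves as ``clear''---namely ruling out $v_{i-1}=v_i$ so that $e_i$ really is the edge $v_{i-1}v_i$---which is a welcome addition but not a different argument.
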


\begin{proof}
For the forward direction, we need only show that in $G$, the cycle $C$ does not reuse any vertex. Let $v_i = c(e_ie_{i+1})$, where $i$ is taken modulo $k$. Then we may write $C$ in its edge form in $L$ as $C=(v_1, v_2, \dots, v_k)$. Moreover, as $C$ is rainbow, the set $\{v_1, \dots, v_k\}$ has $k$ distinct colors. But then it is clear that $(v_1, v_2, \dots, v_k)$ is a cycle in $G$, having edges $e_1, e_2, \dots, e_k$, and hence every rainbow cycle in $L$ is also a cycle in $G$.

The other direction is similar.

The other direction is similar.
\end{proof}

Combining Lemma \ref{ltog} with Observation \ref{monos}, we have that if $G$ is a cubic graph, then the only triangles in $L$ are either monochromatic or rainbow. This piece of structure will be fundamental to our main proof.

\subsection{Contractions, subdivisions, and cuts}

Throughout the main proof, we shall frequently make use of ideas related to topological structure in graphs. Indeed, the proof will hinge on the analysis of a certain kind of cut-vertex. We here outline the basic concepts and observations that will feature in the proof.

Let $G$ be a connected graph. We say that $S\subset V(G)$ is a {\it cut-set} of $G$ if the graph $G\ba S$ is disconnected. If $S$ consists of a single vertex $v$, we say that $v$ is a {\it cut vertex} of $G$. We recall the basic fact that a vertex $v$ is a cut-vertex of $G$ if and only if there exists a pair of vertices $u, w\in V(G)$, with $u, w\neq v$, such that every path from $u$ to $w$ includes the vertex $v$.

A graph $G$ is called {\it 2-connected} if $G$ has no cut vertices. Equivalently, $G$ is 2-connected if for every pair of vertices $u, w\in V(G)$, there exist at least two paths from $u$ to $w$ that share no vertices other than the endpoints (such paths are called {\it internally vertex disjoint}). 

A {\it block} in a graph $G$ is a maximal 2-connected subgraph of $G$. One can view a block as an induced 2-connected subgraph $H$, in which the addition of any vertex to $V(H)$ yields a graph that is not 2-connected. As a result, the edges of $G$ can be partitioned uniquely into subsets $E_1, E_2, \dots, E_k$, such that each subset $E_i$ induces a block, and the corresponding blocks are edge-disjoint. We note that any two blocks in $G$ can share at most one vertex, and this vertex must be a cut vertex. Let $G_1, G_2, \dots, G_k$ be the unique blocks of $G$. The {\it block graph} of $G$ is defined as the graph $B=B(G)$ having $V(B)=\{G_1, G_2, \dots, G_k\}$, and $G_i\sim_BG_j$ if and only if $V(G_i)\cap V(G_j)\neq\emptyset$. We recall the following basic fact.

\begin{fact} Given a connected graph $G$, the block graph $B(G)$ is a tree.\end{fact}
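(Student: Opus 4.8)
The plan is to verify directly that $B(G)$ satisfies the two defining properties of a tree: it is connected, and it is acyclic. Connectedness follows quickly from the connectedness of $G$. Given two blocks $G_i$ and $G_j$, choose a vertex in each and a path $P$ between them in $G$. Walking along $P$ and recording which block each successive edge belongs to produces a sequence of blocks $G_i = H_0, H_1, \dots, H_r = G_j$ in which consecutive terms share a vertex: whenever the block changes at a vertex $x$ of $P$, the two edges of $P$ incident to $x$ lie in different blocks, which forces $x$ to be a cut vertex lying in both. Thus $H_{t}\sim_B H_{t+1}$ for each $t$, and we obtain a walk from $G_i$ to $G_j$ in $B(G)$; hence $B(G)$ is connected.

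For acyclicity I would argue by contradiction, exploiting the maximality of blocks. Suppose $B(G)$ contains a cycle $G_{i_1}, G_{i_2}, \dots, G_{i_m}, G_{i_1}$ with the blocks pairwise distinct and $m \geq 3$. Recall that two distinct blocks meet in at most one vertex, and that such a common vertex must be a cut vertex; write $\{c_\ell\} = V(G_{i_\ell}) \cap V(G_{i_{\ell+1}})$ with indices taken modulo $m$. The key claim is that the subgraph $H := G_{i_1} \cup \dots \cup G_{i_m}$ is itself $2$-connected. Granting this, $H$ is a $2$-connected subgraph of $G$ that properly contains $G_{i_1}$ — it contains, for instance, the vertices of $G_{i_2}$ that are not equal to $c_1$ — which contradicts the fact that $G_{i_1}$ is a \emph{maximal} $2$-connected subgraph. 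Therefore $B(G)$ has no cycle, and being connected it is a tree.

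The heart of the matter, and the step I expect to require the most care, is the claim that a cyclic chain of $2$-connected graphs glued along single cut vertices is $2$-connected. I would establish this using the internally-disjoint-paths characterization of $2$-connectivity: given $x, y \in V(H)$, if both lie in a single $G_{i_\ell}$ we invoke $2$-connectedness of that block directly; otherwise we route one $x$--$y$ path "clockwise" around the cyclic arrangement of blocks and a second path "counterclockwise," using $2$-connectedness inside each traversed block to pass through it while avoiding the relevant gluing vertices, so that the two global paths meet only at $x$ and $y$. Before running this argument I would dispose of the degenerate cases: there is no cycle of length two since $B(G)$ is simple, a cycle of length three with two of the $c_\ell$ coinciding (or otherwise reducible) can be shortened to a smaller cycle in $B(G)$, and in general one may assume the cycle under consideration is shortest, so that the gluing vertices $c_\ell$ are distinct and the chain is genuinely "cyclic." With those cases cleared, the clockwise/counterclockwise routing goes through, completing the proof.
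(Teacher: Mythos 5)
The paper states this Fact without proof, so there is no in-paper argument to compare against; judging your proposal on its own terms, the connectedness half is fine, but the acyclicity half has a genuine gap at exactly the step you flag as delicate. Your reduction to the case of distinct gluing vertices $c_\ell$ does not go through. Consider three blocks $G_{i_1}, G_{i_2}, G_{i_3}$ (say, three triangles) all meeting at a single cut vertex $c$. Then $c_1=c_2=c_3=c$, the three blocks are pairwise adjacent under the paper's definition of $B(G)$ (adjacency means nonempty vertex intersection), and the resulting triangle in $B(G)$ cannot be shortened, since a ``cycle of length two'' is not a cycle. Moreover the union $H=G_{i_1}\cup G_{i_2}\cup G_{i_3}$ is \emph{not} $2$-connected ($c$ is a cut vertex of $H$), so the maximality contradiction evaporates. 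This is not a repairable technicality in your argument: the configuration shows that the Fact is false as literally stated with the paper's definition of block graph, because three triangles glued at one point yield $B(G)\cong K_3$.

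What is true, and what your clockwise/counterclockwise routing essentially does prove, is that any cycle in $B(G)$ can only arise from a family of blocks all containing one common cut vertex. The standard correct formulation is that the \emph{block--cut-vertex tree} --- the bipartite graph on blocks and cut vertices, with a block joined to each cut vertex it contains --- is a tree. With that formulation your $2$-connectedness-of-the-union argument becomes a valid proof of acyclicity: a cycle in the block--cut tree either forces two distinct blocks to share two distinct cut vertices (impossible, as two blocks meet in at most one vertex) or produces a genuinely cyclic chain of at least three blocks with pairwise distinct gluing vertices, whose union your routing argument shows to be $2$-connected, contradicting maximality. So the repair is to restate the Fact (or to add a hypothesis excluding a cut vertex lying in three or more blocks), not to change the routing argument itself.
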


We shall frequently wish to focus our analysis on a single cut vertex, even though the graph may have more than one cut vertex. In this case, we use the following language. Given a cut vertex $v$, define the {\it pseudoblocks} of $G$ corresponding to $v$ to be subgraphs $G_1$ and $G_2$, having $V(G_1)\cap V(G_2)=\{v\}$, $E(G_1)\cup E(G_2)=E(G)$, and $E(G_1)\cap E(G_2)=\emptyset$. That is, we divide $G$ into two subgraphs, such that these subgraphs share the vertex $v$, but share no edges. We note that this can be done if and only if $v$ is a cut vertex; if we consider the block graph $B$, we may obtain the subgraphs $G_1$ and $G_2$ by taking $G_1$ as the union of a chosen block containing $v$ with all of its descendents, and $G_2$ as the union of all remaining blocks. As such, a pseudoblock decomposition is therefore not unique, but still satisfies the property that there will be no edges between the pseudoblocks.

We also routinely use the following basic fact about even graphs, a standard exercise in graph theory.
\begin{fact} If $G$ is an even graph, and $v$ has degree 2, then $v$ is not a cut vertex of $G$.
\end{fact}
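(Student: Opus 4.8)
The plan is to combine two elementary observations: in an even graph every edge lies on a cycle, and a vertex of degree exactly $2$ that lies on a cycle $C$ must contribute both of its incident edges to $C$, so that $C$ supplies a detour around that vertex.

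First I would locate a cycle through $v$. Let $a$ and $b$ be the neighbors of $v$; if $v$ has only one neighbor (or is incident to a loop) it is trivially not a cut vertex, so assume $a\neq b$, and write the two edges at $v$ as $va$ and $vb$. In the graph $G\ba\{va\}$ obtained by deleting the edge $va$, the vertices $v$ and $a$ are the only two of odd degree, and they lie in the same component: each component of any graph has an even number of odd-degree vertices, so $v$ and $a$ cannot be separated. Hence there is a $v$--$a$ path $P$ in $G\ba\{va\}$, and $C=P+va$ is a cycle of $G$ through $v$. Because $\deg_G(v)=2$, the two edges of $C$ incident to $v$ are exactly $va$ and $vb$, and therefore $Q:=C\ba\{v\}$ is an $a$--$b$ path that does not use $v$.

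Now suppose for contradiction that $v$ is a cut vertex. By the basic fact recalled in this section, there are vertices $u,w\neq v$ such that every $u$--$w$ path in $G$ passes through $v$; fix such a path $R$. Since $v$ is an internal vertex of $R$ and its only incident edges are $va$ and $vb$, the path $R$ contains the subpath $a,v,b$ (in one order or the other). Deleting the interior vertex $v$ from $R$ and splicing $Q$ into the gap produces a $u$--$w$ walk in $G$ that avoids $v$, and any such walk contains a $u$--$w$ path avoiding $v$ — contradicting the choice of $u,w$. Therefore $v$ is not a cut vertex.

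I expect the only point requiring any care to be the first step, the assertion that $v$ lies on a cycle; the parity argument above dispatches it, and it could alternatively be quoted from \cite{chartrand2010graphs} as the standard fact that the edge set of an even graph decomposes into cycles. The remaining manipulations — that $R$ meets $v$ between $a$ and $b$, and that a $v$-avoiding walk yields a $v$-avoiding path — are entirely routine.
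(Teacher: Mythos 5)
Your proof is correct: the parity argument correctly produces a cycle through $v$, and since $\deg(v)=2$ that cycle necessarily uses both edges at $v$, giving the detour needed to reroute any path around $v$. The paper states this fact without proof (as a ``standard exercise''), so there is nothing to compare against; your argument is a complete and standard way to fill it in.
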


Similarly, an edge $e\in E(G)$ is called a {\it bridge} of $G$ if $G\ba\{e\}$ is disconnected. As above, we have the following fact, another standard exercise.
\begin{fact}If $G$ is an even graph, then $G$ has no bridge.
\end{fact}

Let $G$ be a graph, and let $e=\{u, v\}\in E(G)$. The {\it contraction of $G$ along $e$} is the graph $H$ defined by $V(H)=V(G)\ba \{u, v\}\cup\{x\}$, where $x$ is a new vertex not found in $G$, and \[E(H)=E(G)\ba\{(y, z)\ |\ y=u\hbox{ or }v,\hbox{ or }z=u\hbox{ or }v\}\cup\{(x, y)\ | u\sim y\hbox{ or }v\sim y\}.\] That is to say, we create $H$ by removing the vertices $u, v$, and replacing them with a new vertex $x$, having as its neighbors all the neighbors of $u$ or $v$. We note that if $u$ and $v$ have a common neighbor, this can create multiple edges in $H$; this will not come up in our proof.

Given a graph $G$ and a subgraph $R$, the contraction of $G$ along $R$ is the graph $H$ obtained by contracting (in any order) all the edges in $R$. More generally, we refer to $H$ as a contraction of $G$. We may view $H$ as obtained by partitioning the vertices of $G$ into subsets $S_1, S_2, \dots, S_k$, and placing an edge between $S_i$ and $S_j$ if and only if there are vertices $u\in S_i$ and $v\in S_j$ such that $u\sim_Gv$. We shall write $[u]$ to denote the subset of this partition containing $u$. Then we have the following observation.

\begin{observation}\label{contraction}
Suppose that $G$ is a connected graph, and $H$ is obtained from $G$ by the contraction of a triangle in $G$, such that this triangle is not a block of $G$. Suppose that $x$ is a cut vertex of $G$. Then $[x]$ is a cut vertex of $H$.
\end{observation}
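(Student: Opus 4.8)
The plan is to split on whether the cut vertex $x$ lies on the contracted triangle; write $T=(a,b,c,a)$ for this triangle and $t$ for the vertex of $H$ it collapses to, so that $[x]=x$ if $x\notin\{a,b,c\}$ and $[x]=t$ otherwise. Two routine observations set things up. First, deleting a vertex $u\notin T$ and contracting $T$ commute, so $H\ba\{u\}=(G\ba\{u\})/T$. Second, $H\ba\{t\}$ is exactly $G\ba\{a,b,c\}$, since contracting $T$ and then deleting $t$ simply removes all edges of $G$ meeting $\{a,b,c\}$. Also recall that $H$ is connected because $G$ is.

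For the case $x\notin\{a,b,c\}$: since $x$ is a cut vertex, $G\ba\{x\}$ has components $D_1,\dots,D_m$ with $m\ge 2$, and as $T$ is connected and misses $x$, all of $T$ lies in a single component, say $D_1$. Contracting $T$ (which is inside $D_1$) merges only vertices of $D_1$ and introduces no edge leaving $D_1$, so $H\ba\{x\}=(G\ba\{x\})/T$ still has $m\ge 2$ components; hence $[x]=x$ is a cut vertex of $H$. This case needs nothing about $T$ being or not being a block.

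For the case $x\in\{a,b,c\}$, say $x=a$ so that $[x]=t$: let $D_1,\dots,D_m$ (with $m\ge2$) be the components of $G\ba\{a\}$; the edge $bc$ forces $b$ and $c$ into one component, say $D_1$. Since no edges run between distinct $D_i$, the components of $H\ba\{t\}=G\ba\{a,b,c\}$ are exactly those of $D_1\ba\{b,c\}$ together with $D_2,\dots,D_m$. If $D_1\ba\{b,c\}$ is nonempty then $H\ba\{t\}$ is disconnected and we are done. The only alternative is that $D_1$ is the single edge $bc$, i.e.\ $\deg_G b=\deg_G c=2$ with $N_G(b)=\{a,c\}$ and $N_G(c)=\{a,b\}$; this is where the hypothesis enters. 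I would argue it forces the (unique) block $B$ containing $T$ to equal $T$ itself: any vertex $d\in V(B)\ba\{a,b,c\}$ would, by $2$-connectivity of $B$, admit two internally vertex-disjoint $d$--$b$ paths inside $B$, but the only edges of $B$ at $b$ are $ab,bc$ and the only edges of $B$ at $c$ are $bc,ca$, so both such paths must pass through $a$, which is impossible. Hence $B=T$, contradicting that $T$ is not a block; therefore $D_1\ba\{b,c\}$ is nonempty and $[x]=t$ is a cut vertex of $H$.

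The step I expect to be the main obstacle is exactly this last sub-case: recognizing that the degenerate configuration ``$D_1$ is just the edge $bc$'' is precisely what is excluded by the hypothesis that $T$ is not a block, and converting ``$b$ and $c$ have degree $2$'' into ``$T$ is a block'' via the short $2$-connectivity argument above. Everything else is bookkeeping about how vertex deletion and contraction interact with the component structure of the graph.
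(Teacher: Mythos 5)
Your proof is correct, and it takes a noticeably more elementary route than the paper's. The paper argues through the block decomposition of $G$: the triangle lies in a block $G_1$ which, by the ``not a block'' hypothesis, properly contains the triangle; contraction replaces $G_1$ by a smaller but still non-singleton block and leaves the rest of the block tree untouched, so off-triangle cut vertices survive, and for an on-triangle cut vertex $v_1$ one exhibits a witness pair consisting of a vertex of $G_1$ off the triangle and a vertex on the far side of $v_1$. You instead work directly with the components of $G\ba\{x\}$ versus $H\ba\{[x]\}$, using the identities $H\ba\{u\}=(G\ba\{u\})/T$ and $H\ba\{t\}=G\ba\{a,b,c\}$, and you invoke the hypothesis only to exclude the single degenerate configuration in which the component of $G\ba\{a\}$ containing $b,c$ is the bare edge $bc$ --- which your short two-disjoint-paths argument correctly converts into ``$T$ is a block.'' Both proofs split on whether the cut vertex lies on the triangle and both use the hypothesis only in the on-triangle case, but your component bookkeeping is tighter: the paper's claims that ``$H$ has entirely the same structure as $G$'' and that \emph{every} path from \emph{any} $x\in G_1$ to \emph{any} $y\notin G_1$ passes through $v_1$ are stronger than what actually holds (the latter fails when $G_1$ meets the rest of $G$ at a cut vertex other than $v_1$), though only a correct weaker form is needed; your version avoids these overstatements at the cost of the extra Menger-type argument in the degenerate case. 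The block-decomposition view does buy a cleaner picture of \emph{why} the statement is true (contraction acts on one block of the block tree), which is in keeping with how the surrounding section of the paper is written.
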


\begin{proof}
Let $v_1, v_2, v_3$ be the vertices of the triangle in $G$ that is contracted to form $H$. Let $G_1, G_2, \dots, G_k$ be the block decomposition of $H$. Then since the triangle $v_1, v_2, v_3$ is 2-connected, we have that there exists a block, say $G_1$, having $v_1, v_2, v_3\in V(G_1)$. Moreover, $G_1$ also contains at least one other vertex. Then $H$ has entirely the same structure as $G$, except that $G_1$ is replaced by the contraction of $G_1$ along the triangle $(v_1, v_2, v_3)$, and this contraction is not a single vertex. Hence, any cut vertex $x$ other than $v_1, v_2, v_3$ is still a cut vertex after the contraction. If one of $v_1, v_2, v_3$ is a cut vertex, say $v_1$, then for every vertex $x\neq v_1$ in $G_1$, and every vertex $y\neq v_1$ that is not in $G_1$, every path from $x$ to $y$ passes through $v_1$. Let $x\in V(G_1)$, with $x\neq v_1, v_2, v_3$. Then in $H$, every path from $x$ to $y$ passes through $[v_1]$, and hence $[v_1]$ is a cut vertex in $H$.
\end{proof}

Let $G$ be a graph, and let $e=\{u,v\}\in E(G)$. The {\it subdivision} of $G$ at $e$ is the graph $H$ defined by $V(H)=V(G)\cup\{x\}$, where $x$ is a new vertex not found in $G$, and $E(H) = E(G)\ba\{u, v\}\cup\{u, x\}\cup\{v, x\}$. That is, $H$ is obtained from $G$ by removing the edge $e$ and replacing it with a length 2 path $uxv$. We have the following observation.

\begin{observation}\label{subdivide}
If $H$ and $G$ are both even graphs, and $H$ is obtained from $G$ by subdividing one edge, then $v$ is a cut vertex of $G$ if and only if $v$ is a cut vertex of $H$.
\end{observation}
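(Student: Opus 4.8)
The plan is to reduce this to two elementary facts: subdividing an edge of a graph preserves both connectedness and disconnectedness, and attaching a pendant vertex to an existing vertex does the same. Since $v$ must lie in both $V(G)$ and $V(H)$ for the statement to make sense, $v$ is distinct from the new subdivision vertex $x$; write $e=\{u,w\}$ for the edge of $G$ that gets subdivided, so that $H$ is obtained from $G$ by replacing $e$ with the path $u\,x\,w$. The argument then compares $G\ba\{v\}$ with $H\ba\{v\}$ directly, splitting on whether $v$ is an endpoint of $e$.

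If $v\notin\{u,w\}$, then deleting $v$ commutes with the subdivision: $H\ba\{v\}$ is exactly the subdivision of $G\ba\{v\}$ at the (still present) edge $e$. Hence $G\ba\{v\}$ is disconnected precisely when $H\ba\{v\}$ is, which is the claim. If instead $v=u$ (the case $v=w$ being symmetric), then deleting $u$ from $H$ removes the edge $ux$ but leaves $xw$, so $x$ survives as a pendant vertex adjacent to $w$; thus $H\ba\{u\}$ is $G\ba\{u\}$ with one pendant vertex attached at $w$, and again the two graphs are connected, or disconnected, together. In either case $v$ is a cut vertex of $G$ if and only if it is a cut vertex of $H$.

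There is essentially no deep obstacle here; the main points to watch are the bookkeeping ones, namely that the statement implicitly forces $v\neq x$, and that one must case-split on whether $v\in\{u,w\}$, since in that case the subdivision vertex degenerates from a point on a subdivided edge into a pendant. The evenness hypotheses on $G$ and $H$ are not needed for this topological core; they merely reflect the ambient setting of the paper and, via the earlier fact that a degree-$2$ vertex of an even graph is never a cut vertex, incidentally guarantee that $x$ itself is not a cut vertex of $H$.
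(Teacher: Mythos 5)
Your proof is correct. It is worth noting that the paper does not actually supply an argument for this observation: it only appends a remark that the statement can fail for non-even graphs because subdividing a bridge creates a new cut vertex, and that evenness forbids bridges. That remark addresses only the new subdivision vertex $x$; it does not explain why the cut-vertex status of the pre-existing vertices is unchanged, which is the substantive content of the observation as it is used later (e.g.\ in Subcase \ref{C:almost_tri}). Your component-counting argument fills exactly that gap: the case split on whether $v$ is an endpoint of the subdivided edge, showing that $H\ba\{v\}$ is either the subdivision of $G\ba\{v\}$ or $G\ba\{v\}$ with a pendant attached, correctly establishes that the two deleted graphs are disconnected together, and this part needs no evenness hypothesis at all. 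Your closing observation is also the right way to reconcile your proof with the paper's remark: evenness is only relevant if one insists on quantifying over all of $V(H)$, where it (together with the paper's earlier fact that a degree-$2$ vertex of an even graph is not a cut vertex) rules out $x$ itself becoming a cut vertex. So your route is both more elementary and strictly more informative than what the paper records.
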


Note that this observation does not hold in general; if a graph is not even, then it could have a bridge, in which case subdividing the bridge produces a new cut vertex. However, in the case of an even graph, since no bridge may be present the observation holds.

\section{Proof outline and main ingredients}\label{S:outline}

In order to prove the main theorem, we first use Lemma \ref{ltog} to obtain the following equivalent condition.

\begin{lemma}\label{equiv}
A 3-regular graph $G$ has a cycle double cover if and only if its line graph $L$ has a decomposition of its edges into rainbow cycles. 
\end{lemma}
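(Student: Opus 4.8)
The plan is to establish both directions by exploiting Lemma~\ref{ltog}, which gives a bijective correspondence between cycles in $G$ and rainbow cycles in $L$, together with the structural fact that in the colored line graph of a $3$-regular graph every vertex has color degree $2$.

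\medskip

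\noindent\textbf{Forward direction.} Suppose $\mathcal{C}=\{C_1,\dots,C_m\}$ is a cycle double cover of $G$. By Lemma~\ref{ltog}, each $C_i$ is also a rainbow cycle in $L$; let $\widehat{C}_i$ denote this rainbow cycle in $L$. I claim $\{\widehat{C}_1,\dots,\widehat{C}_m\}$ is a rainbow cycle decomposition of $L$. Recall that an edge $ef$ of $L$ has color $v$, where $v$ is the shared endpoint of $e$ and $f$ in $G$; since $G$ is $3$-regular, $v$ is incident to exactly three edges $e,f,g$ of $G$, so the edges of $L$ of color $v$ form the triangle on $\{e,f,g\}$. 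A rainbow cycle $\widehat{C}_i$ passing through a vertex $e$ of $L$ uses two edges at $e$, of two distinct colors, corresponding to the two endpoints of $e$ in $G$; so $\widehat{C}_i$ uses the edge $ef$ of $L$ (of color $v$) precisely when the cycle $C_i$ traverses the two edges $e$ and $f$ consecutively through $v$ in $G$, i.e.\ when $C_i$ passes through $v$ in $G$ using $e$ and $f$. Now fix an edge $ef$ of $L$, of color $v$; the cycles in $\mathcal{C}$ that use edge $ef$ of $L$ are exactly the cycles of $\mathcal{C}$ that pass through $v$ in $G$ using the pair of edges $\{e,f\}$. Since $\mathcal{C}$ is a double cover, each of the three edges at $v$ appears in exactly two cycles of $\mathcal{C}$; counting endpoint-incidences, the cycles through $v$ pair up its three edges, and a short parity/counting argument shows every one of the three possible pairs $\{e,f\},\{e,g\},\{f,g\}$ is used exactly once. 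In particular $ef$ lies on exactly one $\widehat{C}_i$, so the $\widehat{C}_i$ partition $E(L)$.

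\medskip

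\noindent\textbf{Reverse direction.} Conversely, suppose $L$ has a decomposition $\mathcal{D}=\{D_1,\dots,D_m\}$ into rainbow cycles. By Lemma~\ref{ltog}, each $D_i$ corresponds to a cycle $\overline{D}_i$ in $G$; I claim $\{\overline{D}_1,\dots,\overline{D}_m\}$ is a cycle double cover of $G$. Fix an edge $e$ of $G$, so $e$ is a vertex of $L$ of degree $4$, with its four incident $L$-edges split into two color classes of two edges each, corresponding to the two endpoints $u,w$ of $e$ in $G$. Because $\mathcal{D}$ decomposes $E(L)$, each of these four $L$-edges lies on exactly one $D_i$, and the $D_i$'s pass through the $L$-vertex $e$ a total of exactly two times (pairing up the four incident $L$-edges). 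Moreover, each time some $D_i$ passes through $e$, it uses two $L$-edges at $e$ of \emph{distinct} colors (rainbow), hence one edge of color $u$ and one of color $w$; translating back to $G$, each such passage corresponds to $\overline{D}_i$ traversing $e$ in $G$. Therefore $e$ is covered exactly twice by $\{\overline{D}_1,\dots,\overline{D}_m\}$. As $e$ was arbitrary, this is a cycle double cover.

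\medskip

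\noindent\textbf{Main obstacle.} The routine parts are the two translations via Lemma~\ref{ltog}; the subtle point, in both directions, is the local bookkeeping at a vertex $v$ of $G$ (equivalently a color class / monochromatic triangle of $L$), where one must verify that ``covering each edge of $G$ incident to $v$ exactly twice'' matches up exactly with ``using each of the three $L$-edges of color $v$ exactly once,'' and dually at a vertex $e$ of $L$. This hinges crucially on $3$-regularity: the color classes are triangles, so the three possible consecutive pairs at $v$ correspond bijectively to the three edges of the monochromatic triangle, and a pairing of $\{e,f,g\}$ using each element twice is forced to use all three pairs once each. For higher-degree $G$ the correspondence breaks, which is exactly why the reduction is stated for $3$-regular graphs; making this counting argument precise and noting where regularity is used is the only real content of the proof.
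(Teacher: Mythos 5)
Your proposal is correct and follows essentially the same route as the paper: both directions translate cycles via Lemma~\ref{ltog}, with the double-cover condition matched to the decomposition condition by local counting at a vertex $v$ of $G$ (the three pairs of edges at $v$ versus the three edges of the monochromatic triangle of color $v$) and at a vertex $e$ of $L$ (4-regularity forcing exactly two rainbow passages). Your write-up is in fact slightly more explicit than the paper's about why the three pairs at $v$ must each occur exactly once, but the argument is the same.
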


\begin{proof}
If $L$ has a decomposition into rainbow cycles, using Lemma \ref{ltog} gives the result immediately. As each edge of $L$ is used exactly once, and $L$ is 4-regular, we will thus have that each vertex of $L$ appears in precisely two cycles in the decomposition, i.e., each edge of $G$ appears in precisely two cycles in the decomposition.

For the other direction, suppose that $G$ has a cycle double cover $\mathcal{C}$. Consider a vertex $v$ with incident edges $vu$, $vw$, and $vx$. There will be precisely three cycles in $\mathcal{C}$ that include the vertex $v$, say $C_1, C_2, C_3$. Moreover, we have that each pairing $\{vu, vw\}$, $\{vw, vx\}$ and $\{vx, vu\}$ appears in exactly one of these cycles. Moreover, these pairings are precisely the edges of color $v$ in $L$, and thus, the corresponding rainbow cycles in $L$ use each edge with color $v$ exactly once, and no other cycle from $\mathcal{C}$ induces a cycle in $L$ including the color $v$. Hence, the cycles $\mathcal{C}$ are a rainbow cycle decomposition of the edges of $L$.
\end{proof}

Hence, it suffices to show that every line graph of a 3-regular graph has a rainbow cycle decomposition. In fact, we shall prove something slightly more general. We shall define a class of graphs as {\it good} if they satisfy a collection of characteristics that will always be satisfied by line graphs of 3-regular graphs. In this way, we can inductively find rainbow cycles in these graphs, remove them, and provided that the resulting structure is good, find more. The basic structure of the proof will be as follows. First, we define good graphs by isolating the characteristics of line graphs that are important to the cycle decomposition. We then observe some of the basic properties of such graphs, and develop terminology to discuss them. We then show by induction that every good graph $G$ has a rainbow cycle decomposition. In order to do so, we shall focus on a piece of local structure in $G$, and show that there is always a way to define a strictly smaller good (or almost-good) graph by deleting or contracting edges, or removing vertices and rewiring their edges. The proof will be by cases, depending on which particular local structures are present in $G$, and the structures of graphs obtained by the manipulation of the local structure in $G$. 

Fundamentally, the only obstruction we can have to finding a rainbow cycle in $G$ or a subgraph of $G$ is what we shall term a cut vertex of Type $X$: a cut vertex that has all of its edges of one color into one pseudoblock, and all of its edges of the other color into the other pseudoblock. Formally, we have the following definition.

\begin{definition} Let $G$ be an even edge-colored graph with maximum degree at most 4. We say a vertex $v$ is a we define a vertex $v$ to be a {\it cut vertex of type $X$} if the following conditions are met:
\begin{itemize}
\item $v$ is a cut vertex.
\item $v$ has degree 4.
\item There exist pseudoblocks $G_1$ and $G_2$ at $v$ such that $v$ has two edges into $G_1$ and two edges into $G_2$, and the edges in to each pseudoblock have the same color.
\end{itemize}
\end{definition}

A cut vertex of type $X$ is shown in Figure \ref{F:cvtypeX}.

\begin{figure}[htp]
\includegraphics{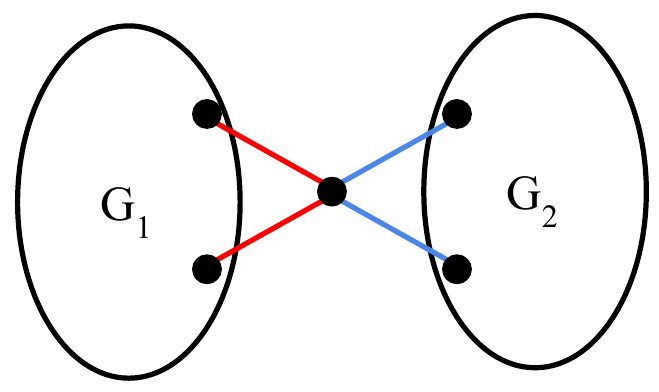}
\caption{A cut vertex of Type $X$. Note that there can be no rainbow cycles involving the central vertex, as necessarily a cycle must remain entirely in one of pseudoblocks $G_1$ or $G_2$.}
\label{F:cvtypeX}
\end{figure}

Notice that this is a clear obstacle to a rainbow cycle decomposition. If $v$ is a cut vertex of Type $X$, then the only cycles involving $v$ are those that remain entirely within the pseudoblocks $G_1$ and $G_2$, and hence must use two edges incident to $v$ in the same color class. Hence, $v$ and its incident edges cannot be members of any rainbow cycles. We note also that in the case that $v$ is a cut vertex of Type $X$ in an even graph $G$, we must have that there is a single component of $G_1\ba \{v\}$ such that both edges from $v$ into $G_1$ have their other endpoint at a vertex in this component; if this is not true, then these edges would be bridges, and as mentioned above, no even graph may have a bridge.

Essentially, as our proof will show, this is the only obstacle to a rainbow cycle decomposition in $G$. We note that if $G$ is itself the line graph of a cubic graph, a cut vertex of Type $X$ in $G$ corresponds to a bridge in in the original graph. Hence, we are isolating the ``bridge-like'' structure, and expressly forbidding it as we construct our decomposition. The difficulty of the proof lies not in finding a rainbow cycle in a good graph, but showing that one can always find a rainbow cycle such that upon its removal, there is not any cut vertex of Type $X$.

\subsection{Main ingredients}\label{S:setup}

In this section, we define our fundamental structures, and prove the key lemmas that will allow us to prove the main theorem. We first begin with a full definition of a good colored graph. Throughout, given a colored graph $G$, we shall use $c:E(G)\to \R$ to denote the coloring on the edges, even if such function has not explicitly been defined. 

\begin{definition}
Given an edge-colored graph $G$, we say $G$ is a {\it good} colored graph if the following conditions are met.
\begin{enumerate}
\item Every vertex of $G$ has even degree.\label{inheritb}
\item $G$ has maximum degree at most 4.
\item Every triangle in $G$ is either rainbow or monochromatic. \label{monotri}
\item Every nonisolated vertex of $G$ has color degree 2. \label{bender}
\item The subgraph induced by each color class has at most three vertices.\label{inherite}
\item $G$ has no cut-vertices of type $X$. \label{nox}
\end{enumerate}
\end{definition}

Note that these conditions force that every vertex of $G$ has at least two incident colors, and moreover, no color appears incident to $v$ more than two times (or else there would be more than three vertices incident to a given color.). This implies, together with property \eqref{inherite}, that the subgraph induced by each color class is either $K_2$, $P_2$, or $K_3$; that is, either a single edge, a path of length two, or a 3-clique. Further, we classify the vertices of a good colored graph into two types: Type I, a vertex of degree 2, having its two edges of different colors, and Type II, a vertex of degree 4, having two edges each of two different colors. We observe the following.

\begin{observation}
If $L$ is the line graph of a 3-regular graph, then $L$ is good.
\end{observation}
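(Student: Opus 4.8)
The plan is to verify each of the six defining conditions of a good colored graph directly from the definition of the colored line graph $L = L(G)$ of a $3$-regular graph $G$, using the structural facts already established in the excerpt (in particular Observation \ref{monos} and Lemma \ref{ltog}). Most of these checks are routine and local. Condition \eqref{inheritb} and condition \eqref{bender}: as noted when the colored line graph was introduced, since $G$ is $3$-regular each vertex of $L$ corresponds to an edge $e = \{u,v\}$ of $G$, and its neighbors in $L$ are the two other edges at $u$ together with the two other edges at $v$, so $e$ has degree $4$ in $L$, which is even; moreover those four incident edges of $L$ carry exactly the two colors $u$ and $v$, so every vertex has color degree $2$. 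Condition \eqref{inherite} (and hence the ``at most $4$'' part of \eqref{inheritb} being an equality, giving condition 2): a color class of color $w \in V(G)$ consists of all pairs $ef$ where $e,f$ are edges of $G$ meeting at $w$; since $\deg_G(w) = 3$ there are exactly three such edges, so the color class induces a $K_3$ on three vertices, and maximum degree in $L$ is $4 \le 4$.

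For condition \eqref{monotri}, every triangle in $L$ is rainbow or monochromatic: this follows by combining Lemma \ref{ltog} with Observation \ref{monos}, exactly as the paper remarks immediately after Lemma \ref{ltog} — a rainbow triangle in $L$ is a triangle in $G$, and any triangle in $L$ that is not a color class would have to be such a rainbow triangle; if $G$ happens to contain triangles, one checks directly that a triangle of $G$ still yields a rainbow triangle in $L$, so the only possibilities are rainbow (coming from a triangle of $G$) or monochromatic (a color class). The genuinely substantive condition is \eqref{nox}: $L$ has no cut-vertex of type $X$. Here I would argue by contradiction. Suppose $v \in V(L)$, corresponding to an edge $e = \{a,b\} \in E(G)$, is a cut vertex of type $X$, with pseudoblocks $L_1, L_2$ so that the two $L$-edges from $v$ into $L_1$ have one color and the two into $L_2$ have the other. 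The four incident edges of $v$ split by color as $\{$the two edges at $a$ other than $e\}$ (color $a$) and $\{$the two edges at $b$ other than $e\}$ (color $b$); so the type-$X$ condition says the two edges of $G$ at $a$ (besides $e$) lie, as vertices of $L$, in the same pseudoblock $L_1$, and the two at $b$ lie in $L_2$. I would then show this forces $e$ to be a bridge of $G$: any path in $G$ from a vertex of the $a$-side to a vertex of the $b$-side, translated into $L$, must pass through $v$ and (because $v$ is a cut vertex of $L$ separating $L_1 \setminus \{v\}$ from $L_2 \setminus \{v\}$) cannot cross between the sides except at $e$. Hence deleting $e$ disconnects $G$. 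But $G$ is bridgeless, a contradiction; therefore no cut vertex of type $X$ exists. (One should be slightly careful that the pseudoblock decomposition in the definition of type $X$ is just \emph{some} decomposition at $v$, but that only makes the separation statement in $L$, and hence in $G$, easier to extract.)

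The step I expect to be the main obstacle is precisely condition \eqref{nox}: unlike the other five, which are purely local degree/color bookkeeping, this one requires translating a cut-vertex-plus-color condition in $L$ back into a bridge in $G$, and making rigorous the claim that the pseudoblock structure of $L$ at $v$ genuinely certifies that the edges of $G$ on the two sides of $e$ cannot be reconnected without using $e$. The cleanest way to handle it is to identify the two connected ``sides'' of $G \setminus \{e\}$ with the two pieces $L_i \setminus \{v\}$ and invoke Lemma \ref{ltog} (rainbow paths in $L$ correspond to paths in $G$) to transport connectivity; the remaining bookkeeping — matching the colors $a, b$ to the two pseudoblocks and confirming the correspondence is consistent — is routine. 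Everything else reduces to the observations already recorded in Section \ref{S:defs}.
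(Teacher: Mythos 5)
Your proposal is correct and follows exactly the route the paper leaves implicit: the paper states this observation without proof, but its surrounding remarks (the degree/color bookkeeping when the colored line graph is introduced, the note after Lemma \ref{ltog} that all triangles in $L$ are monochromatic or rainbow, and the comment that a cut vertex of Type $X$ in $L$ corresponds to a bridge in $G$) are precisely the six checks you carry out, and your translation of a Type-$X$ cut vertex back to a bridge via paths of consecutively incident edges is sound. One point worth making explicit: your argument for condition (6) necessarily invokes the hypothesis that $G$ is bridgeless, which the observation as stated omits but which is required --- the line graph of a cubic graph with a bridge genuinely fails condition (6).
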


Moreover, as we shall be primarily seeking rainbow cycles in good graphs, we note that removing a rainbow cycle from a good graph will automatically preserve almost every property of goodness.

\begin{observation}\label{heredity}
If $G$ is a good colored graph, and $R$ is a rainbow cycle in $G$, then $G\ba R$ inherits properties \eqref{inheritb}-\eqref{inherite} from $G$.
\end{observation}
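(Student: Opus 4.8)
The plan is to verify the five properties \eqref{inheritb}--\eqref{inherite} for $G\ba R$ one at a time, all of them flowing from a single structural fact about $R$: a rainbow cycle meets every vertex of $G$ in exactly $0$ or $2$ incident edges, and when it meets a vertex $v$ in two edges, those two edges lie in different color classes. To see this, let $v$ be a vertex lying on $R$. Since $v$ is nonisolated, property \eqref{bender} gives that its incident edges fall into at most two color classes; since $R$ is rainbow it contains at most one edge of any single color; and since $R$ is a cycle it uses an even number of edges at $v$. Hence $R$ uses $0$ or $2$ edges at $v$, and in the latter case these two edges have distinct colors.

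Granting this, properties \eqref{inheritb} and the bound on maximum degree are immediate: deleting $0$ or $2$ edges at every vertex preserves even degree, and deleting edges can only decrease degrees. For property \eqref{monotri}, note that $G\ba R$ has the same vertex set as $G$ and a subset of its edges, so every triangle of $G\ba R$ is a triangle of $G$ with the same induced coloring, hence rainbow or monochromatic. Likewise, for property \eqref{inherite}, the color class of a color $c$ in $G\ba R$ is contained in the color class of $c$ in $G$, so the subgraph it induces has vertex set contained in that of the color-$c$ subgraph of $G$, and therefore at most three vertices.

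The one property genuinely using the full strength of the observation is \eqref{bender}. Let $v$ be nonisolated in $G\ba R$. If $v$ is not on $R$, its incident edges are unchanged and its color degree remains $2$. If $v$ lies on $R$, then $v$ must have had degree $4$ in $G$: a Type I vertex on $R$ would have both of its edges deleted and become isolated, contradicting our assumption. So $v$ is Type II, with two edges of one color $a$ and two of another color $b$; by the observation, $R$ deletes exactly one edge of color $a$ and one of color $b$ at $v$, leaving one edge of each color, so $v$ again has color degree $2$ in $G\ba R$.

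I expect no serious obstacle in this argument; the only subtle point is property \eqref{bender}, where one must rule out a vertex dropping to color degree $1$, and this is precisely where the rainbowness of $R$ is needed (it forces the two deleted edges at a Type II vertex to have different colors). Property \eqref{nox} is deliberately omitted from the claim, since removing a rainbow cycle can create a cut vertex of type $X$; controlling this is exactly the difficulty handled in the main proof rather than in this observation.
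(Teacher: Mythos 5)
Your proof is correct and is exactly the routine verification the paper leaves implicit by labelling this an Observation (the paper gives no proof): the key point that a rainbow cycle uses $0$ or $2$ edges at each vertex, of distinct colors when $2$, is precisely what makes properties \eqref{inheritb}--\eqref{inherite} hereditary, with \eqref{bender} being the only one requiring the rainbow hypothesis. Your closing remark that property \eqref{nox} is the genuinely non-hereditary one also matches the paper's own framing immediately after the observation.
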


Hence, if we remove a rainbow cycle from a good colored graph, in order to check if the resulting colored graph is good, we need only verify property \eqref{nox}. 

Let us consider some basic properties of good colored graphs. First, we shall examine good colored graphs for which every vertex is of Type II. We note that in this case, we must have that the subgraph induced by each color class is a triangle, as every vertex incident to that color class has exactly two edges of that color.

\begin{lemma}\label{conn} If $G$ is a connected good colored graph consisting entirely of vertices of Type II, and $R$ is a rainbow cycle in $G$, then $G\ba R$ is connected.
\end{lemma}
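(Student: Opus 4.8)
The plan is to show $G\ba R$ is connected by taking any path of $G$ and rerouting it around the edges that $R$ removes. First I would record the structural consequence of every vertex being of Type~II: each color class of $G$ is $2$-regular (every vertex incident to a given color meets exactly two edges of that color), and since the subgraph induced by a color class has at most three vertices, it must be a monochromatic triangle $K_3$. In particular, every edge of $G$—and hence every edge of $R$—lies in a monochromatic triangle, which is the feature that makes the rerouting possible.

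The key step is a local detour. Let $e=xy$ be an edge of $R$ and let $xyz$ be the monochromatic triangle (color class) containing it. Because $R$ is rainbow, it contains at most one edge of color $c(e)$, namely $e$ itself; hence neither $xz$ nor $zy$ belongs to $R$, so both lie in $G\ba R$. Thus every edge of $R$ can be bypassed by a two-edge path through the third vertex of its triangle, using only edges of $G\ba R$.

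With this in hand I would finish directly. Fix $a,b\in V(G)=V(G\ba R)$; since $G$ is connected there is a path $P=a=u_0,u_1,\dots,u_m=b$ in $G$. Traverse $P$ edge by edge, keeping each edge $u_{i-1}u_i\notin R$ and replacing each edge $u_{i-1}u_i\in R$ by its two-edge detour through the third vertex of its triangle. This produces a walk from $a$ to $b$ all of whose edges lie in $G\ba R$, so $a$ and $b$ lie in the same component of $G\ba R$; as $a,b$ were arbitrary, $G\ba R$ is connected. (No vertex is isolated either: a Type~II vertex has degree $4$ and loses at most two edges when $R$ is deleted, so $G\ba R$ has minimum degree at least $2$, and the walk argument genuinely certifies connectivity of the whole vertex set.) I expect essentially no real obstacle here; the only delicate point is checking that the detour edges avoid $R$, and this holds precisely because a rainbow cycle can use at most one edge from each monochromatic color class.
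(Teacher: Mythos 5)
Your proof is correct and rests on the same key observation as the paper's: since every vertex is of Type II, each color class is a monochromatic triangle, and a rainbow cycle uses at most one edge of each such triangle, so the other two edges survive in $G\ba R$. The paper packages this as a proof by contradiction (an edge of $R$ crossing between components would force the two surviving edges of its color class into different components, yielding a color class on four vertices, contrary to condition (5)), while you turn the identical fact into an explicit two-edge detour and reroute an arbitrary path; both arguments are sound, and your direct version also cleanly notes that no vertex of $G\ba R$ is isolated.
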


\begin{proof}
Suppose not, so that $G\ba R$ has components $G_1, G_2, \dots, G_k$ for some $k\geq 2$. Then wolog there exists an edge $e=\{x, y\}\in R$ such that $e$ has one endpoint $x$ in $G_1$, and the other endpoint $y$ in $G_2$. Let $\alpha=c(e)$. Note that there are two other edges in $G$ of color $\alpha$, since every vertex in $G$ is of Type II. As $R$ is a rainbow cycle, both of these edges must appear in $G\ba R$. Note that one such edge must be incident to $x$, and one such edge must be incident to $y$, and hence we have one edge of color $\alpha$ in $G_1$ and one edge of color $\alpha$ in $G_2$. But these two edges then share no incident vertices, a violation of condition \eqref{inherite} of good colored graphs. Thus, a contradiction has been reached, and thus $G\ba R$ is connected.
\end{proof}

\begin{lemma}If $G$ is a good colored graph consisting entirely of vertices of Type II, then there exists a rainbow cycle $C$ in $G$. Moreover, for every rainbow cycle $C$ in $G$, $G\ba C$ is also good.\label{type2}
\end{lemma}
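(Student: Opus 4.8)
The plan is to treat the two claims in turn. For the \emph{existence} of a rainbow cycle I would take a longest rainbow path $P = u_0, u_1, \dots, u_m$ in $G$ (one exists since $G$ has an edge). Since every vertex is of Type II, the endpoint $u_m$ is incident to exactly one color $\theta \neq c(u_{m-1}u_m)$, and it has two $\theta$-edges. If $\theta$ does not occur on $P$, then maximality forces both endpoints of those $\theta$-edges to lie on $P$; writing one of them as $u_i$, the colors of $u_iu_{i+1}, \dots, u_{m-1}u_m, u_mu_i$ are distinct, and one checks $i \leq m-2$ (so the closed walk is a genuine cycle), giving a rainbow cycle $(u_i, u_{i+1}, \dots, u_m, u_i)$. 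If instead $\theta = c(u_ju_{j+1})$, then $u_j, u_{j+1}, u_m$ are distinct vertices all incident to $\theta$, so --- since the $\theta$-color class is a triangle, this being the one place Type II enters --- that class is the triangle on $\{u_j, u_{j+1}, u_m\}$; hence $u_m \sim u_{j+1}$, and a short check ruling out $j \geq m-2$ shows $(u_{j+1}, u_{j+2}, \dots, u_m, u_{j+1})$ is a rainbow cycle. (Alternatively, such a $G$ is the colored line graph of a simple cubic graph $H$ --- with $V(H)$ the color classes of $G$ and $E(H)$ the vertices of $G$ --- and one may invoke Lemma \ref{ltog} on any cycle of $H$.)

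For the \emph{heredity} claim, by Observation \ref{heredity} it suffices to check property \eqref{nox}: that $G \ba C$ has no cut vertex of Type $X$. I may assume $G$ is connected (handling other components separately), so that $G \ba C$ is connected by Lemma \ref{conn}. Suppose toward a contradiction that $v$ is a Type $X$ cut vertex of $G \ba C$. It must have degree $4$ in $G \ba C$; since every vertex of $C$ has degree $4 - 2 = 2$ there, $v \notin V(C)$, so all four edges at $v$ survive. Write the colors at $v$ as $\alpha$ (edges $va_1, va_2$) and $\beta$ (edges $vb_1, vb_2$), and let $Q_1, Q_2$ be pseudoblocks of $G \ba C$ at $v$ with $va_1, va_2$ in $Q_1$ and $vb_1, vb_2$ in $Q_2$. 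Then $a_1, a_2$ and $b_1, b_2$ lie in the two disjoint sets $V(Q_1) \ba \{v\}$ and $V(Q_2) \ba \{v\}$, and $(G \ba C) \ba \{v\}$ is the disjoint union of $Q_1 \ba \{v\}$ and $Q_2 \ba \{v\}$.

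I would then split on whether some edge of $C$ joins $V(Q_1) \ba \{v\}$ to $V(Q_2) \ba \{v\}$. If none does, assigning each edge of $C$ to $Q_1$ or $Q_2$ by the side containing its endpoints produces a pseudoblock decomposition of $G$ at $v$ that exhibits $v$ as a Type $X$ cut vertex of $G$ --- contradicting goodness of $G$. Otherwise some edge $e = pq$ of $C$ has $p$ on the $Q_1$-side and $q$ on the $Q_2$-side. Its color $\gamma$ is not $\alpha$: the $\alpha$-class is the triangle $\{v, a_1, a_2\}$, and an $\alpha$-edge of $C$ cannot meet $v \notin V(C)$, hence would be $a_1a_2$, whose endpoints both lie on the $Q_1$-side --- contradicting that $e$ crosses; similarly $\gamma \neq \beta$. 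So $v$ is not incident to $\gamma$, whence the third vertex $r$ of the $\gamma$-triangle is not $v$; and as $C$ is rainbow it uses only the $\gamma$-edge $pq$, so $pr$ and $qr$ both survive in $G \ba C$ and avoid $v$. But then in $(G \ba C) \ba \{v\} = (Q_1 \ba \{v\}) \sqcup (Q_2 \ba \{v\})$ the edge $pr$ puts $r$ in $p$'s component and $qr$ puts $r$ in $q$'s component, which are different --- a contradiction.

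The main obstacle is this heredity statement, and within it the crossing-edge case: one must rule out that deleting a rainbow cycle manufactures a new ``bridge-like'' vertex. What makes it go through is that a rainbow cycle consumes at most one edge of each color, so the remaining two edges of any triangular color class stay behind and re-establish the connection that a Type $X$ vertex would forbid. Beyond that it is mostly careful pseudoblock bookkeeping --- in particular verifying that $(G \ba C) \ba \{v\}$ really decomposes as $(Q_1 \ba \{v\}) \sqcup (Q_2 \ba \{v\})$. By comparison, producing a rainbow cycle in the first place is routine.
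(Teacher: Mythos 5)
Your proof is correct and follows essentially the same strategy as the paper: a maximal rainbow path closed off via the monochromatic triangle of the repeated color for existence, and, for heredity, the observation that any edge of $C$ crossing the pseudoblock partition at a would-be Type $X$ vertex belongs to a triangular color class whose other two edges survive in $G\ba C$ and reconnect the pseudoblocks. Your handling of the heredity step is, if anything, a little more self-contained than the paper's (which routes the crossing edge through Lemma \ref{conn} to argue it must be a single edge), but the underlying idea is identical.
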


\begin{proof}
If $G$ has a rainbow triangle, then clearly $G$ has a rainbow cycle. Let us assume, then, that $G$ has no rainbow triangle. Choose any vertex $v\in V(G)$. Build a rainbow path $v, v_1, v_2, v_3, \dots, v_k$ beginning at $v$ by arbitrarily choosing $v_{i+1}$ from among all neighbors of $v_i$ that do not use any of the colors $c(vv_1), c(v_1v_2), \dots, c(v_{i-1}v_i)$, until such a choice is impossible. Then as $v_k$ is a Type II vertex, it must have two incident edges of color $\alpha$, such that $\alpha = c(v_jv_{j+1})$ for some $j<k-1$, and moreover these three edges of color $\alpha$ form a triangle. Therefore, the edges $v_kv_j$ and $v_kv_{j+1}$ both present, and both with color $\alpha$, and no other edges in the rainbow path $v, v_1, \dots, v_k$ use color $\alpha$ (see Figure \ref{Lemma1a}). Hence, we have the rainbow cycle $C=(v_k,v_{j+1},v_{j+2},\dots, v_{k-1},v_k)$.

\begin{figure}[htp]
\includegraphics[scale=1]{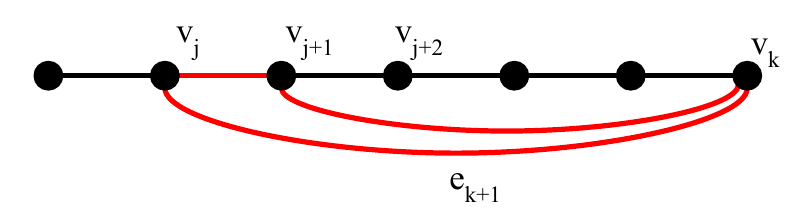}
\caption{The rainbow path formed by starting at some $v$ and proceeding arbitrarily in Lemma \ref{type2}. The color $\alpha$ is represented by the red edges, and we note that no black edges here can take color $\alpha$, and no two black edges may have the same color.}\label{Lemma1a}
\end{figure}

Hence, $G$ contains a rainbow cycle.

Now, let us take $C$ to be any rainbow cycle in $G$. Notice that removing this rainbow cycle preserves properties \eqref{inheritb}-\eqref{inherite} for a good colored graph, and we need only verify property \ref{nox}. To that end, we shall suppose to the contrary that $G\ba C$ has a cut-vertex of Type $X$, say $v$. Let $G_1$ and $G_2$ be the pseudblocks of $G$ at $v$. Note that as $G$ had no cut-vertices of Type $X$, there must have been an additional path between $G_1$ and $G_2$ along $C$ in $G$. Moreover, by Lemma \ref{conn}, as $G\ba C$ is connected, this path must be a single edge between $G_1$ and $G_2$, as there are no vertices of degree 2 in $G$ and no other components (see Figure \ref{Lemma1b}).

\begin{figure}[htp]
\includegraphics[scale=.75]{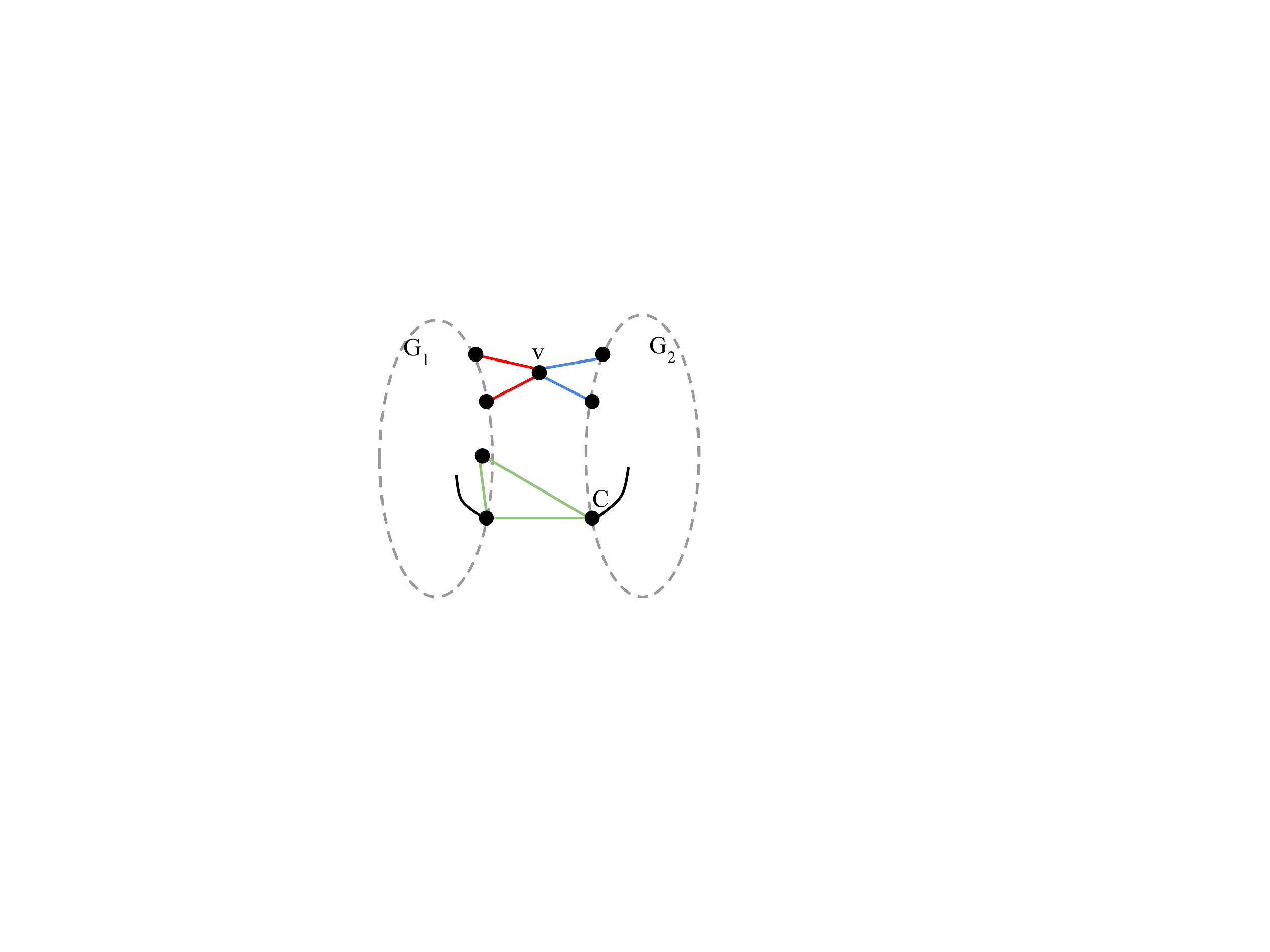}
\caption{An illustration of a potential cut vertex of Type $X$ in $G\ba C$ obtained in Lemma \ref{type2}. We note here that we do not necessarily assume that the vertices from $R$ incident to the edge colored $\alpha$ are all disjoint from the neighbors of $v$; but as noted above, they cannot both be neighbors of $v$.}\label{Lemma1b}
\end{figure}

Let $\alpha$ be the color of this edge. By the same argument as in Lemma \ref{conn}, we must have that these two vertices have a common neighbor in $G$ via edges of color $\alpha$; in Figure \ref{Lemma1b}, this neighbor is shown wolog in $G_1$. But neither of these two edges of color $\alpha$ appear in $C$, hence, vertex $v$ is not a cut vertex of Type $X$, a contradiction.

Therefore, the removal of $C$ from $G$ yields a good colored graph, as desired.

\end{proof}

The following lemma, although somewhat trivial, will in fact be quite useful in the main proof.

\begin{lemma}\label{setcycles}
Let $G$ be a good colored graph, and let $C_1, C_2, \dots, C_k$ be a collection of edge-disjoint rainbow cycles in $G$ such that $G\ba\{C_1, C_2, \dots, C_k\}$ is a good colored graph. Then $G\ba C_1$ is a good colored graph also.
\end{lemma}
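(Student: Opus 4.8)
The statement is that if removing the whole family $C_1,\dots,C_k$ leaves a good colored graph, then removing just $C_1$ already leaves a good colored graph. By Observation \ref{heredity}, properties \eqref{inheritb}--\eqref{inherite} are automatically inherited by $G\ba C_1$, so the only thing to check is property \eqref{nox}: that $G\ba C_1$ has no cut vertex of Type $X$. The plan is to argue by contradiction. Suppose $v$ is a cut vertex of Type $X$ in $G\ba C_1$, with pseudoblocks $H_1, H_2$ at $v$, where $v$ has two edges of one color into $H_1$ and two edges of another color into $H_2$, and (since $G\ba C_1$ is even, hence bridgeless) the two edges into each $H_i$ reach the same component of $H_i\ba\{v\}$.

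The key observation is that $v$ still has degree $4$ in $G\ba C_1$ (it is a Type II vertex, and Type $X$ requires degree $4$), so none of the four edges at $v$ lie on $C_1$; hence these four edges, and their colors at $v$, are exactly the same in $G$, in $G\ba C_1$, and in $G\ba\{C_1,\dots,C_k\}$. Now consider $G' := G\ba\{C_1,\dots,C_k\} = (G\ba C_1)\ba\{C_2,\dots,C_k\}$, which is good by hypothesis, so in particular $v$ is not a cut vertex of Type $X$ in $G'$. Since $G'$ is obtained from $G\ba C_1$ by deleting further rainbow cycles $C_2,\dots,C_k$ (none of which uses any edge at $v$), and since the pseudoblock partition witnessing that $v$ is Type $X$ in $G\ba C_1$ separates $V(G\ba C_1)$ into the two sides $H_1, H_2$ meeting only at $v$, the graph $G'$ is a subgraph of $G\ba C_1$ on the same vertex set with the same edges at $v$. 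Deleting edges cannot merge the two sides, so in $G'$ the vertex $v$ still has two edges into the $H_1$-side and two into the $H_2$-side with the required monochromatic pattern, and $v$ is still a cut vertex of $G'$ (removing $v$ from $G'$ disconnects the $H_1$-part from the $H_2$-part, since already in the larger graph $G\ba C_1$ there were no $v$-avoiding edges between them, and deleting edges only keeps that true). Hence $v$ would be a cut vertex of Type $X$ in $G'$, contradicting goodness of $G'$. This contradiction shows $G\ba C_1$ has no cut vertex of Type $X$, so $G\ba C_1$ is good.

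The one point that needs a little care — and is the main (though minor) obstacle — is the claim that $v$ remaining a cut vertex after deleting more edges is not circular: a priori, deleting edges could destroy the ``Type $X$'' structure only by making $v$ non-cut, not by creating new $v$-avoiding paths. But deleting edges never creates paths, so if all $u$-to-$w$ paths in $G\ba C_1$ pass through $v$ for some $u$ in the $H_1$-side and $w$ in the $H_2$-side, the same holds in $G'$ (provided such $u,w$ still have positive degree; since the two edges from $v$ into each side are present in $G'$, their other endpoints serve as such $u$ and $w$). Thus the pseudoblock decomposition of $G\ba C_1$ at $v$ restricts to a valid pseudoblock decomposition of $G'$ at $v$ with the identical color pattern at $v$, completing the argument.
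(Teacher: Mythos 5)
Your overall strategy is the same as the paper's: reduce to property \eqref{nox} via Observation \ref{heredity}, suppose $v$ is a cut vertex of Type $X$ in $G\ba C_1$, and push that structure down to $G\ba\{C_1,\dots,C_k\}$ to contradict its goodness. The argument is correct except at one point, which happens to be the entire content of the paper's proof: you assert in passing that none of $C_2,\dots,C_k$ "uses any edge at $v$," but your stated justification only covers $C_1$ (namely, that $v$ retains degree $4$ in $G\ba C_1$, so $C_1$ misses $v$). The "hence" carrying the four edges at $v$ unchanged all the way to $G\ba\{C_1,\dots,C_k\}$ does not follow from that.

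The missing step is short but must be said: each $C_j$ with $j\ge 2$ is edge-disjoint from $C_1$, hence is a rainbow cycle lying entirely inside $G\ba C_1$. If $C_j$ passed through $v$, it would use two edges at $v$ of distinct colors (it is rainbow), i.e.\ one edge into $H_1$ and one into $H_2$; the rest of $C_j$ would then be a $v$-avoiding path between $H_1\setminus\{v\}$ and $H_2\setminus\{v\}$, contradicting that $v$ separates the two pseudoblocks of $G\ba C_1$. So no $C_j$ meets $v$, and your argument goes through. This is exactly the observation the paper invokes ("no cut vertex of Type $X$ can be present in any rainbow cycle"), stated just after the definition of Type $X$; with that sentence added, your proof coincides with the paper's.
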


\begin{proof}
Note that we need only prove that $G\ba C_1$ has no cut vertices of Type $X$, by Observation \ref{heredity}. To that end, let us suppose that $v$ is a cut vertex in $G\ba C_1$ of Type $X$. Let $G_1$ and $G_2$ be the pseudoblocks of $G\ba C_1$ at $v$.

Notice that no cycle in $\{C_2, C_3, \dots, C_k\}$ can include the vertex $v$, since as noted above, no cut vertex of Type $X$ can be present in any rainbow cycle. Hence, in $G\ba\{C_1, \dots, C_k\}$, we must have that $v$ is still a cut vertex of Type $X$, as we may take pseudoblocks by removing the cycles $C_2, \dots, C_k$ from $G_1$ and $G_2$. Therefore, $G\ba C_1$ contains no cut vertices of Type $X$.

\end{proof}

Now, in order to proceed with the main proof, we shall first require a slightly modified version of the condition of goodness in a colored graph. This will be necessary, as in some of our cases, we shall have local structure that does not lend itself easily to a modification that forces goodness. As a result, we shall bend condition \eqref{bender} slightly to broaden the class of graphs we consider.
\begin{definition}
Given a connected edge-colored graph $G$, we say that $G$ is an {\it almost-good} colored graph if the following conditions are met.
\begin{enumerate}
\item Every vertex of $G$ has even degree.
\item $G$ has maximum degree 4.
\item Every triangle in $G$ is either rainbow or monochromatic.
\item[(4a)] There is exactly one nonisolated vertex $v$ in $V(G)$ with color degree 1, and moreover this vertex has degree 2. Every other nonisolated vertex has color degree 2.\label{atmostone}
\item[(5)] The subgraph induced by each color class has at most three vertices. \label{colorgraphs}
\item[(6)] $G$ has no cut-vertices of type $X$. 
\end{enumerate}
\end{definition}

Hence, the difference between a good and an almost-good graph is that in an almost-good graph, we permit exactly one vertex to violate the condition that every vertex has color degree exactly 2. Moreover, any vertex violating this condition must have degree 2. We shall refer to this violating vertex as the bad vertex of $G$; the other vertices will still be called Type I and Type II, as with good colored graphs. We note that Observation \ref{heredity} will extend naturally to almost good graphs.

\begin{lemma}\label{rainbowtri}
Let $G$ be a good or almost-good colored graph, and let $C=(v_1, v_2, v_3, v_1)$ be a rainbow triangle in $G$. Then $G\ba C$ is a good or almost-good colored graph, respectively.
\end{lemma}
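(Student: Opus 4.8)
The plan is to reduce to verifying condition \eqref{nox} for $G\ba C$, since by Observation \ref{heredity} (and its stated extension to almost-good graphs) properties \eqref{inheritb}--\eqref{inherite} (respectively (1)--(5) in the almost-good case) are automatically inherited, and in the almost-good case the bad vertex has degree $2$ and a rainbow triangle cannot pass through it in a way that changes its color degree. So suppose for contradiction that $v$ is a cut vertex of Type $X$ in $G\ba C$, with pseudoblocks $G_1, G_2$ at $v$, so $v$ has degree $4$ with two edges of one color into $G_1$ and two edges of another color into $G_2$.

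\begin{proof}
By Observation \ref{heredity}, $G\ba C$ inherits properties \eqref{inheritb}--\eqref{inherite} from $G$; in the almost-good case, the analogous properties (1)--(5) are inherited by the same argument, using additionally that the bad vertex $b$ has degree $2$ and color degree $1$, so removing the (at most two) edges of $C$ at $b$ either leaves $b$ untouched or isolates it, and in either case property (4a) is preserved. Hence it suffices to verify that $G\ba C$ has no cut vertex of Type $X$.

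Suppose to the contrary that $v$ is a cut vertex of Type $X$ in $G\ba C$, and let $G_1, G_2$ be pseudoblocks of $G\ba C$ at $v$, with $v$ having two edges of color $\alpha$ into $G_1$ and two edges of color $\beta$ into $G_2$, where $\alpha\neq\beta$. In particular $v$ has degree $4$ in $G\ba C$, hence also degree $4$ in $G$ (removing a cycle does not change which vertices have a given degree only if $v\notin C$; but if $v\in C$ then $v$ would have degree $\le 2$ in $G\ba C$), so $v$ is a Type II vertex of $G$ and $v\notin V(C)$.

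Since $G$ itself has no cut vertex of Type $X$, the vertex $v$ is \emph{not} a cut vertex of Type $X$ in $G$, yet $v$ has degree $4$ in $G$ with the same four edges, two of color $\alpha$ and two of color $\beta$. The only way this can fail to be a Type X configuration in $G$ is that in $G$ there is a path between $G_1$ and $G_2$ avoiding $v$; such a path must use edges of $C$, since it does not exist in $G\ba C$. Thus $C$ meets both $G_1$ and $G_2$. Walk along $C$: since $C$ is a cycle avoiding $v$ and $C$ visits both $G_1\ba\{v\}$ and $G_2\ba\{v\}$, and the only vertex shared by $G_1$ and $G_2$ is $v$, the cycle $C$ must contain an edge $e$ with one endpoint in $V(G_1)\ba\{v\}$ and the other in $V(G_2)\ba\{v\}$. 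But $C$ is a triangle, so $C$ has only three vertices, and they cannot be split across the two pseudoblocks with no vertex in common other than $v$ while $C$ is connected and avoids $v$: indeed if two vertices of $C$ lie in $G_1\ba\{v\}$ and one in $G_2\ba\{v\}$, then the one vertex in $G_2$ is joined on $C$ to both vertices in $G_1$, giving two edges of $C$ between $V(G_1)\ba\{v\}$ and $V(G_2)\ba\{v\}$; together with the third edge of $C$ (inside $G_1\ba\{v\}$) this is a triangle with two vertices in one pseudoblock and one in the other. Now consider the color of these two crossing edges of $C$: they share the vertex $w\in G_2\ba\{v\}$, so by property \eqref{monotri} together with color degree $2$ at $w$, and since $C$ is rainbow, the two crossing edges have distinct colors, neither equal to the color of the third edge of $C$, and in particular they are incident to $w$, a vertex of the $2$-connected pseudoblock side; this contradicts the fact that the only vertices of $G$ adjacent to both $G_1\ba\{v\}$ and $G_2\ba\{v\}$ other than $v$ would have to be $v$ itself, since $G_1$ and $G_2$ share only $v$. (Symmetrically one rules out one vertex of $C$ in $G_1\ba\{v\}$ and two in $G_2\ba\{v\}$, and it is impossible for all three vertices of $C$ to lie on one side while $C$ still provides a $G_1$--$G_2$ path avoiding $v$.) This contradiction shows $G\ba C$ has no cut vertex of Type $X$, and hence $G\ba C$ is good, respectively almost-good.
\end{proof}

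I expect the main obstacle to be pinning down exactly why a triangle $C$ straddling the two pseudoblocks is impossible: one must argue carefully that the three vertices of $C$ cannot be distributed across $G_1\ba\{v\}$ and $G_2\ba\{v\}$ (which share no vertex) while $C$ remains a connected triangle avoiding $v$ — the key point being that any edge of $C$ crossing between the pseudoblocks would be an edge of $G$ joining the two sides at a vertex other than $v$, contradicting that $v$ is the unique common vertex of the pseudoblocks. Once that is established, the fact that $C$ lies entirely within one pseudoblock means removing it cannot have created the Type X obstruction, since $G$ already had none.
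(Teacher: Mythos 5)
Your reduction to checking condition \eqref{nox}, and the observation that an alleged Type $X$ cut vertex $z$ of $G\ba C$ cannot lie on $C$ (degree count) and hence must already have its degree-$4$, two-color configuration in $G$, so that some vertex of $C$ must lie in each pseudoblock, all match the paper's proof. But the step where you derive the contradiction is wrong. You argue that an edge of $C$ with one endpoint in $V(G_1)\ba\{z\}$ and the other in $V(G_2)\ba\{z\}$ ``contradicts the fact that $G_1$ and $G_2$ share only the vertex $z$.'' That disjointness is a property of the pseudoblock decomposition of $G\ba C$, not of $G$: pseudoblocks share no \emph{edges of $G\ba C$}, and the crossing edges of $C$ are precisely edges that have been deleted, so their presence in $G$ contradicts nothing. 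Indeed, a rainbow triangle genuinely can straddle the two pseudoblocks; the lemma is saved not by forbidding this configuration but by analyzing it. A sanity check that something must be missing: your argument nowhere uses that $C$ is a \emph{triangle} (beyond case enumeration), nor that color classes span at most three vertices, nor the Type I/Type II dichotomy --- yet removing a general rainbow cycle \emph{can} create Type $X$ cut vertices, which is the entire difficulty of the paper.

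The missing idea is the color argument. With $v_1\in V(G_1)$, $v_2\in V(G_2)$ and $\alpha=c(v_1v_2)$: if $v_1$ is of Type II it has a second edge $v_1w$ of color $\alpha$ with $w\neq v_2,v_3$; since $v_1w\notin C$ it survives in $G\ba C$, forcing $w\in V(G_1)$. The color class of $\alpha$ then already contains the three vertices $v_1,v_2,w$, and $v_2\not\sim w$ (such an edge would survive in $G\ba C$ and join the two pseudoblocks), so $v_2$ has only one incident $\alpha$-edge and must be of Type I. Hence $v_2$ is isolated in $G\ba C$ and can be reassigned to $G_1$; doing the same for $v_3$ shows $z$ was already a cut vertex of Type $X$ in $G$, the desired contradiction. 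You should replace your ``crossing edge'' contradiction with this argument (and, in the almost-good preamble, note simply that the bad vertex has both its edges of one color and so cannot lie on a rainbow triangle at all, rather than allowing the case where $C$ isolates it).
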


\begin{proof}
By Observation \ref{heredity}, it is necessary only to check that the removal of the rainbow triangle $C$ does not result in any cut vertices of Type $X$. Clearly, if $G$ is almost good, we cannot have any of the $v_i$ as the bad vertex. Moreover, if every vertex of the triangle is Type I, this is immediate, and hence we may assume that there exists at least one vertex of the triangle that is Type II, say $v_1$.

Let us suppose that $z$ is a cut vertex of Type $X$ in $G\ba C$. Let $G_1$ and $G_2$ be the pseudoblocks of $G\ba C$ at $z$. Note that as $z$ was not a cut vertex of Type $X$ in $G$, we must have that at least one of the vertices of $C$ is in $G_1$, and at least one in $G_2$.

Wolog, suppose that $v_1\in V(G_1)$ and $v_2\in V(G_2)$. Let $\alpha = c(v_1v_2)$. Then as $v_1$ is of Type II, there must exist a vertex $w\neq v_2, v_3$ such that $v_1\sim_G w$ and $c(v_1w)=\alpha$. By the definition of $G_1$ and $G_2$, we must also have $w\in V(G_1)$ (see Figure \ref{Lemma4}). But then $v_2\not\sim_G w$, and hence $v_2$ is of Type $I$ in $G$. Hence, $v_2$ is isolated in $G\ba C$, and thus $v_2$ can be reassigned to $V(G_1)$ to obtain a new psuedoblock decomposition of $G\ba C$ at $z$. Similarly, if $v_3\in V(G_2)$, then $v_3$ is also of Type I in $G$ and can be reassigned to $V(G_1)$. But then $z$ is a cut vertex of Type $X$ in $G$, also, a contradiction.

\begin{figure}[htp]
\includegraphics[scale=.8]{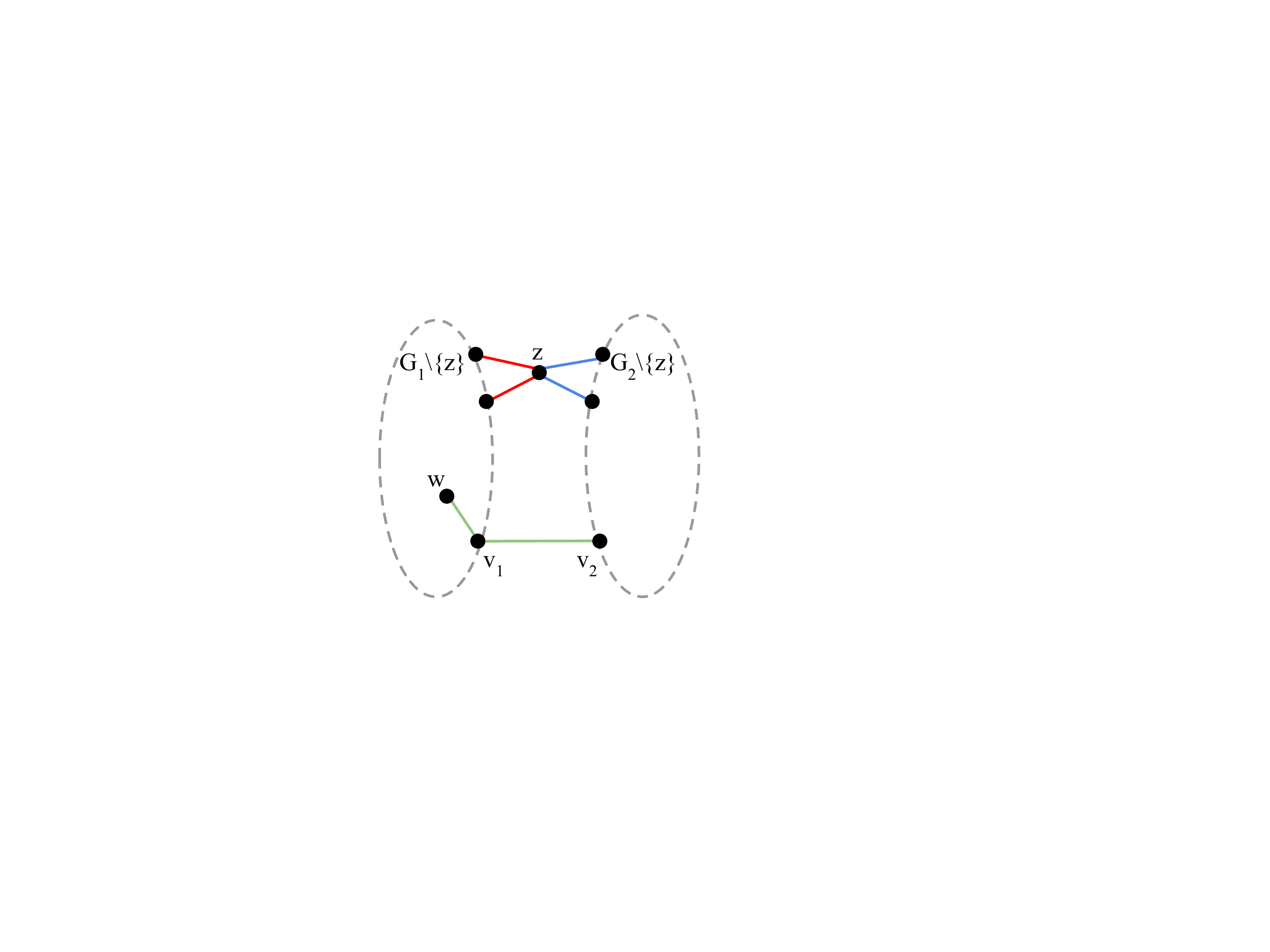}
\caption{Structure of $G$ in Lemma \ref{rainbowtri} in the case that $z$ is a cut vertex of Type $X$.}\label{Lemma4}
\end{figure}

Therefore, the removal of a rainbow triangle cannot produce any cut vertices of Type $X$, and hence $G\ba C$ is good or almost-good, respectively.

\end{proof}

Our primary goal is to show that given a good colored graph, we can always find a rainbow cycle decomposition. In order to do so, we actually prove a stronger result that holds for both good and almost good colored graphs. This stronger version is required, as the proof will be by induction, and the inductive step will require reduction to a smaller graph. Such a reduction may, in some cases, yield an almost-good graph, instead of a good graph, and hence we shall include almost-good graphs in our key thorem.

In order to state the main theorem, we must first consider what a rainbow-like cycle decomposition should look like in an almost-good graph, since certainly there can be no actual rainbow cycle decomposition. However, since only one vertex fails to have color degree 2, we could (and will) decompose the edges into cycles so that only one cycle fails to be rainbow. Moreover, we shall ensure that this cycle is as close to rainbow as possible. Specifically, we shall use the following definition.

\begin{definition}
Let $G$ be an edge colored graph. We call a cycle $C=(v_0, v_1, v_2, \dots, v_k, v_0)$ an {\it almost-rainbow} cycle if $c(v_0v_1)=c(v_kv_0)$, but the path $v_0, v_1, \dots, v_k$ is rainbow.
\end{definition}

That is to say, a cycle is almost-rainbow if it uses $k$ colors for $k+1$ edges, and the repeated color appears on two consecutive edges.

\begin{theorem}\label{T:mainthm}
Let $G$ be a graph that is either good or almost-good. Then there is a decomposition of the edges of $G$ into cycles $\{C_1, C_2, \dots, C_r\}$ such that one of the following is true:
\begin{enumerate}
\item If $G$ is good, then $C_i$ is rainbow for all $i$. \label{firsttype}
\item If $G$ is almost good, then $C_i$ is rainbow for all $i\geq 2$, and $C_1$ is almost-rainbow. \label{secondtype}
\end{enumerate}
\end{theorem}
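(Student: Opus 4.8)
The plan is to prove Theorem~\ref{T:mainthm} by strong induction on $|E(G)|$, the number of edges. The base case $|E(G)|=0$ is immediate, the empty list being the desired decomposition. For the inductive step I would first reduce to the connected case: a disconnected good graph is a disjoint union of good graphs, and a disconnected candidate of the second type has its bad vertex in one component (which is almost-good) with all other components good, so decompositions of the components assemble --- keeping at most one almost-rainbow cycle --- into a decomposition of $G$ of the required form. Hence assume $G$ is connected.

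Next I would dispose of two easy cases. If $G$ contains a rainbow triangle $T$, then $T$ is itself a rainbow cycle; by Lemma~\ref{rainbowtri}, $G\ba T$ is again good, respectively almost-good with the same bad vertex (the bad vertex has two incident edges of one color, so it cannot lie on a rainbow triangle), and it has fewer edges, so the inductive hypothesis applied to $G\ba T$ together with $T$ finishes this case. If instead $G$ is good and every nonisolated vertex has Type~II, then Lemma~\ref{type2} provides a rainbow cycle $C$ for which $G\ba C$ is good, and again induction finishes. So from now on we may assume $G$ is connected, has no rainbow triangle, and either (when good) contains a Type~I vertex or (when almost-good) has a bad vertex.

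The heart of the proof is this remaining case. Fix a vertex $v$ of degree $2$ that is Type~I or the bad vertex, with neighbors $a,b$ (possibly equal). The strategy is to produce a strictly smaller good or almost-good graph $G'$ by a local operation at $v$ --- deleting $v$ and either identifying $a$ with $b$, or inserting a suitably colored edge between them, or, when colors or the $K_2$/$P_2$/$K_3$ structure of the color classes at $a$ and $b$ obstruct that, contracting or deleting a nearby edge and rerouting --- the choice dictated by a case analysis on whether $a=b$, on the two colors at $v$, on whether $a\sim b$, and on the color classes meeting $a$ and $b$. Applying the inductive hypothesis to $G'$ gives a decomposition of $G'$ of the stated form, which I then lift back to $G$ by reversing the local operation (re-subdividing an edge, splicing $v$ back onto the unique cycle through the relevant edge, un-contracting, and adjoining any rainbow cycle that was set aside). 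When $v$ is the bad vertex, the unique cycle through $v$ must use its two equal-colored edges, so the lift yields exactly one almost-rainbow cycle and the rest rainbow, matching conclusion~\eqref{secondtype}; when $v$ is Type~I every lifted cycle is rainbow, matching~\eqref{firsttype}. Throughout, Observation~\ref{heredity} and Lemma~\ref{setcycles} keep conditions \eqref{inheritb}--\eqref{inherite} and let cycles be peeled one at a time.

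The main obstacle, exactly as the introduction warns, is ensuring that the reduced graph $G'$ contains no cut vertex of Type~$X$: this is the one goodness condition not granted for free by Observation~\ref{heredity}, and a careless local operation can genuinely create it. Controlling it is where Observations~\ref{contraction} and~\ref{subdivide} are used to track cut vertices across the contraction and subdivision steps, and where evenness enters decisively --- since an even graph has no bridge, the two edges at $v$ are never bridges, and in any putative Type~$X$ vertex the two same-colored edges into a pseudoblock must reach a common component of it. These constraints should pin down the configurations in which the naive operation would introduce a Type~$X$ vertex to a short list, and in each such configuration one argues either that it cannot occur in a good graph, or that some alternative rainbow (or almost-rainbow) cycle can be removed first. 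Carrying out this sub-case analysis, and checking that the lifted cycles genuinely partition $E(G)$ with the stated rainbow/almost-rainbow pattern, is the bulk of the work.
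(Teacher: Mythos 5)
Your outline follows the same overall strategy as the paper's proof --- induction, disposal of rainbow triangles via Lemma~\ref{rainbowtri} and of all-Type-II graphs via Lemma~\ref{type2}, then a local reduction at a degree-2 vertex with the Type~$X$ obstruction as the crux --- but it stops exactly where the actual proof begins. The sentence ``these constraints should pin down the configurations in which the naive operation would introduce a Type~$X$ vertex to a short list'' is a statement of hope, not an argument, and the list is neither produced nor shown to be short. In the paper this deferred step occupies the entire Subcase~2.2 analysis: one must distinguish whether the longest singular path has length $\geq 3$ or exactly $2$, and in the latter case the reduction is not ``delete $v$ and identify or join its neighbors'' but a more delicate surgery that rewires $v$ to the Type~I vertices $y_1,y_2$ while merging the two Type~II neighbors $x_1,x_2$ into one vertex $x$, followed by separate treatments of all the ways the neighborhoods $\{y_1,w_1,z_1\}$ and $\{y_2,w_2,z_2\}$ can overlap.

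Two specific missing ideas are worth naming. First, your framework tacitly assumes the reduced graph $G'$ can always be made good or almost-good; the paper's hardest case (Subcase~2.2.1(b)) is precisely when $G'$ unavoidably \emph{does} contain a cut vertex of Type~$X$. Handling it requires the $X$-block decomposition, the claim that the $X$-block forest is a path with $x$ and $v$ in its two end-blocks, induction applied to an end-block that is only \emph{almost}-good, and a final exchange argument replacing the chosen cycle $C_2$ by the other cycle $C_j$ through $x$ when the first choice leaves a Type~$X$ vertex behind. None of this is recoverable from Observations~\ref{contraction} and~\ref{subdivide} alone. Second, even when $G'$ is good, its inductively obtained decomposition may contain no cycle avoiding both $v$ and $x$, in which case one cannot lift a single cycle at all and must instead reassemble the two or three cycles through $v$ and $x$ into a different rainbow decomposition of the local picture by hand (the paper's $k=2$ and $k=3$ analysis, including a direct 2-connectivity argument for $G\ba C$). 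Your proposal is a correct map of the terrain, but the theorem is not proved until these configurations are enumerated and resolved.
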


We note that as line graphs of cubic graphs are always good, combining Theorem \ref{T:mainthm} with Lemma \ref{equiv} immediately yields a proof of the CDCC. Hence, in order to resolve the CDCC, it remains only to prove Theorem \ref{T:mainthm}.

\section{Proof of Theorem \ref{T:mainthm}}\label{S:proof}

Our proof shall be done in cases. Before we begin, we first define a singular path; the presence of such a path will be one of the cases on which the proof relies.

\begin{definition}
Let $G$ be a good or almost-good graph. We call a path $v_0, v_1, v_2, \dots, v_k$ in $G$ a {\it singular path of length $k$} if the vertices $v_1, v_2, \dots, v_{k-1}$ are all of Type I. 
\end{definition}

We note that the length of a singular path is the number of edges involved, not the number of vertices. Moreover, any cycle that includes one edge of the singular path must include all edges of the singular path.

\begin{proof}[Proof of Theorem \ref{T:mainthm}]
Let $G$ be a good or almost-good colored graph with order $n$ and size $m$. We shall work by induction, first on $n$ and then on $m$. We shall assume throughout that $G$ is connected, as if not, we may simply choose one connected component of $G$ to work with.

Note that the minimum case will be when $n=4$, and $G$ is a $C_4$ having either three or four colors, such that if a color is repeated, the repetition appears at a single vertex. This is clearly a cycle satisfying one of the above conditions.

Indeed, this base case extends to any $n$ with the minimal number of edges $n$, as in these cases, the graph is a single cycle, which automatically satisfy condition \eqref{firsttype} if $G$ is good, and \eqref{secondtype} if $G$ is almost-good.

Now, suppose that $G$ is either a good or almost-good colored graph, and suppose further that any good or almost-good colored graph on either fewer vertices or edges has a cycle decomposition satisfying one of the two conditions. Note that by induction, it is sufficient to show that $G$ contains either a rainbow cycle $C$, or, if $G$ is almost good, an almost-rainbow cycle $C$ using the bad vertex, such that $G\ba C$ is also either good or almost-good (appropriately). We may then remove this cycle and use induction to decompose the remainder of the graph into cycles that satisfy the conditions. Note further that by Lemma \ref{rainbowtri}, we may assume that $G$ has no rainbow triangles, as if it does, we may immediately remove one such, and apply induction to the remainder of the graph. We shall consider two cases, according to whether $G$ is good or almost-good.

\begin{case}$G$ is almost-good.\label{C:almost}
\end{case}

 Let $v$ be the bad vertex of $G$, let $\alpha$ be its incident color, and let $x_1, x_2$ be its neighbors. Note that by property \eqref{colorgraphs}, there may be only one other edge in $G$ with color $\alpha$, namely $x_1x_2$. Moreover, by the restriction against triangles having exactly two edges with the same color, if $x_1x_2\in E$, it must take color $\alpha$. We shall split into two subcases according to whether this edge is present.

\begin{subcase} $x_1x_2\in E$. \label{C:almost_tri}
\end{subcase}

Note that there are no edges of color $\alpha$ anywhere else in the graph $G$. Moreover, $x_1$ and $x_2$ are both Type II vertices. Let $x_1$ have neighbors $y_1, w_1$, with $c(x_1y_1)=c(x_1w_1)=\beta$, and let $x_2$ have neighbors $y_2, w_2$, with $c(x_2y_2)=c(x_2w_2)=\gamma$. Notice that we must have $\{y_1, w_1\}$ disjoint from $\{y_2, w_2\}$, as otherwise we have a rainbow triangle in $G$. Create a new graph $G'$ by contracting $G$ along the triangle $(x_1, v, x_2, x_1)$; note that as $\{y_1, w_1\}$ is disjoint from $\{y_2, w_2\}$, this cannot create any multiple edges. Let $x=[x_1]$ in the contraction; by abuse of notation, we shall refer to any other vertex by its label in $G$. This contraction is shown in Figure \ref{F:Case1.1}.

\begin{figure}[htp]
\includegraphics[scale=.7]{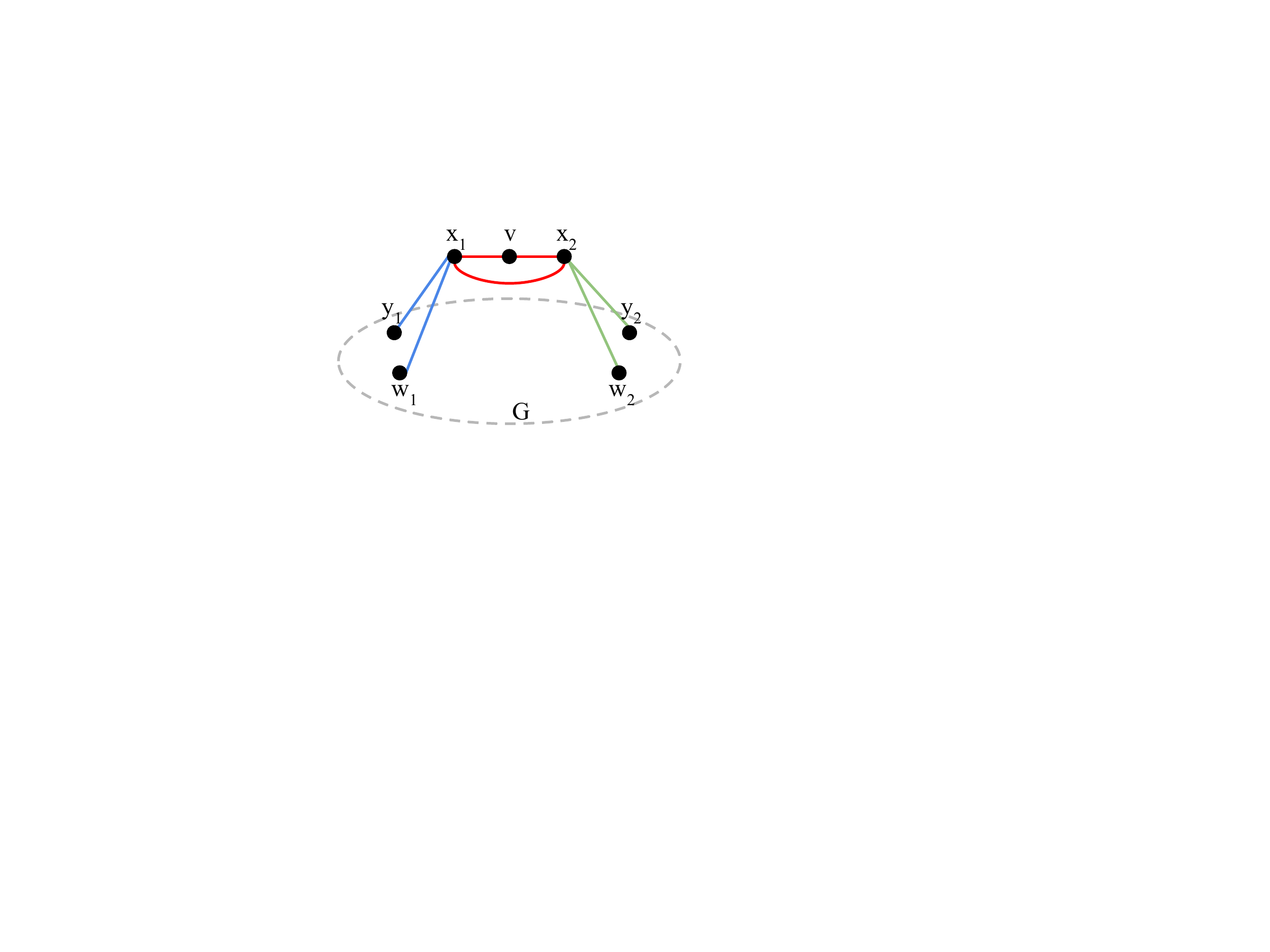}
\hspace{.3in}
\includegraphics[scale=.7]{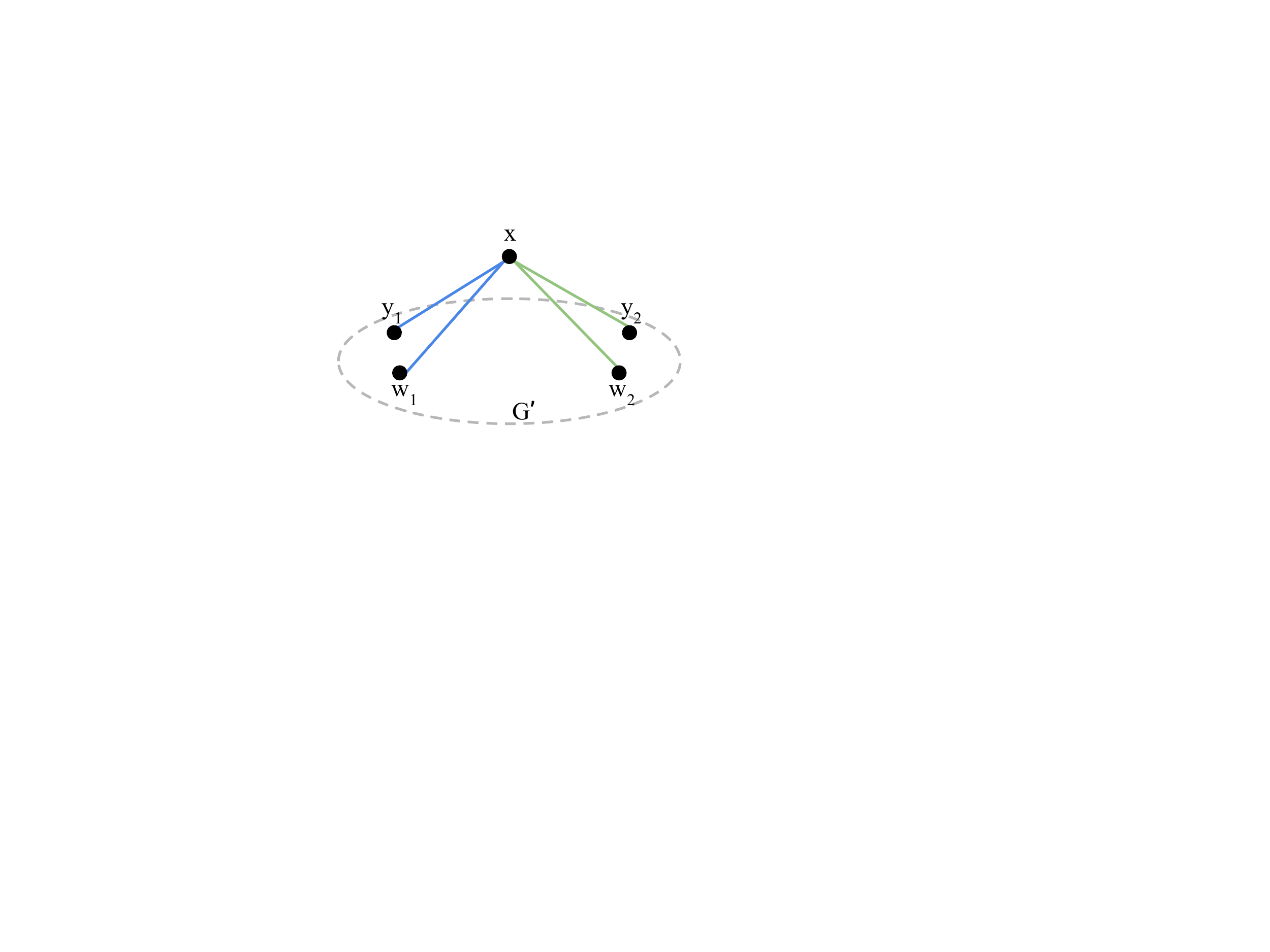}
\caption{An illustration of the construction of $G'$ from $G$ in Subcase \ref{C:almost_tri}, by contracting the triangle $x_1vx_2$. Here, we use red for color $\alpha$, blue for color $\beta$, and green for color $\gamma$.}
\label{F:Case1.1}
\end{figure}

Note that $G'$ has $n-2$ vertices and $m-3$ edges. Also, since $x_1$ and $x_2$ are both not cut vertices in $G$, clearly $x$ is not a cut vertex in $G'$, and hence $G'$ has no cut vertex of Type $X$. Moreover, provided we have not formed a nonmonochromatic, nonrainbow triangle, the resulting graph is good. Note that we can only form a nonmonochromatic triangle in the event that there is an edge between the sets $\{y_1, w_1\}$ and $\{y_2, w_2\}$. Moreover, if such an edge exists, it cannot take color $\alpha, \beta, $ or $\gamma$, as there would be too many vertices having incident edges in these color classes. Hence, we cannot form a triangle having exactly two edges of the same color. Thus, $G'$ is good, and by the inductive hypothesis, there exists a decomposition $\mathcal{C}$ of the edges of $G'$ into rainbow cycles. Let $C'\in \mathcal{C}$ be such that $C'$ includes the vertex $x$. Note that $G'\ba C'$ is a good colored graph, by Lemma \ref{setcycles}.

Form a cycle $C$ in $G$ by replacing the vertex $x$ in $C'$ with the path $x_1vx_2$ or $x_2vx_1$, appropriately. Note then that $G\ba C$ can be obtained from $G'\ba C'$ by subdividing the edge $xy_1$ (wolog), and recoloring appropriately. Hence, since $G'\ba C'$ is good by induction, we have that $G\ba C$ is good by Observation \ref{subdivide}. This construction is illustrated in Figure \ref{F:Case1.1C}.

\begin{figure}[htp]
\includegraphics[scale=.7]{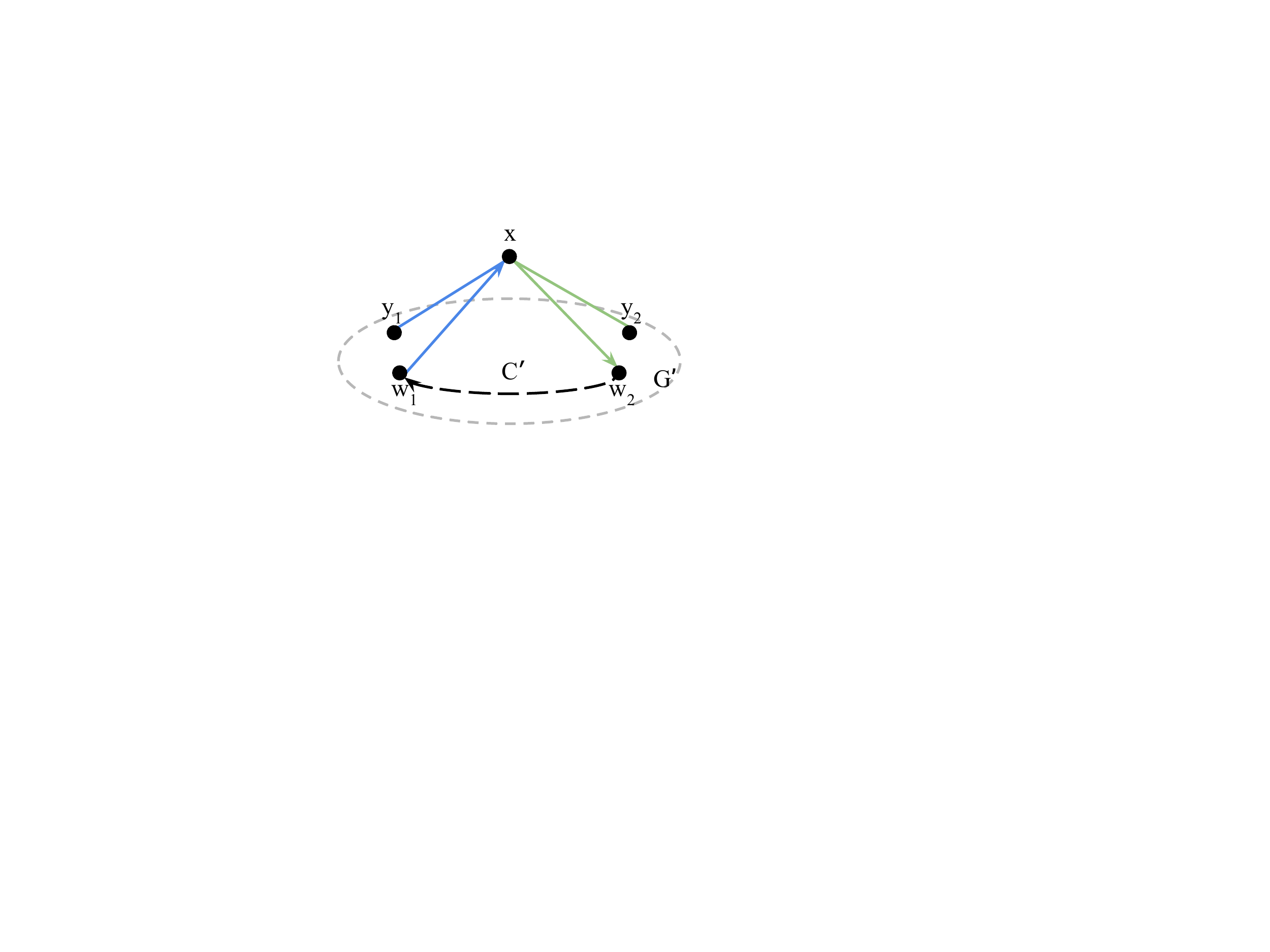}
\hspace{.3in}
\includegraphics[scale=.7]{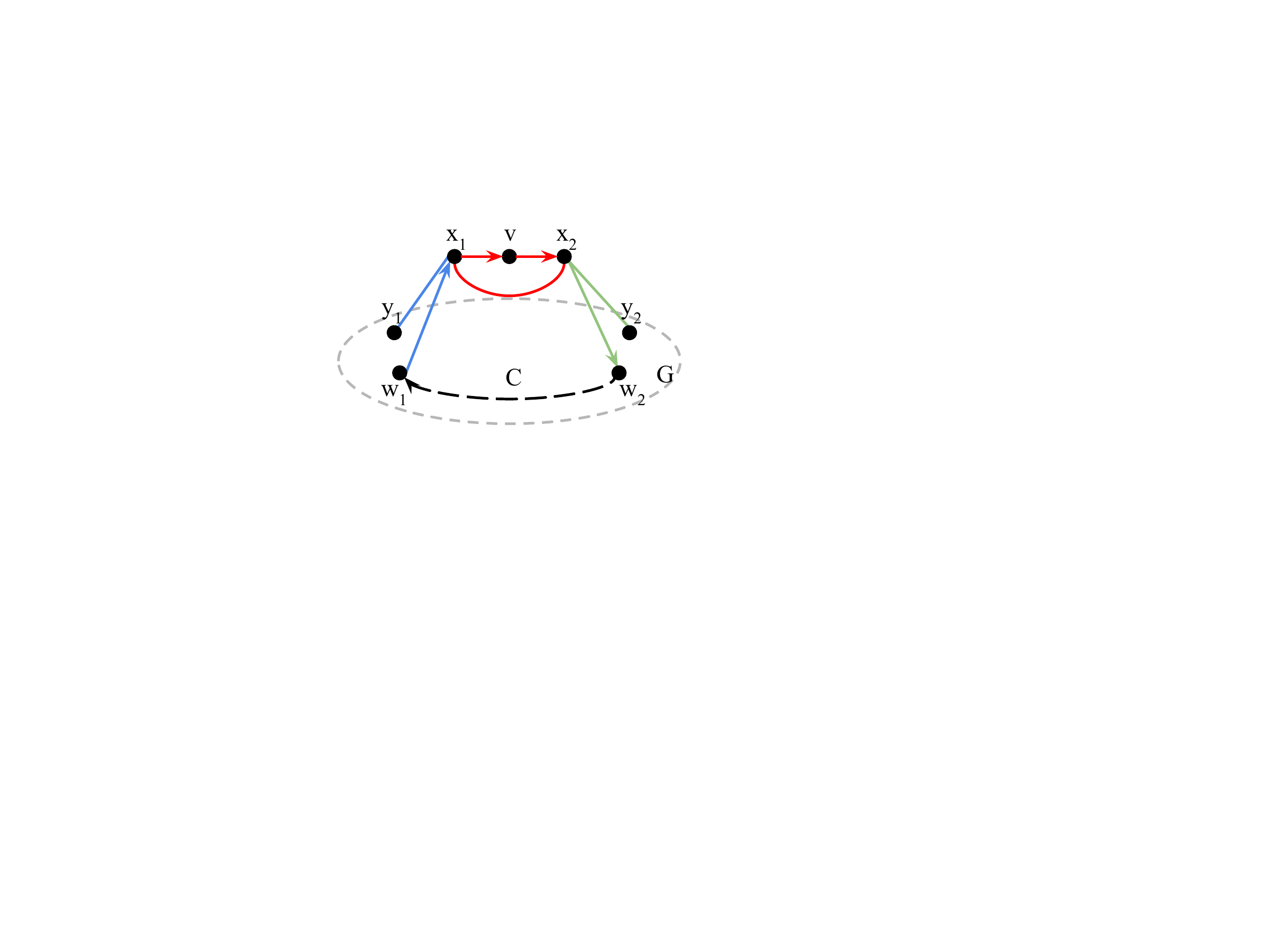}
\caption{The construction of $C$ from $C'$ in Subcase \ref{C:almost_tri}, in the case that the cycle $C'$ uses the vertex $x$. Here, we indicate the cycles $C$ and $C'$ using arrows on the associated edges. Between two vertices, we draw a dashed line to indicate a path (rather than an edge); here we see a dashed line indication the portion of the cycles $C$ and $C'$ that does not involve the vertices pictured.
}\label{F:Case1.1C}
\end{figure}

Therefore, $G\ba C$ is almost-good if $C$ does not use the path $x_1vx_2$, or good if it does, as desired.

\begin{subcase}$x_1x_2\notin E$\label{C:almost_notri}
\end{subcase}

In this case, we must have that both $x_1$ and $x_2$ are vertices of Type I.  Let $y_1$ be the neighbor of $x_1$ other than $v$, and $y_2$ the neighbor of $x_2$ other than $v$. Let $\beta = c(y_1x_1)$ and $\gamma = c(y_2x_2)$. Then we have a singular path of length $4$ given by $y_1, x_1, v, x_2, y_2$. Let $G'$ be the graph obtained from $G$ by contracting the edge $vx_1$, and (by abuse of notation) labeling the resulting node $x_1$. This is illustrated in Figure \ref{F:Case1.2}. 

The resulting graph $G'$ is clearly good, and moreover, $G'$ has $n-1$ vertices. Hence, by induction, there exists a decomposition $\mathcal{C}$ of the edges of $G'$ into rainbow cycles. Let $C'\in \mathcal{C}$ with the edge $x_1x_2$ appearing on $C'$, and note that $G'\ba C'$ is good. Form a cycle $C$ in $G$ by subdividing this edge in $C'$ by $v$. Note that this is an almost-rainbow cycle, and $G\ba C=G'\ba C'$, a good colored graph.

\begin{figure}[htp]
\includegraphics[scale=.8]{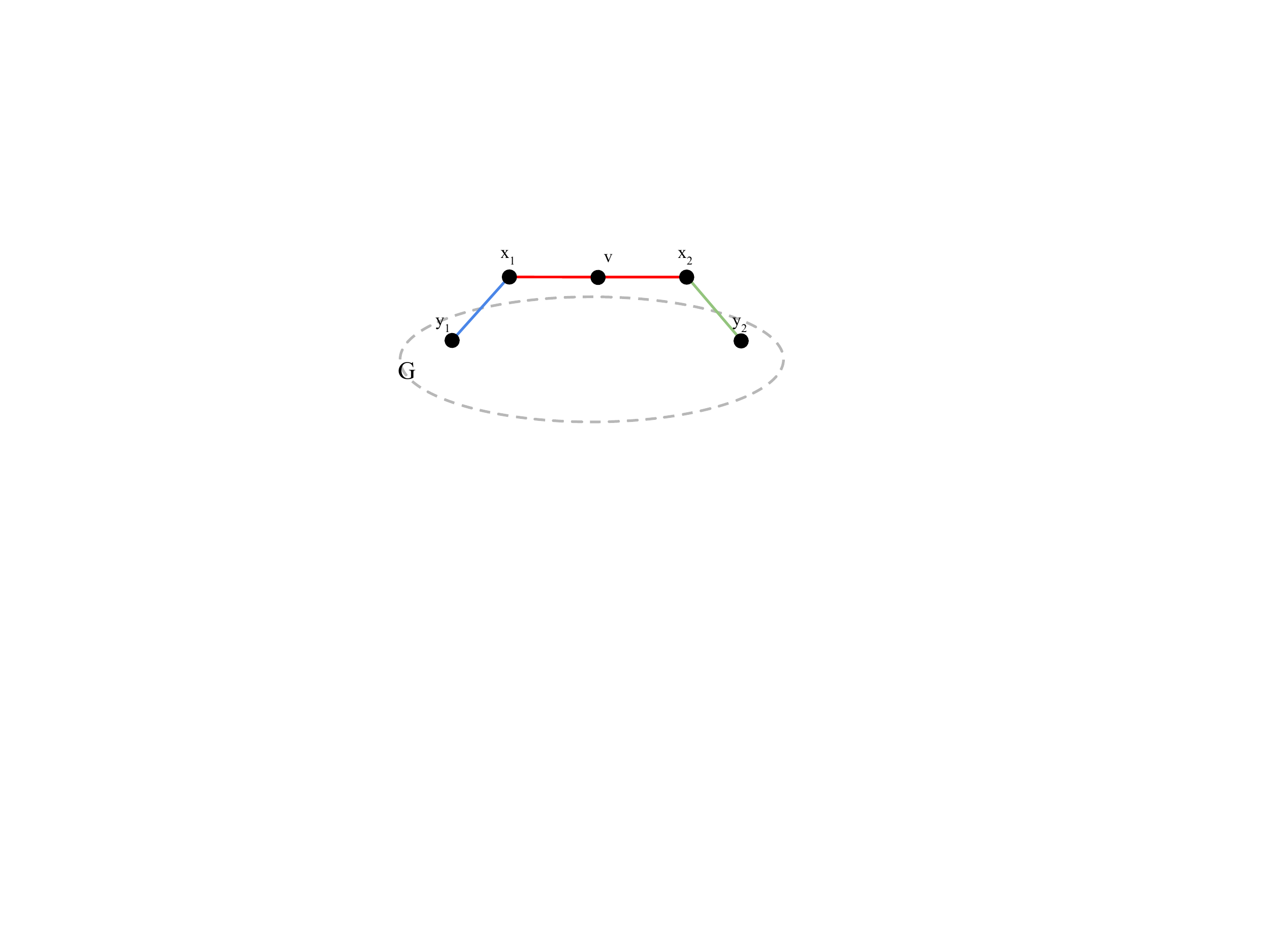}
\hspace{.3in}
\includegraphics[scale=.8]{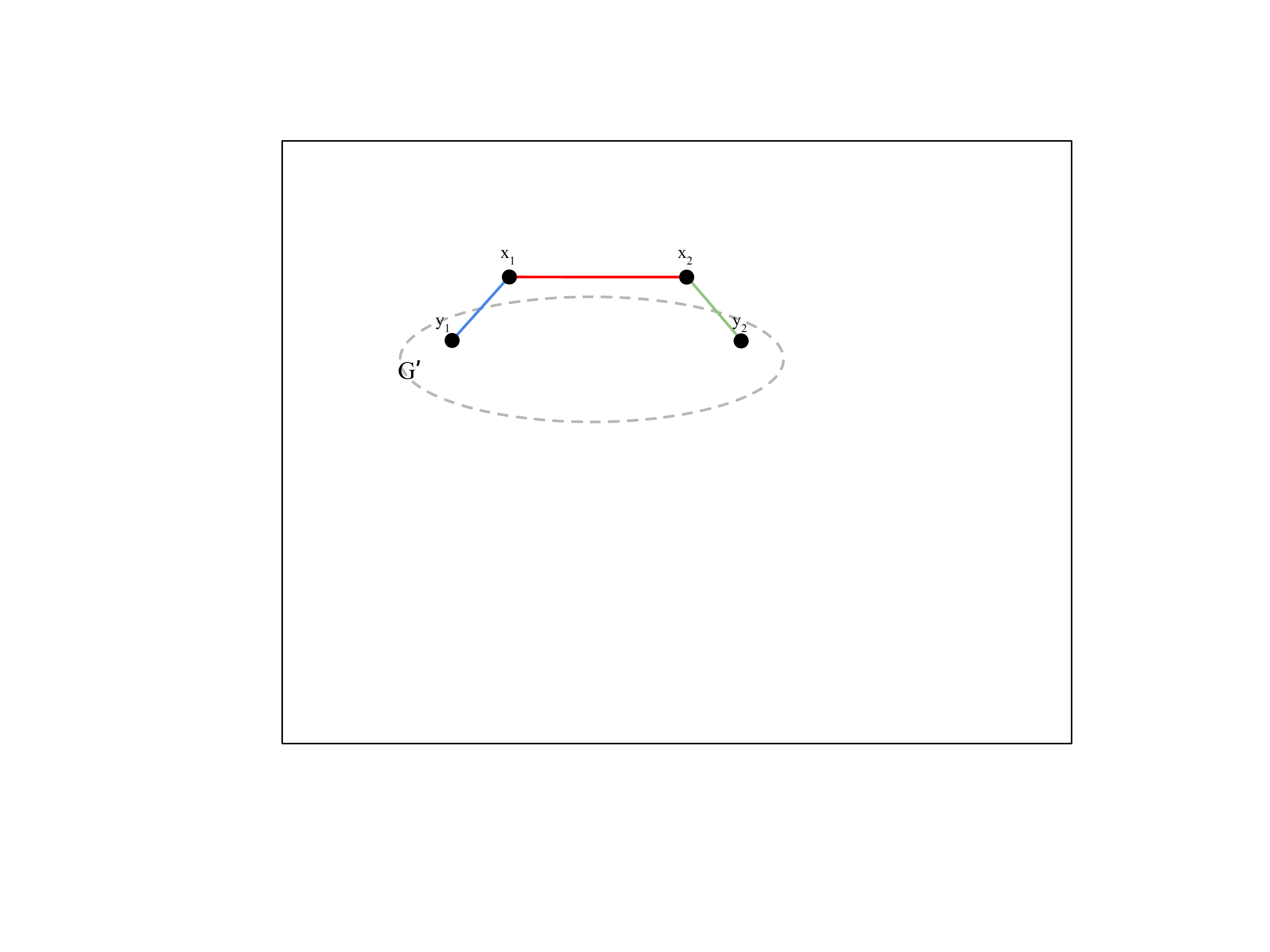}
\caption{The graph $G$ and corresponding contraction to $G'$ for Subcase \ref{C:almost_notri}. As above, red edges indicate color $\alpha$, blue indicate color $\beta$, and green indicate color $\gamma$.}\label{F:Case1.2}
\end{figure}

\begin{case}$G$ is good.\end{case}

We note that by Lemma \ref{type2}, if $G$ consists entirely of vertices of Type II, then we are done. Hence, we may suppose that $G$ has at least one vertex of Type I. We consider two cases, according to the length of the longest singular path in $G$.

\begin{subcase}The longest singular path in $G$ has length at least 3.\label{C:good_singular}
\end{subcase}
Let $P=v_0, v_1, v_2, v_3$ be a singular path in $G$, so that $v_1$ and $v_2$ are both Type I vertices. Let $\alpha = c(v_1v_2)$. Note that no other edge in $G$ may have color $\alpha$, as if so, it would be incident to one of $v_1$ or $v_2$, and thus one of these vertices would be of Type II. 

Form a new graph $G'$ from $G$ by contracting the edge $v_1v_2$; label the new vertex formed (by abuse of notation) as $v_1$, and color the new edge $v_1v_3$ with $c(v_2v_3)$. This contraction is illustrated in Figure \ref{F:Case2.1}.

Clearly, $G'$ is a good colored graph, and moreover, $G'$ has order $n-1$. Hence, by the inductive hypothesis, there exists a decomposition $\mathcal{C}$ of the edges of $G'$ into rainbow cycles. Let $C'\in\mathcal{C}$ be a rainbow cycle that includes the edge $v_1v_3$, and note that $G'\ba C'$ is good. Create a rainbow cycle $C$ in $G$ by subdividing this edge with $v_2$, and recoloring as in $G$. Note that $G\ba C = G'\ba C' \cup\{v_2\}$, where $v_2$ is an isolated vertex, so $G\ba C$ is good. Moreover, no edge in $C'$ can be colored $\alpha$, and hence $C$ is also rainbow.

\begin{figure}[htp]
\includegraphics[scale=.8]{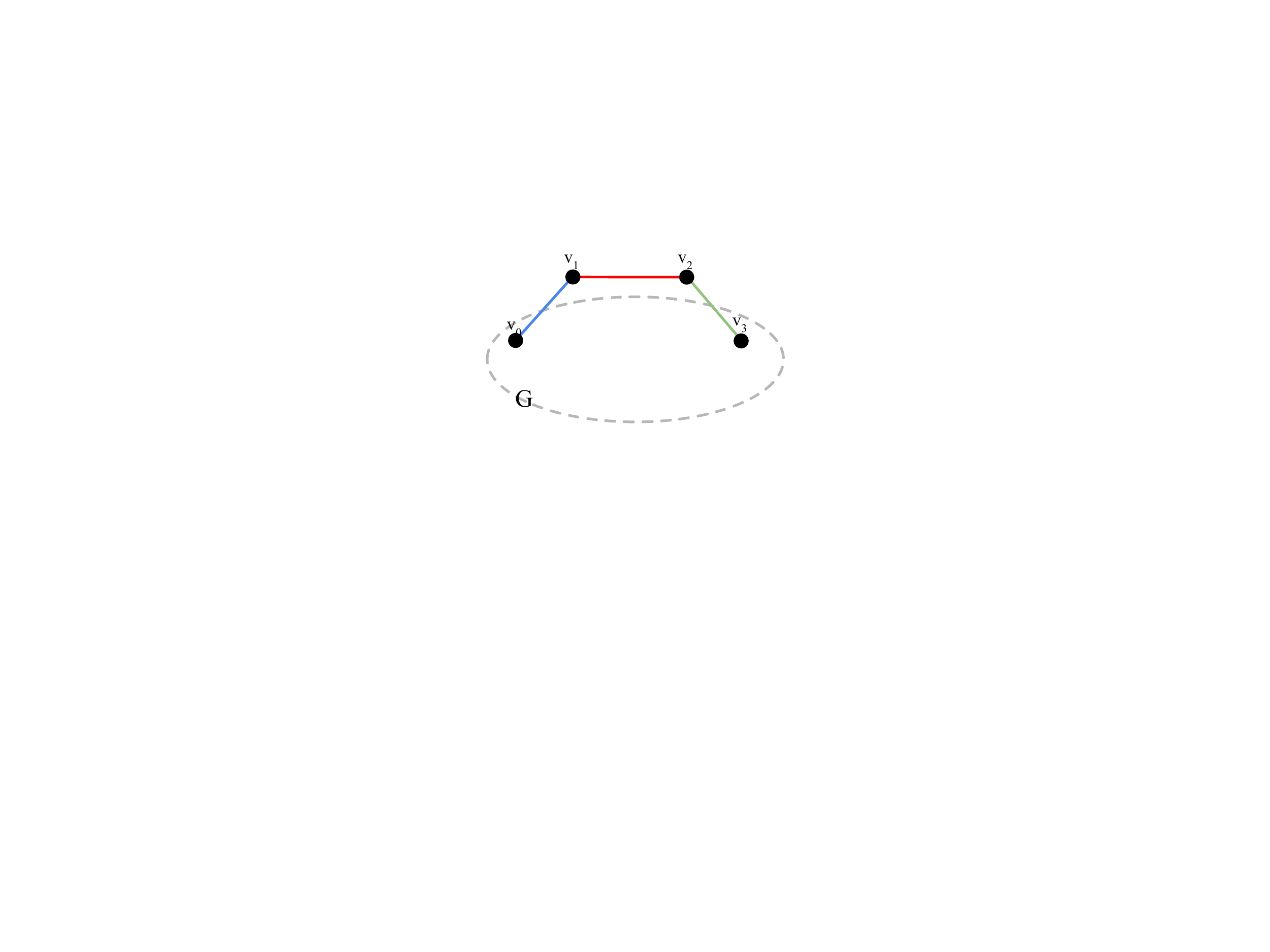}
\hspace{.3in}
\includegraphics[scale=.8]{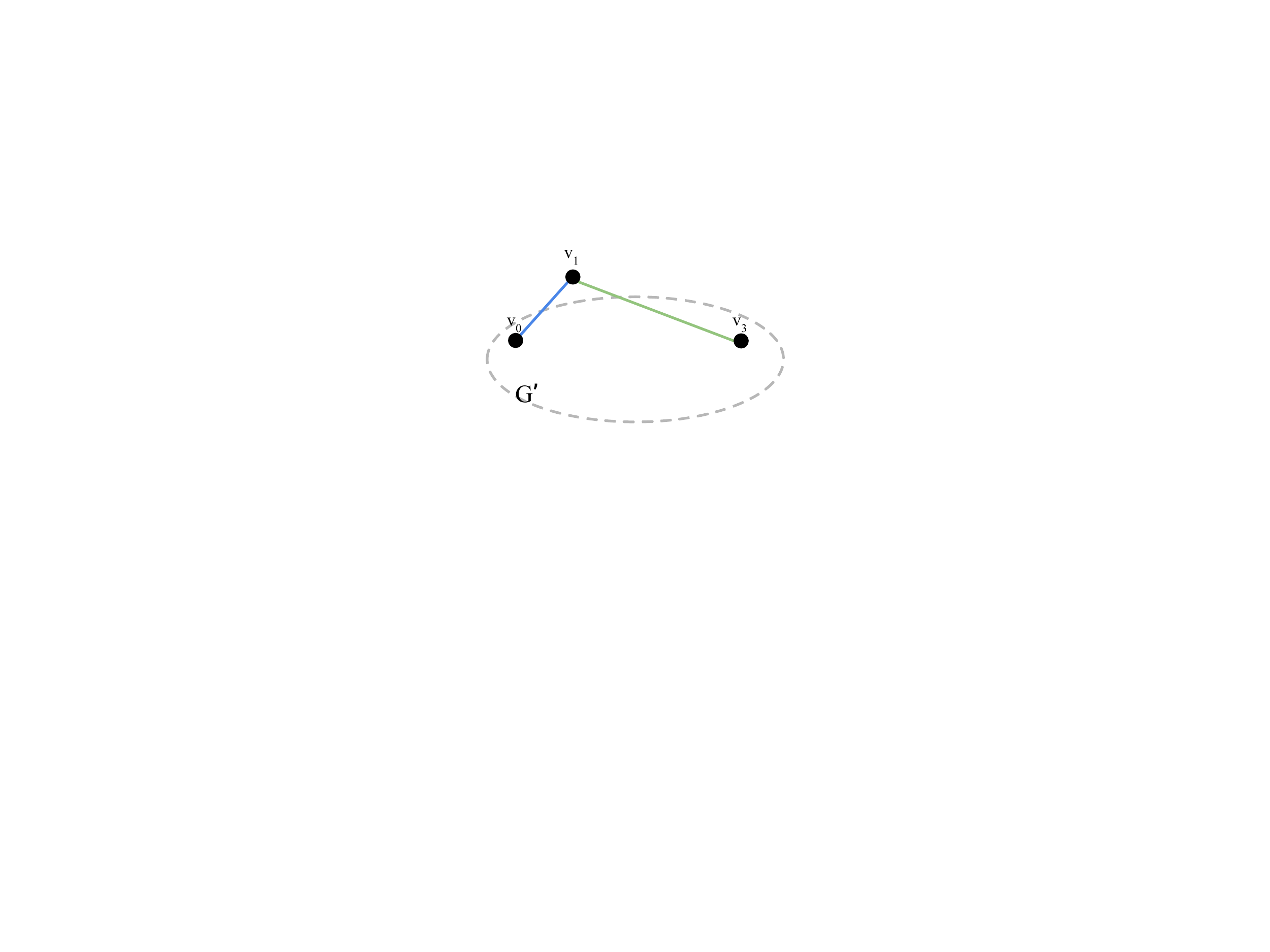}
\caption{The graph $G$ and corresponding transformation to $G'$ for Subcase \ref{C:good_singular}. As above, red edges indicate color $\alpha$.}\label{F:Case2.1}
\end{figure}

\begin{subcase}The longest singular path in $G$ has length $2$.\label{C:nonsingular}
\end{subcase}

As we know that $G$ contains at least one Type I vertex, let $v$ be a Type I vertex in $G$, with neighbors $x_1$ and $x_2$, and $\alpha = c(x_1v)$, $\beta = c(vx_2)$. Note that as $G$ contains no singular path of length 3, we must have that $x_1$ and $x_2$ are both of Type II.

Let $y_1$, $w_1$, and $z_1$ be the neighbors of $x_1$ other than $v$, such that $c(x_1y_1)=\alpha$, and $c(x_1w_1)=c(x_1z_1)=\gamma$. Likewise, let $y_2, w_2,$ and $z_2$ be the neighbors of $x_2$ other than $v$, such that $c(x_2y_2)=\beta$, and $c(x_2w_2)=c(x_2z_2)=\delta$. This basic structure is shown in Figure \ref{F:2.2setup}.

\begin{figure}[htp]
\includegraphics[scale=.8]{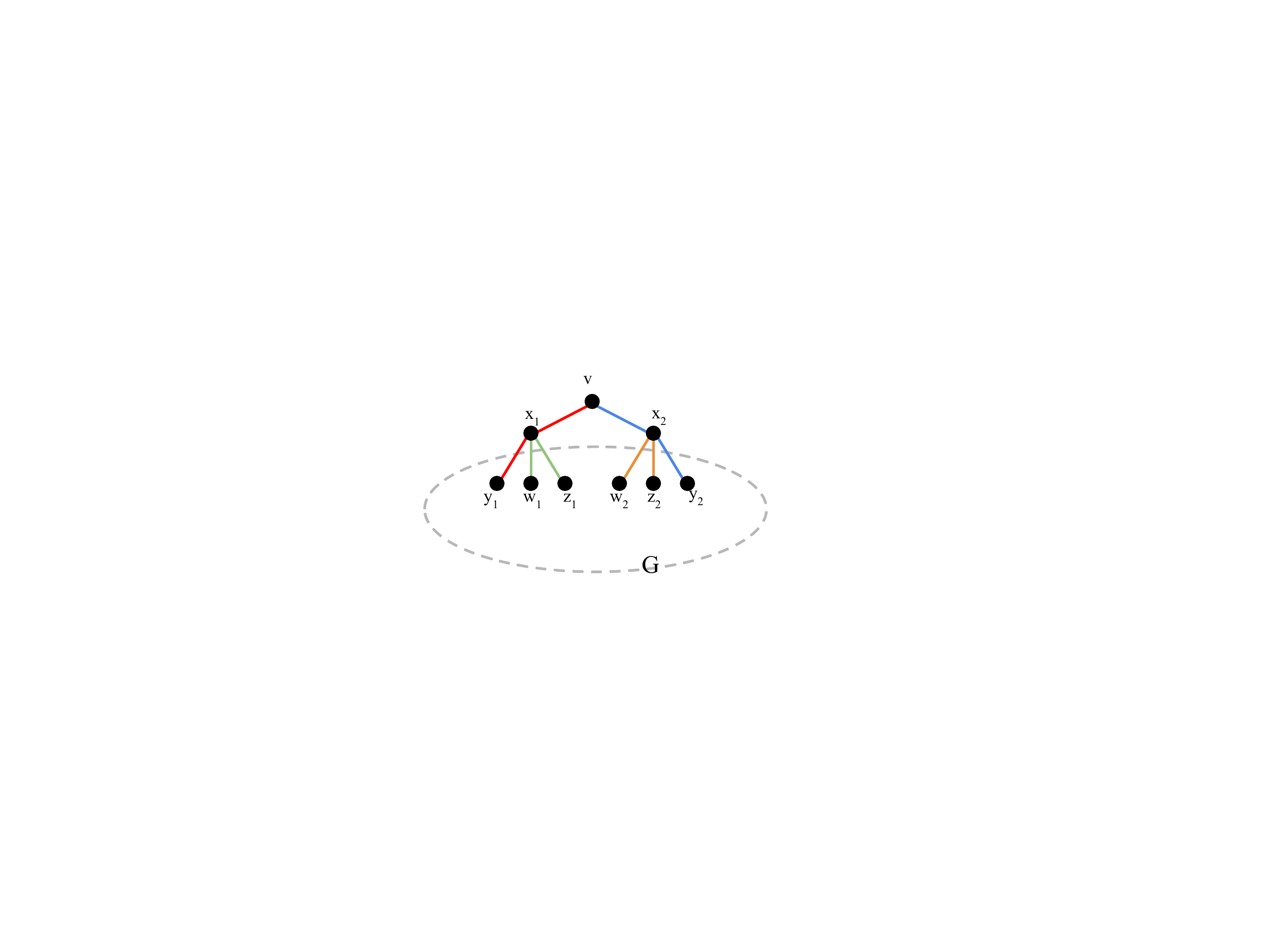}
\caption{We here illustrate the basic structure for all Subcases under Subcase \ref{C:nonsingular}. Throughout this subcase, we use color red for $\alpha$, blue for $\beta$, green for $\gamma$, and orange for $\delta$. We note that this particular drawing depicts vertices $\{y_1, w_1, z_1\}$ as disjoint from $\{y_2, w_2, z_2\}$; although that may not be the case, as in Subcase \ref{C:overlap}, it is sufficient for this illustration of the fundamental structure. Note that both $y_1$ and $y_2$ are vertices of Type I.}\label{F:2.2setup}
\end{figure}

Moreover, as the edges $y_1v$ and $y_2v$ are not present in $G$, we must have that $y_1$ and $y_2$ are both vertices of Type I. We shall consider several cases, depending on whether $\{y_1, w_1, z_1\}$ is disjoint from $\{y_2, w_2, z_2\}$. We first consider the case that the two sets are disjoint. 

\begin{sscase}The sets $\{y_1, w_1, z_1\}$ and $\{y_2, w_2, z_2\}$ are disjoint.
\end{sscase}

Form a new graph $G'$ from $G$ as follows.

\begin{itemize}
\item Remove edges $y_1x_1$, $x_1v$, $vx_2$, and $x_2y_2$ from $G$.
\item Add edges $y_1v$ and $vy_2$, colored $\alpha$ and $\beta$, respectively.
\item Merge vertices $x_1$ and $x_2$ into a new vertex, $x$, having neighbors $w_1, z_1, w_2, z_2$.
\end{itemize}
This construction is illustrated in Figure \ref{F:2.2.1Gprime}. Note that $G'$ has $n-1$ vertices. Moreover, $G'$ immediately satisfies all but properties \eqref{monotri} and \eqref{nox} of a good graph.

We first claim that $G'$ cannot contain any nonmonochromatic, nonrainbow triangles. Indeed, there are two ways to produce a triangle in $G'$ that was not already present in $G$. Either we have the edge $y_1y_2$, or we have at least one edge between $\{w_1, z_1\}$ and $\{w_2, z_2\}$.

Let us first consider the second case. Suppose, wolog, that $w_1w_2\in E(G)$; and let $\zeta = c(w_1w_2)$. Note that $\zeta \neq \alpha$, as it is not incident to any of $x_1, y_1 v$, and likewise, $\zeta\neq \beta$. On the other hand, as only vertices $x_1, w_1, z_1$ may be incident to edges of color $\gamma$, and the sets $\{y_1, w_1, z_1\}$, $\{y_2, w_2, z_2\}$ are disjoint, we also have $\zeta \neq \gamma$. Likewise, $\zeta \neq\delta$, and hence the triangle $(w_1,w_2,x)$ is rainbow.

On the other hand, let us suppose that we have the edge $y_1y_2$ in $G$. Recalling that both $y_1$ and $y_2$ are Type I vertices in $G$, we have that the edge $y_1y_2$ cannot take color $\alpha$ or $\beta$. Hence, the triangle $vy_1y_2$ in $G'$ must be rainbow.

\begin{figure}[htp]
\includegraphics[scale=.8]{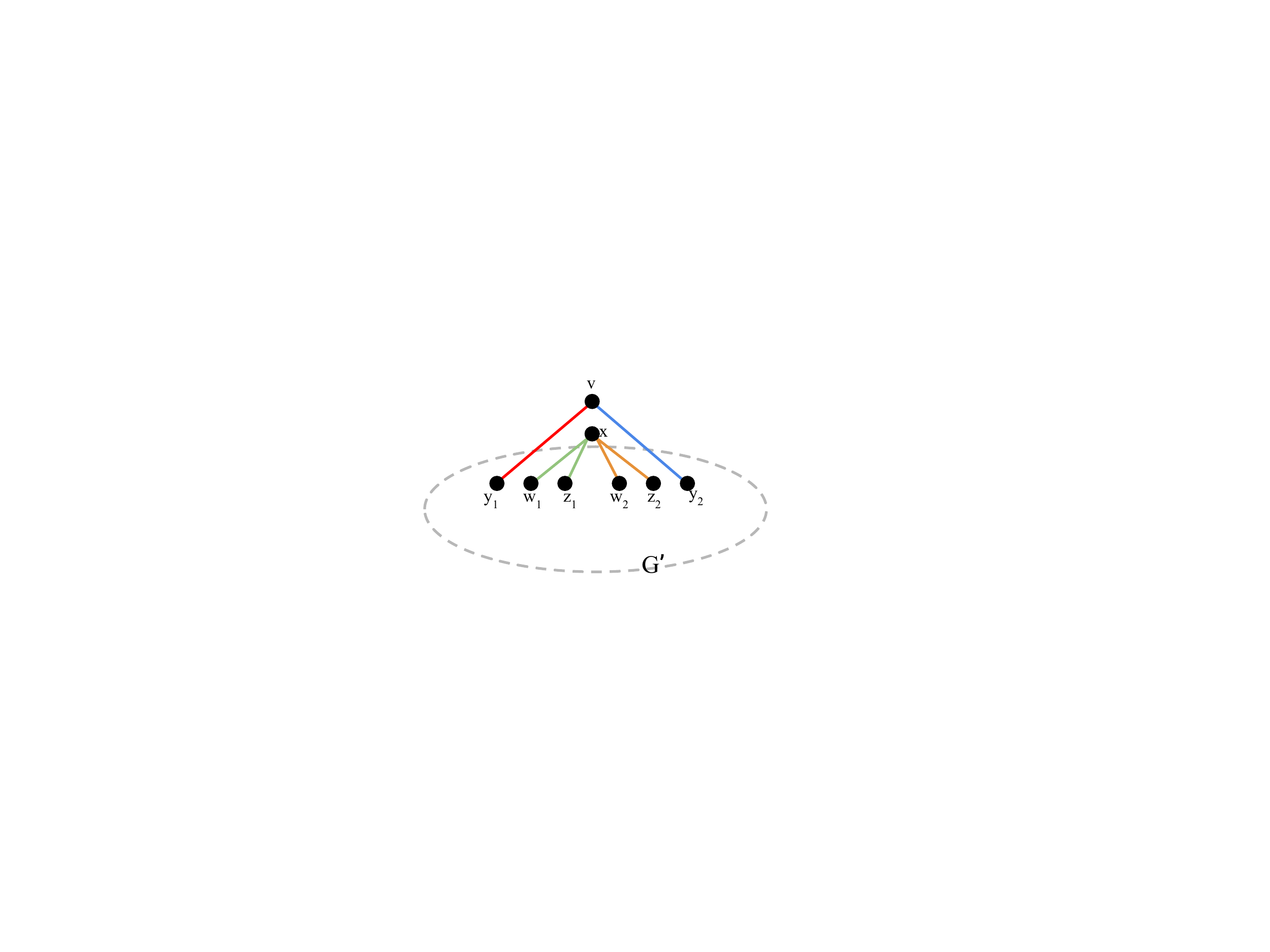}
\caption{The transformation to $G'$ in the subcase \ref{C:nonsingular}.1.}\label{F:2.2.1Gprime}
\end{figure}

Therefore, any new triangle created in $G'$ must be rainbow. Hence, there are two possibilities: either $G'$ is good, or $G'$ contains a cut vertex of Type $X$. We consider each of these as subcases.

\quad

\noindent\textbf{Subcase 2.2.1(a).} $G'$ is a good colored graph.

\quad

Since $G'$ is good, there exists a decomposition of the edges of $G'$ into rainbow cycles, $\mathcal{C}=\{C_1, C_2, \dots, C_k\}.$ 

Suppose that we have a rainbow cycle $C\in\{C_1, \dots, C_k\}$ such that $C$ uses neither $v$ nor $x$. Note then that $C$ uses none of the edges shown in Figure \ref{F:2.2.1Gprime}. Moreover, $C$ is also a rainbow cycle in $G$, using none of the edges shown in Figure \ref{F:2.2setup}. Moreover, by Lemma \ref{setcycles}, $G'\ba C$ is a good colored graph. Moreover, if $G\ba C$ has a cut vertex of Type $X$, say $t$, then take $G_1, G_2$ to be pseudoblocks of $G\ba C$ at $t$. Note that if $t$ is not one of $x_1, x_2, w_i$ or $z_i$, then as the subgraph induced on $\{v, x_1, y_1, w_1, z_1, x_2, y_2, w_2, z_2\}$ is connected in $G$, we must have that all of these vertices are in the same pseudoblock, say $G_1$. But then $G_2$ is an induced subgraph of $G'\ba C$, and hence $t$ is also a cut vertex of Type $X$ in $G'\ba C$, a contradiction. Hence, $t$ must be one of $x_1, x_2, w_i$ or $z_i$.

Suppose $x_1$ or $x_2$ is a cut vertex of Type $X$ in $G\ba C$; wolog, say it is $x_1$. Then wolog, we have $w_1, z_1\in V(G_1)$ and $y_1, v, x_2, w_2, z_2\in V(G_2)$, as shown in Figure \ref{F:2.2.1axcut}. But notice then that if $G_1'$ and $G_2'$ are subgraphs of $G'\ba C$, induced on $V(G_1)\ba\{x_1\}\cup \{x\}$ and $V(G_2)\ba\{x_1\}\cup \{x\}$, then $G_1'$ and $G_2'$ cover all edges of $G'\ba C$, and hence $x$ is a cut vertex of Type $X$ in $G'\ba C$, a contradiction. 

\begin{figure}[htp]
\includegraphics[scale=.8]{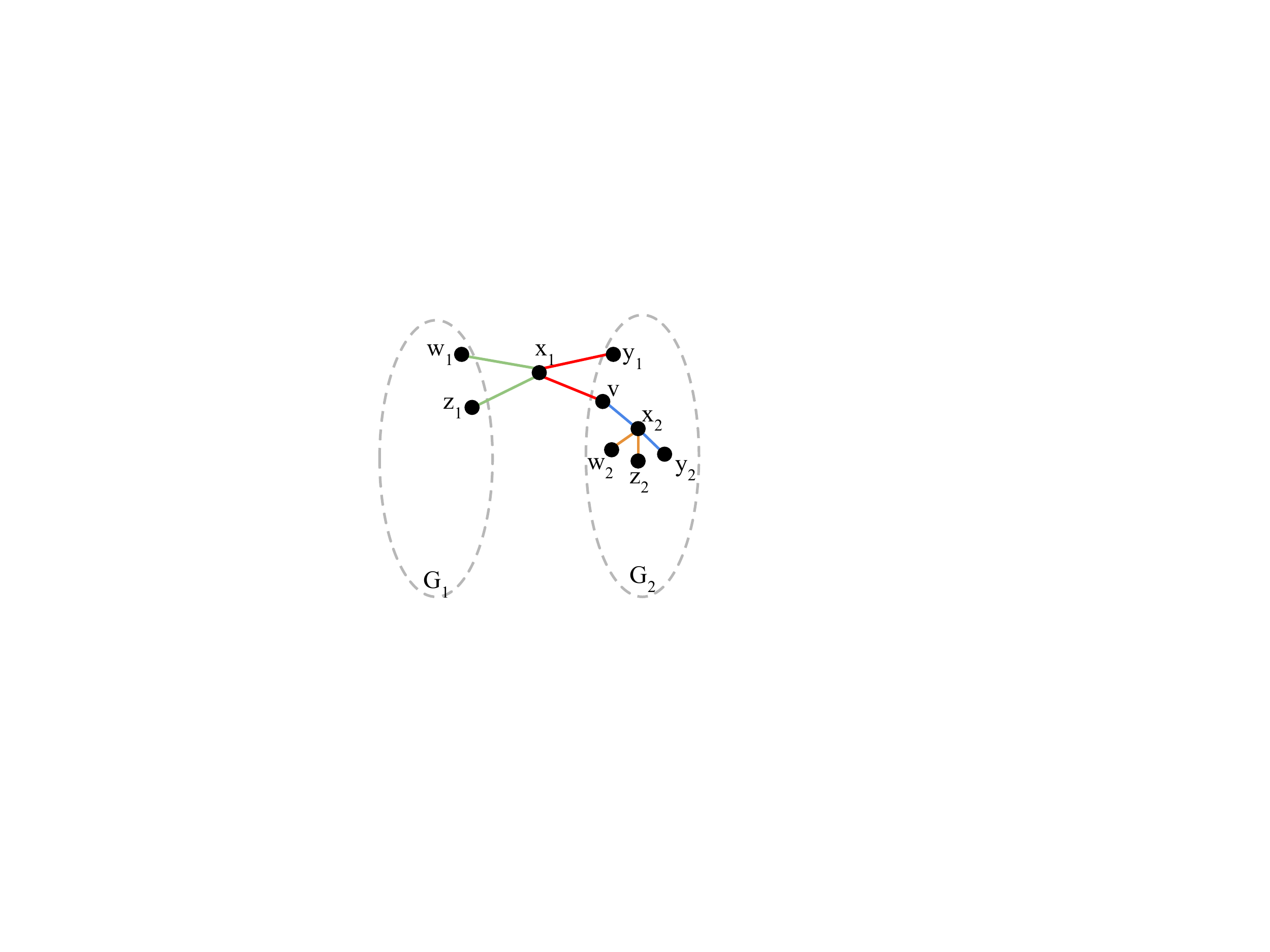}
\caption{The structure of $G\ba C$ in the case that $x_1$ is a cut vertex in Subcase 2.2.1(a).}\label{F:2.2.1axcut}
\end{figure}

The final possibility, then, is that one of $\{w_1, w_2, z_1, z_2\}$ is a cut vertex of Type $X$, suppose wolog that it is $w_1$. Note then that there must be an edge $w_1z_1\in E(G)$, with color $\gamma$, and moreover, vertices $z_1, x_1$ are in the same pseudoblock of $G\ba C$, say $G_1$. But then as the subgraph induced on $x_1, v, x_2, w_2, z_2$ is connected, we must have that all of these vertices are in the same pseudoblock of $G\ba C$, and hence as in the case that $x_1$ was a cut vertex of Type $X$, we must have $w_1$ is a cut vertex of Type $X$ in $G'\ba C$, a contradiction.

Hence, if there is a cycle $C$ in $\{C_1, C_2, \dots C_k\}$ that does not use the vertices $v$ or $x$, then that same cycle can be lifted to $G$, and moreover, its removal yields a good colored graph $G\ba C$.

On the other hand, if there are no rainbow cycles in $\{C_1, C_2, \dots, C_k\}$ that do not use the vertices $v$ or $x$, then we have two cases. Either $k=2$, and there is one cycle using both $v$ and $x$, and another using only $x$, or $k=3$, and there are two cycles using $x$ but not $v$, and one using $v$. The possible arrangements of these cycles are shown in Figure \ref{F:2.2.1fewcycles}.

In the case that $k=3$, let $C_1$ be a cycle using both $v$ and $x$. Then there are two possibilities. Either we have a path $P_1$ from $y_1$ to (wolog) $w_1$ and a path $P_2$ from $y_2$ to (wolog) $w_2$ (see Figure \ref{F:2.2.1fewcyclesa}), or we have paths $P_1$ from $y_1$ to $w_2$ and $P_2$ from $y_2$ to $w_1$ (see Figure \ref{F:2.2.1fewcyclesb}). In either case, both of these paths do not use the colors $\alpha, \beta, \gamma$, or $\delta$, and the color sets of $P_1$ and $P_2$ are disjoint. In addition, we have a third path $P_3$ from $z_1$ to $z_2$, also not using the colors $\alpha, \beta, \gamma,$ or $\delta$.

\begin{figure}[htp]
\begin{subfigure}{0.3\textwidth}
\includegraphics[width=.9\textwidth]{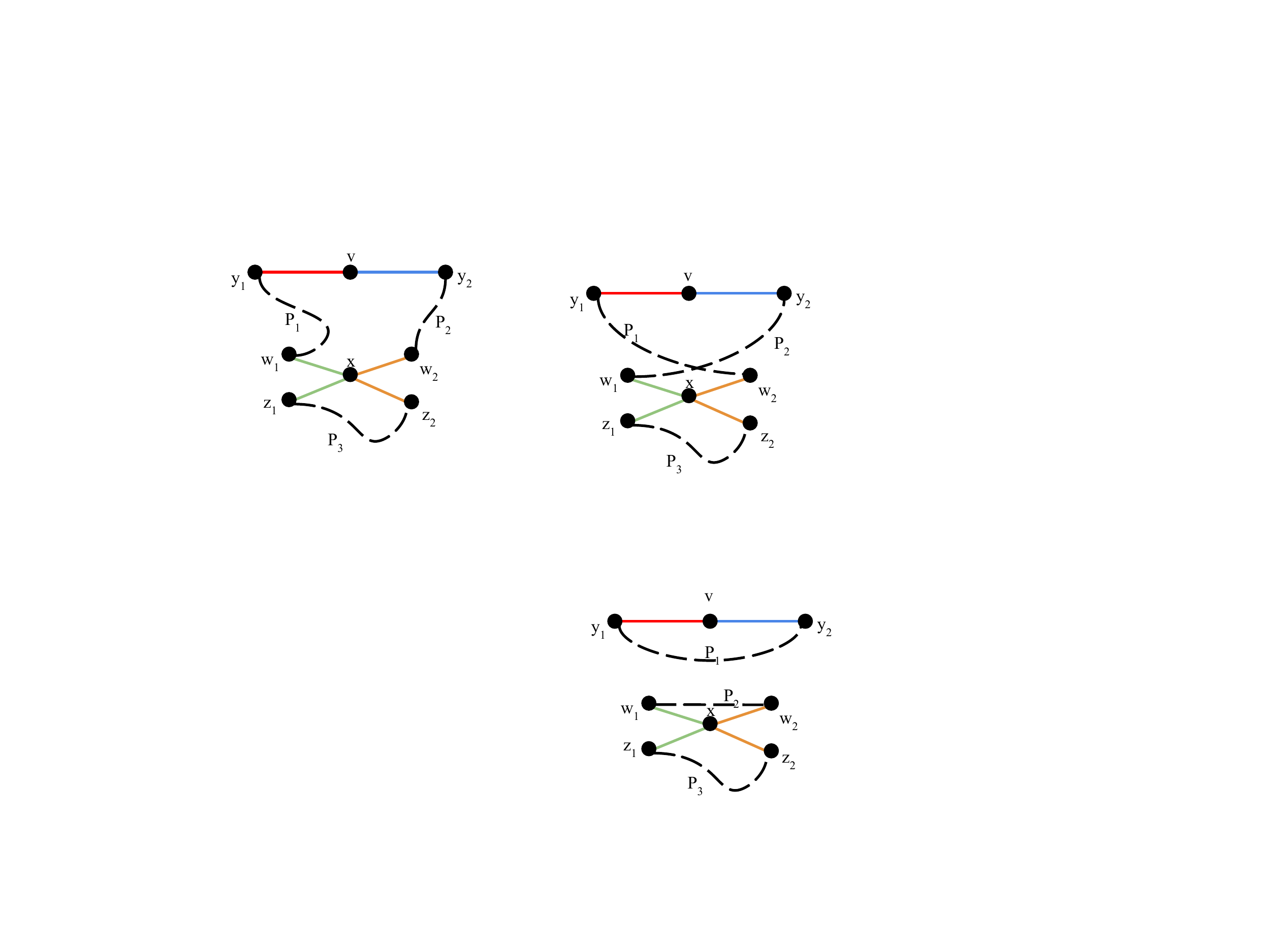}
\caption{The first option, representing one cycle containing $x, z_1, z_2$, and another containing $x, w_1, w_2, y_1, y_2, v$. We note here that $P_1$ and $P_2$ are disjoint paths containing no edges of color $\alpha, \beta, \gamma, \delta$, and $P_3$ is a path containing no edges of color $\alpha, \beta, \gamma, \delta$, but need not be disjoint from $P_1$ or $P_2$.}\label{F:2.2.1fewcyclesa}
\end{subfigure}\hspace{.3in}\begin{subfigure}{0.3\textwidth}
\includegraphics[width=.9\textwidth]{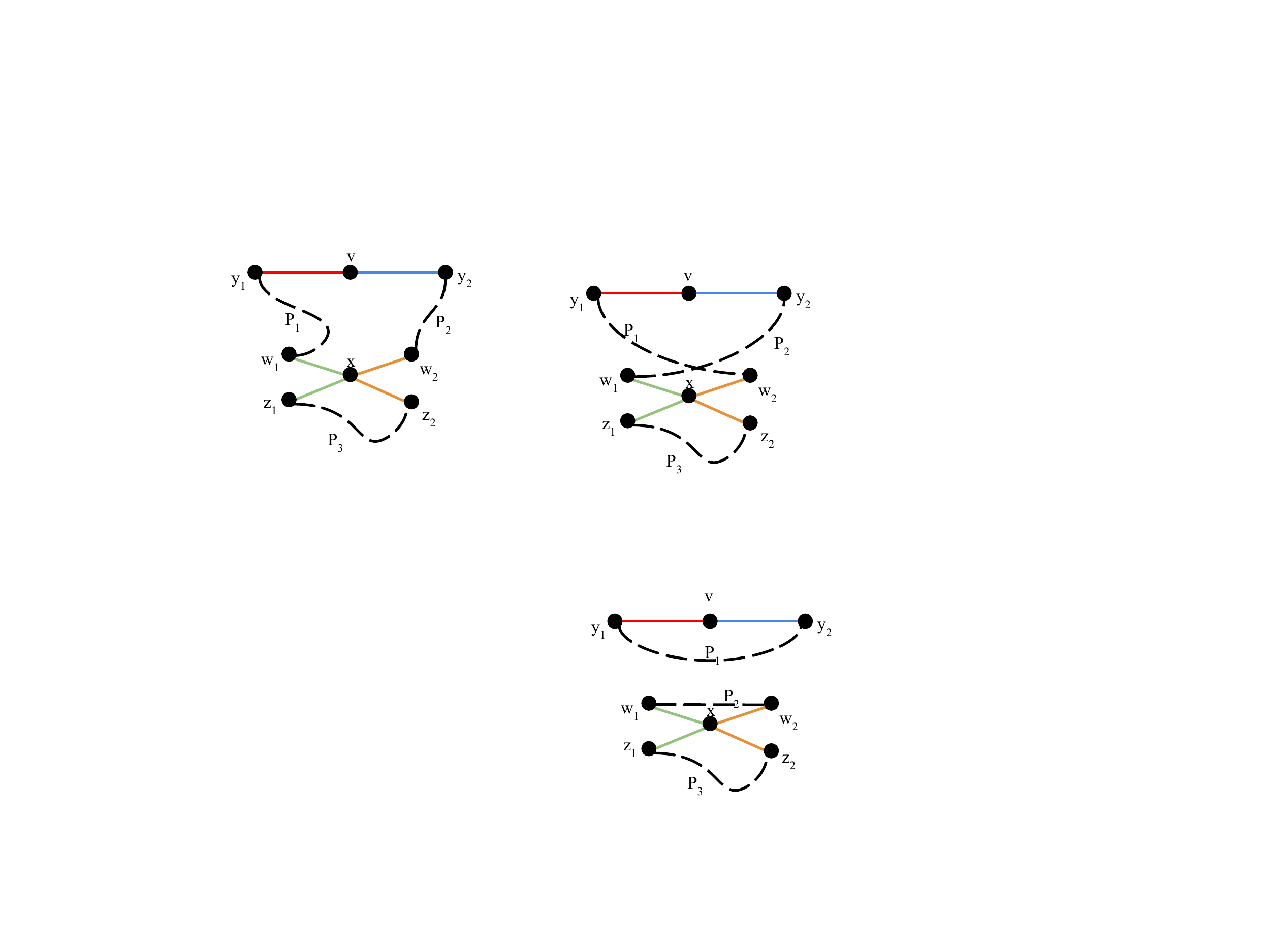}
\caption{The second option, representing one cycle containing $x, z_1, z_2$, and another containing $x, w_1, w_2, y_1, y_2, v$, where here $w_1$ and $w_2$ appear in the opposite orientation as in the first case. As in Figure \ref{F:2.2.1fewcyclesa}, $P_1$ and $P_2$ are disjoint paths containing no edges of color $\alpha, \beta, \gamma, \delta$, and $P_3$ is a path containing no edges of color $\alpha, \beta, \gamma, \delta$, but need not be disjoint from $P_1$ or $P_2$.}\label{F:2.2.1fewcyclesb}
\end{subfigure}\hspace{.3in}\begin{subfigure}{0.3\textwidth}
\includegraphics[width=.9\textwidth]{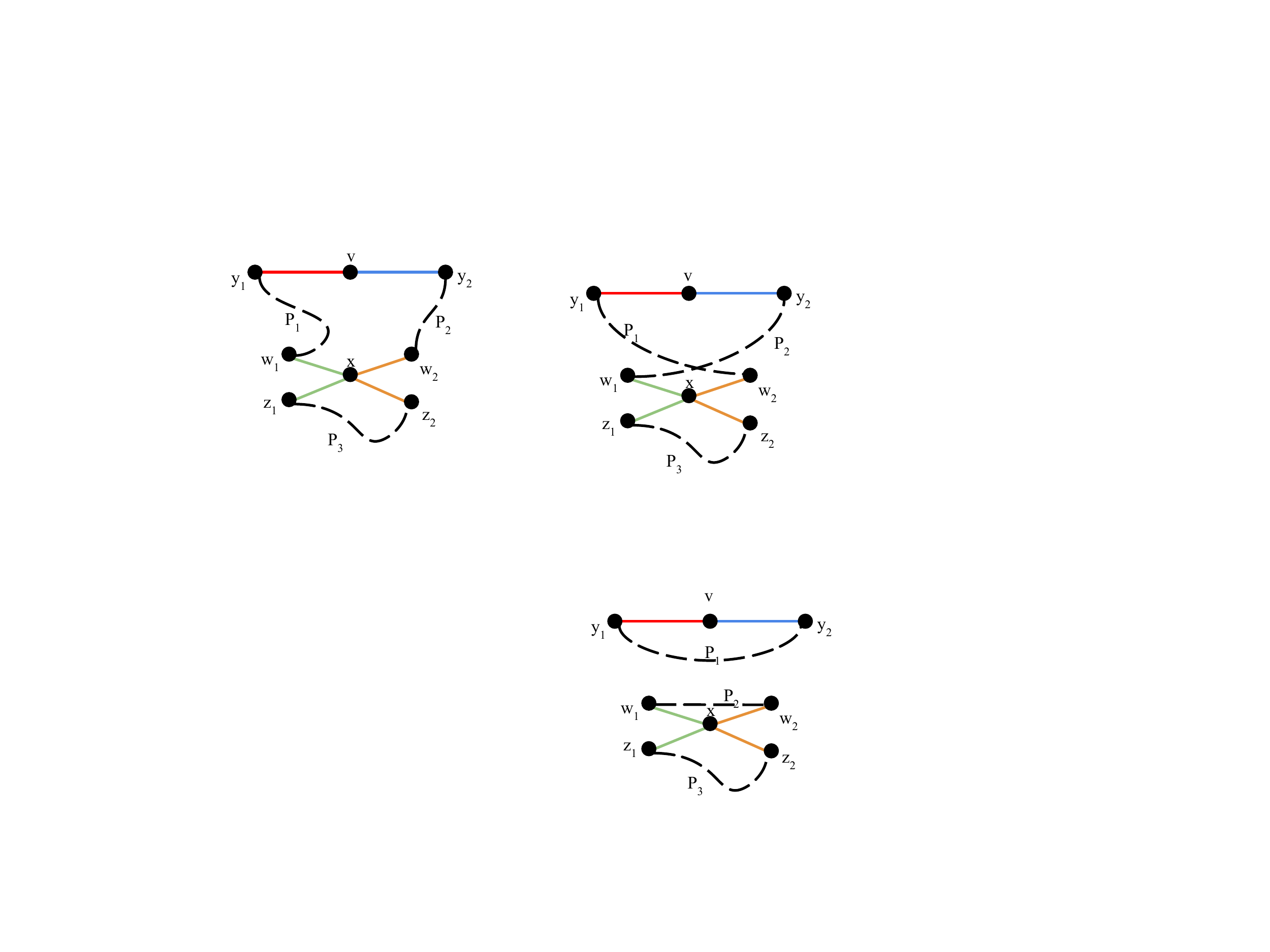}
\caption{The third option, representing three cycles, one of which contains $y_1, v y_2$, one of which contains $w_1, x, w_2$, and the third of which contains $z_1, x, z_2$. We note that none of the indicated paths may use colors $\alpha, \beta, \gamma,$ or $\delta$, but also none of the indicated paths need be disjoint from any of the others.}\label{F:2.2.1fewcyclesc}
\end{subfigure}

\caption{The possible options for Subcase 2.2.1(a) in the event that there are no rainbow cycles in $\{C_1, C_2, \dots, C_k\}$. In these diagrams, we note that any dashed lines represent paths, rather than edges. Moreover, we do not claim that any of these paths are in fact disjoint; any specific disjoint paths are mentioned for that case.}
\label{F:2.2.1fewcycles}
\end{figure}

For the first option, we take the decomposition in $G$ to be three cycles: $(x_1, w_1, P_1, y_1, x_1)$, $(x_2, w_2, P_2, y_2, x_2)$, and $(z_1, x_1, v, x_2, z_2, P_3, z_1)$ (see Figure \ref{F:2.2.1fewcyclesGa}). All of these are rainbow cycles, and cover all edges of $G$. In the second option, we take the decomposition in $G$ to be two cycles: $(x_1, w_1, P_2, y_2, x_2, w_2, P_1, y_1, x_1)$ and $(x_1, v, x_2, z_2, P_3, z_1, x_1)$ (see Figure \ref{F:2.2.1fewcyclesGb}). All of these are rainbow cycles, since $P_1$ and $P_2$ may not repeat colors, and cover all edges of $G$. 

In the case that $k=2$, we have that $G'$ takes the following structure: One rainbow cycle of the form $(y_1, v, y_2, P_1, y_1)$, and two rainbow cycles involving vertex $x$; wolog, these take the form $(w_1, P_2, w_2, x, w_1)$ and $(z_1, P_3, z_2, x, z_1)$ (see Figure \ref{F:2.2.1fewcyclesc}). We note that the path $P_1$ does not use colors $\alpha$ or $\beta$, and the paths $P_2, P_3$ do not use colors $\alpha, \beta, \gamma, $ or $\delta$, as the only edges colored $\alpha$ or $\beta$ are incident to $y_1$ and $y_2$, respectively. We further note that it may not be the case that any of these cycles are disjoint; that is, we may have shared colors in any of these paths.

We then consider the following cycle in $G$: $C=(x_1, w_1, P_2, z_1, x_2, v, x_1)$ (see Figure \ref{F:2.2.1fewcyclesGc}). Clearly this cycle is rainbow, since $P_2$ cannot include any edges of color $\alpha, \beta, \gamma, $ or $\delta$. Moreover, we claim that the graph $G\ba C$ is 2-connected (disregarding any isolated vertices), and hence has no cut vertices of Type $X$. Indeed, suppose that $a, b\in V(G\ba C)$ are nonisolated. If $a, b$ both appear on the cycle $C_1=(x_1, y_1, P_1, y_2, x_2, v, x_1)$, then clearly there are two vertex disjoint paths between them. Likewise, if $a, b$ both appear on the cycle $C_2=(x_1, z_1, P_3, z_2, x_2, v, x_1)$, then again there are two vertex disjoint paths between them. 

Hence, we may suppose that $a$ is on $C_1$, and $b$ is on $C_2$, but neither vertex is on both cycles. We form two vertex disjoint paths between $a$ and $b$ as follows. First, let $c$ be the first vertex of $C_1$ appearing before $a$ in the presentation $(y_1, x_1,v,x_2,y_2,P_1,y_1)$ with $c$ also a member of $C_2$. Note that as $a$ is not a member of $C_2$, $a\neq x_1$, and hence this is well defined, as $x_1$ is a member of $C_2$ and we thus will always choose a vertex between $x_1$ and $a$. Likewise, let $d$ be the first vertex of $C_1$ appearing after $a$ in the presentation $(y_1,x_1,v,x_2,y_2,P_1,y_1)$ with $d$ also a member of $C_2$. As above, $a\neq x_2$, and hence this vertex is well defined. Moreover, as $x_1$ and $x_2$ could both satisfy these conditions, we therefore will have that $c\neq d$. As $c$ and $d$ are both distinct vertices on $C_2$, we can construct two paths $P_c$ and $P_d$ from $c$ to $b$ and $d$ to $b$, respectively, along $C_2$, such that $P_c$ and $P_d$ are internally vertex disjoint. Hence, by concatenating $P_c$ and $P_d$ with the paths along $C_1$ from $a$ to $c$ and $a$ to $d$, we obtain two internally vertex disjoint paths between $a$ and $b$.

Therefore, in the case that $k=2$, we have found a cycle $C$ such that $G\ba C$ is good, as desired.

\begin{figure}[htp]
\begin{subfigure}{0.3\textwidth}
\includegraphics[width=.9\textwidth]{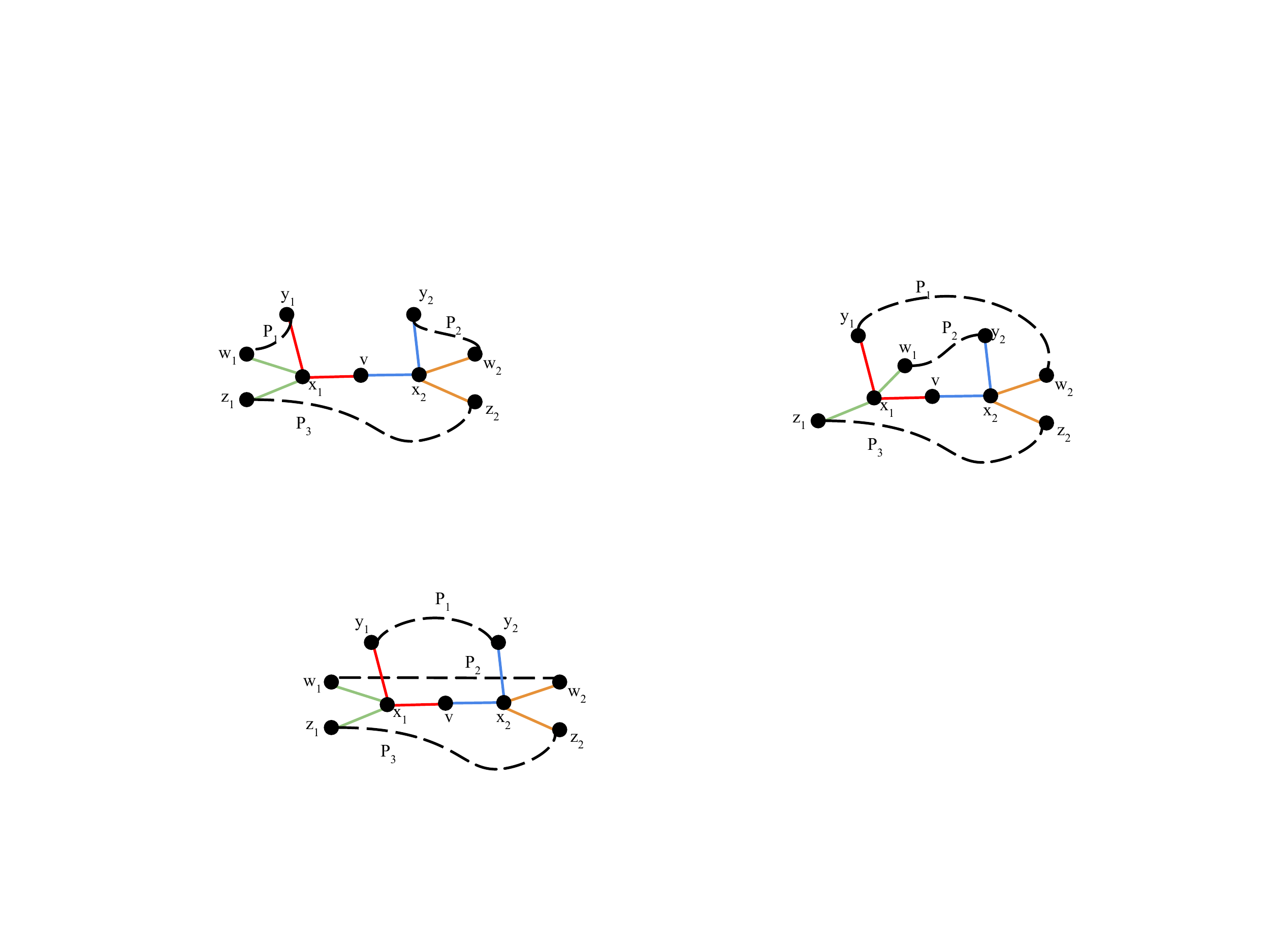}
\caption{}\label{F:2.2.1fewcyclesGa}
\end{subfigure}\hspace{.3in}\begin{subfigure}{0.3\textwidth}
\includegraphics[width=.9\textwidth]{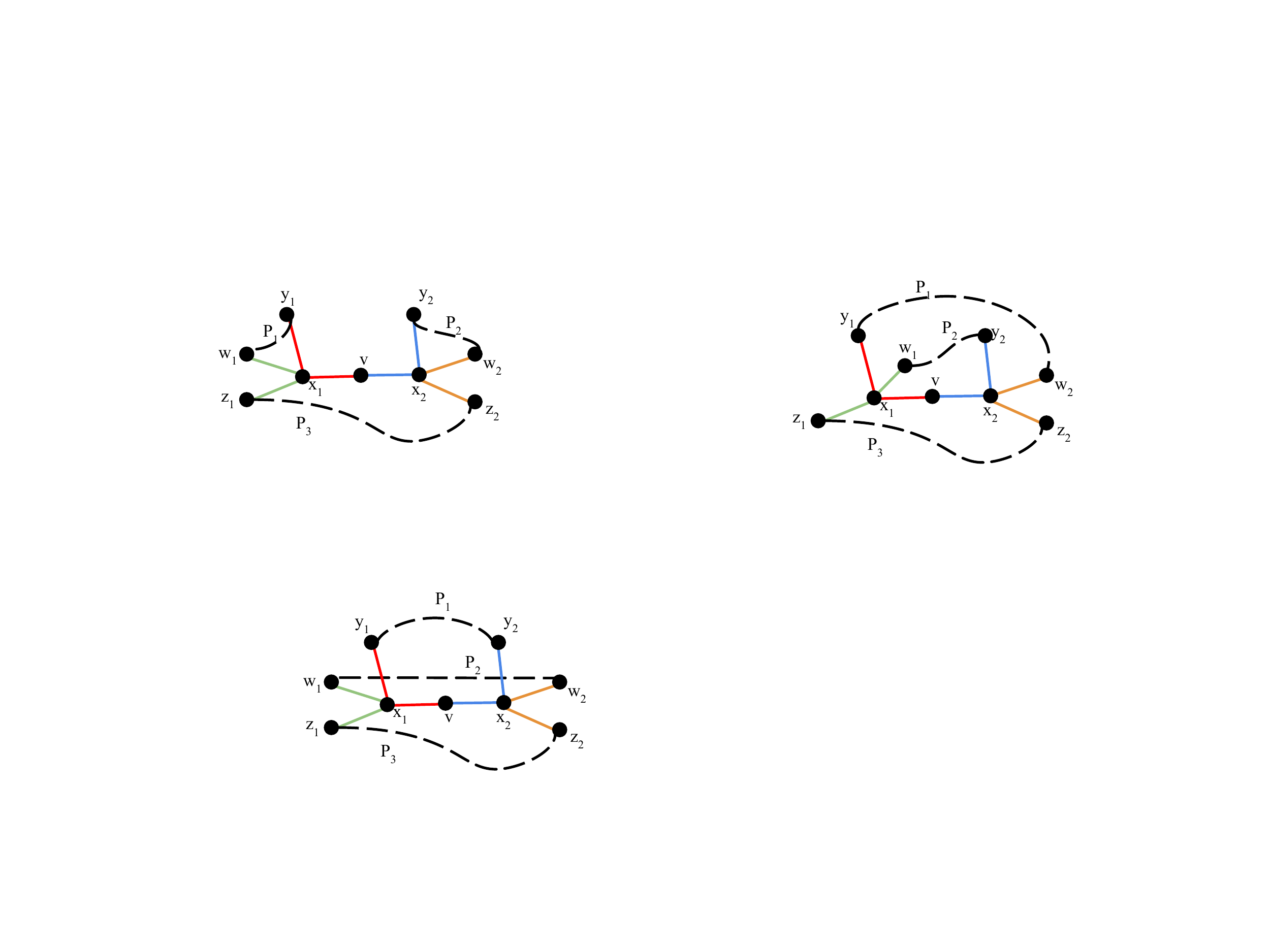}
\caption{}\label{F:2.2.1fewcyclesGb}
\end{subfigure}\hspace{.3in}\begin{subfigure}{0.3\textwidth}
\includegraphics[width=.9\textwidth]{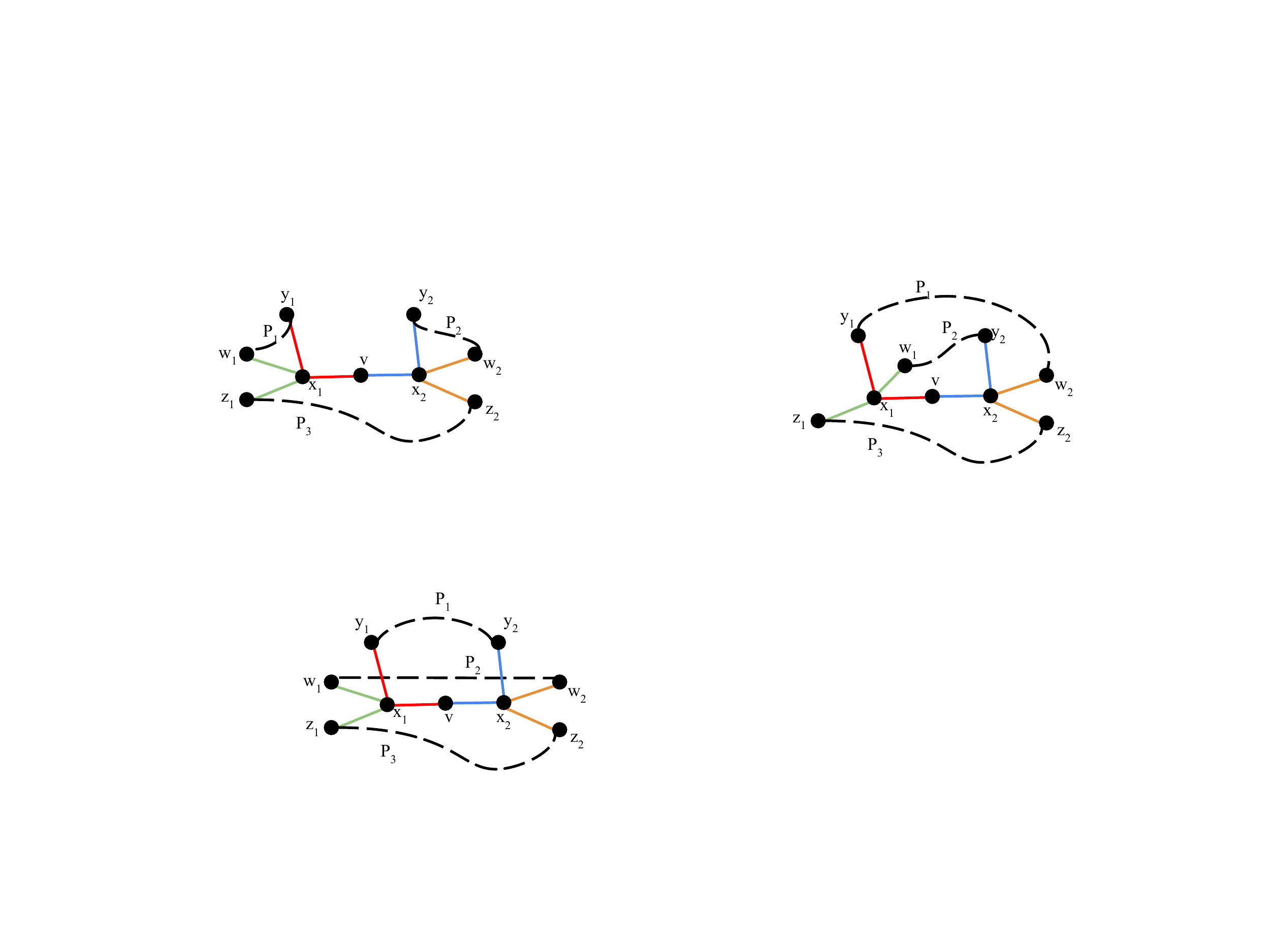}
\caption{}\label{F:2.2.1fewcyclesGc}
\end{subfigure}

\caption{The transformations from $G'$ to $G$ corresponding to the cycle structures shown in Figure \ref{F:2.2.1fewcycles}. We note as above that any dashed lines represent paths, rather than edges, and that we make no assumptions about the disjointness of these paths that are not mentioned in the captions in Figure \ref{F:2.2.1fewcycles}.}
\label{F:2.2.1fewcyclesG}
\end{figure}

Therefore, if $G'$ is good, then there exists a rainbow cycle $C$ in $G$ such that $G\ba C$ is good. Let us now turn to the subcase in which $G'$ is not good.

\quad

\noindent\textbf{Subcase 2.2.1(b).}  $G'$ has a cut vertex of Type $X$.

\quad

Let $t$ be a cut vertex of Type $X$ in $G'$. First, suppose $t=x$. Let $G_1$ and $G_2$ be the pseudoblocks of $G$ at $x$. Wolog, suppose $w_1, w_2\in V(G_1)$, and $z_1, z_2\in V(G_2)$, and $y_1, v, y_2\in V(G_1)$. Let $G_2'$ be the induced subgraph of $G$ having $V(G_2')=V(G_2)\ba\{x\}\cup\{x_2\}$, and let $G_1'$ be the induced subraph of $G$, having $V(G_1')=V(G_1)\ba\{x\}\cup\{x_1\}$. Notice then that all edges of $G$ are present in either $G_1'$ or $G_2'$, and hence we have that $x_2$ is a cut vertex of type $X$ in $G$. As this is impossible, since $G$ is good by hypothesis, $x$ cannot be a cut vertex of type $X$. This structure is illustrated in Figure \ref{F:Case2.2.1cxcut}.
 
Hence, we must have that $t\neq x$. Let us form a decomposition of $G$ into induced subgraphs $G_1, G_2, \dots, G_s$ as follows.

First, let $B_1, B_2, \dots, B_r$ be the blocks of $G$, and let $B$ be the block graph of $G$. Define an equivalence relation $R$ on $\{B_i\}$ as follows: for any $i, j$, if $B_i$ and $B_j$ are in the same component of $B$, let $P$ be the path $B_i=B_{i_0}, B_{i_1}, \dots, B_{i_r}, B_j=B_{i_{r+1}}$ from $B_i$ to $B_j$ in $B$. If, for all $0\leq k\leq r$, we have that the unique vertex in $V(B_{i_k})\cap V(B_{i_{k+1}})$ is not a cut vertex of Type $X$, then we take $B_i \sim_R B_j$. 

Let $G_1, G_2, \dots, G_s$ be the graphs obtained by the unions of each equivalence class under $R$. Then although these graphs may not be 2-connected, they are induced subgraphs, and the intersection between any pair $G_i, G_j$ is either empty or is a cut vertex of Type $X$. Moreover, every cut vertex of Type $X$ in $G$ will be found as the intersection of two such graphs. We shall refer to these graphs as $X$-blocks, and the decomposition as the $X$-block decomposition of $G'$; note that unlike pseudoblocks, this decomposition is unique. In a natural way, then, we may define a forest $T$ with $V(T)=\{G_1, \dots, G_s\}$ and $E(T) = \{ij\ | \ V(G_i)\cap V(G_j)\hbox{ is a cut vertex of Type $X$}\}$. Note that $T$ can also be viewed as a contraction of $B$, where we contract any edge corresponding to a cut vertex that is not of Type $X$. Wolog, let $x\in V(G_1)$, and note that as $x$ is not a cut vertex of type $X$ in $G'$, we must have $w_1, z_1, w_2, z_2\in V(G_1)$ also. Moreover, since no cut vertices of Type $X$ exist in $G$, we must have that $v$ is in a distinct $X$-block of $G'$, say $G_s$. We note that $v$ cannot be a cut vertex, since it is of degree 2 and $G'$ is even, and so we must also have $y_1, y_2\in V(G_s)$.

 \begin{figure}[htp]
\includegraphics[height=.2\textheight]{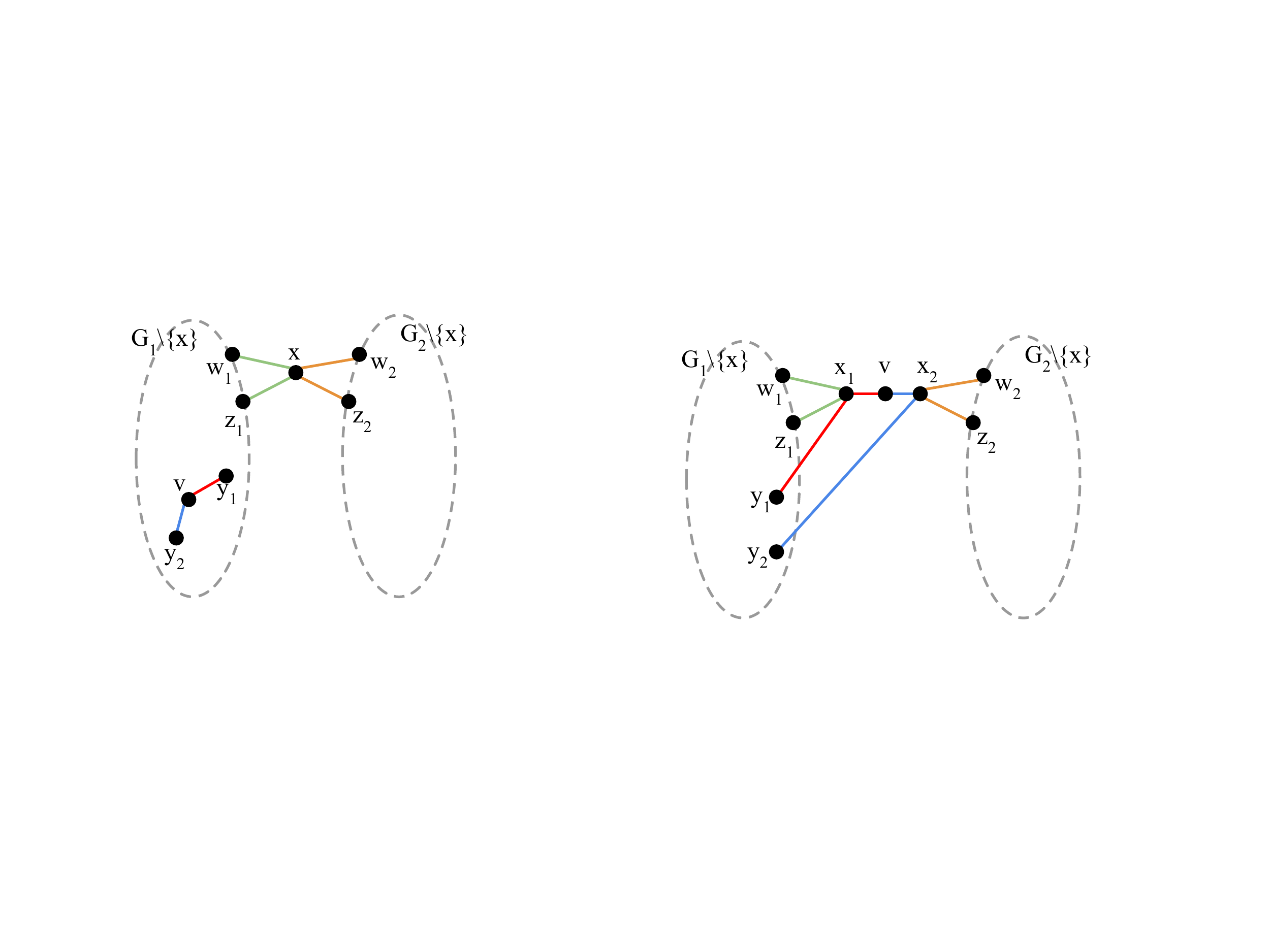}
\hspace{.3in}
\includegraphics[height=.18\textheight]{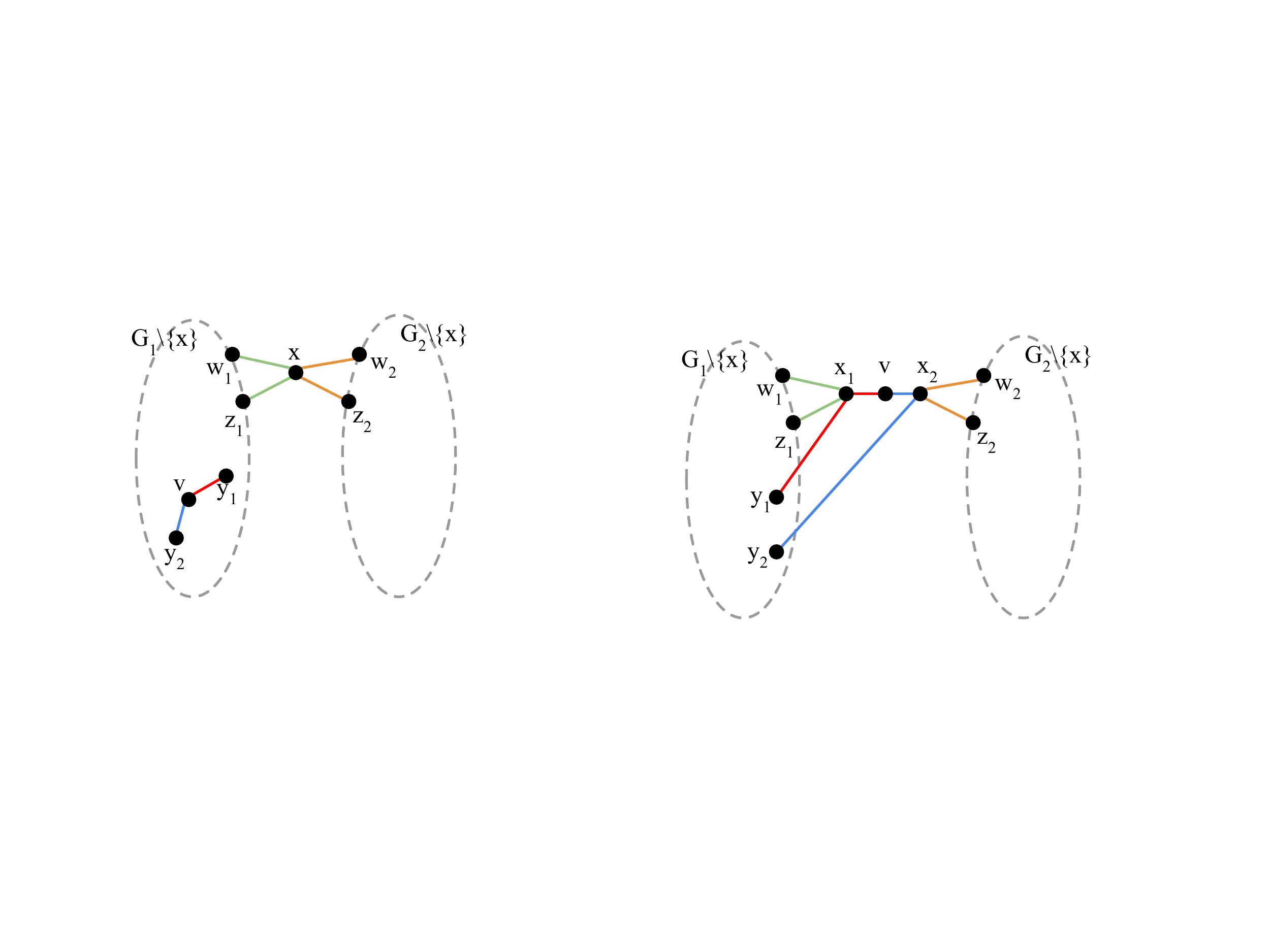}
\caption{The structure in $G'$ (left) and $G$ (right) in the case that $x$ is a cut vertex in $G'$ in Subcase 2.2.1(b).}\label{F:Case2.2.1cxcut}
 \end{figure}

\begin{claim}
$T$ is a path, and its endpoints are $G_1$ and $G_s$.
\end{claim}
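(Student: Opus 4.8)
The plan is to show that $T$ is a tree whose only leaves are $G_1$ and $G_s$. Since we are in Subcase 2.2.1(b), $G'$ has a cut vertex of Type $X$, so $T$ has at least one edge; and $G'$ is connected (any path of $G$ reroutes through $v$ or through the merged vertex $x$), so the block graph of $G'$, and hence its contraction $T$, is a tree. A tree on at least two vertices has at least two leaves, so once only $G_1$ and $G_s$ can be leaves, $T$ must be a path with endpoints exactly $G_1$ and $G_s$ (recall $G_1\neq G_s$, as $v$ and $x$ lie in different $X$-blocks).

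The feature to exploit is that $G$ and $G'$ differ only near $v$: the edges of $G'$ are the edges of $G$ not incident to $x_1$ or $x_2$, together with the four relabelled edges $xw_1, xz_1, xw_2, xz_2$ and the two new edges $vy_1, vy_2$. In particular, an edge of $G'$ incident to neither $v$ nor $x$ is literally an edge of $G$ incident to neither $x_1$ nor $x_2$, and conversely. Suppose for contradiction that $G_j$ is a leaf of $T$ with $G_j\notin\{G_1,G_s\}$, and let $t\in V(G_j)$ be the unique Type $X$ cut vertex of $G'$ through which $G_j$ is attached to the rest of $G'$; thus $t$ separates $V(G_j)\ba\{t\}$ from $V(G')\ba V(G_j)$ in $G'$.

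I would then locate the distinguished vertices. We already know $v, y_1, y_2\in V(G_s)$ and $x, w_1, z_1, w_2, z_2\in V(G_1)$; moreover $v$ has degree $2$ and $x$ is not a Type $X$ cut vertex, so each of $v, x$ lies in a single $X$-block (namely $G_s$, resp.\ $G_1$), and likewise $y_1, y_2$ (degree $2$) lie only in $G_s$. Since $G_j$ is neither $G_1$ nor $G_s$, it follows that $v, x, y_1, y_2\notin V(G_j)$, hence $x_1, x_2\notin V(G_j)$, and (as distinct $X$-blocks meet only in a Type $X$ cut vertex) $V(G_j)\cap\{w_1, z_1, w_2, z_2\}\subseteq\{t\}$. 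Thus $V(G_j)$ is disjoint from $\{v, y_1, y_2, x_1, x_2\}$, and a direct edge-by-edge check gives: (i) $G_j$ is exactly the subgraph of $G$ induced on $V(G_j)$, because every modified edge is incident to $\{x_1, x_2\}$ in $G$ and to $\{v, x\}$ in $G'$, and $V(G_j)$ avoids all of these; and (ii) in $G$, the vertex $t$ still separates $V(G_j)\ba\{t\}$ from $V(G)\ba V(G_j)$, because a crossing edge would have to be either unmodified---and then it crosses in $G'$ too, impossible---or one of the edges at $x_1$ or $x_2$, which forces its endpoint in $V(G_j)$ to be some $w_i$ or $z_i$ and hence to equal $t$, again impossible. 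Finally, $t$ has the same degree and the same multiset of incident colours in $G$ as in $G'$, since relabelling $x\mapsto x_1$ or $x_2$ preserves edge colours; so the pseudoblock decomposition of $G'$ at $t$ into $G_j$ and the remainder carries over to $G$, exhibiting $t$ as a cut vertex of Type $X$ in $G$. This contradicts goodness of $G$, so $T$ has no leaf other than $G_1$ and $G_s$, and the claim follows.

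The main obstacle is the bookkeeping behind (i) and (ii), in particular the subcase $t\in\{w_1, z_1, w_2, z_2\}$: this is the one vertex of $V(G_j)$ that can be incident to a modified edge (its edge to $x$, inherited from $x_1$ or $x_2$), so one must check carefully that this single relabelling alters neither the degree of $t$, nor its colour pattern, nor which side of $t$ each of its incident edges lies on. One should also confirm that the Type $X$ splitting of $G'$ at $t$ being used is exactly the one separating $G_j$ from the rest (immediate from how the $X$-blocks are built) and that the degenerate possibilities for the attaching vertex---namely $v$, $y_1$, $y_2$, $x$---are genuinely ruled out, which is precisely what locating the distinguished vertices accomplishes.
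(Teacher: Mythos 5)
Your overall strategy is sound and is, if anything, more explicit than the paper's own argument, but it is organized differently. The paper argues globally: the $X$-block decomposition of $G$ is obtained from that of $G'$ by merging $G_1$, $G_s$, and everything on the path between them (since $G$ restores the connections $x_1v$, $x_2v$), and since $G$ is good its $X$-block decomposition is a single block, so every $X$-block of $G'$ must lie on that path; the degenerate disconnected case is disposed of by noting it would leave no Type $X$ cut vertex at all. You instead argue locally: any leaf $G_j\notin\{G_1,G_s\}$ of $T$ is attached at a Type $X$ cut vertex $t$ that avoids all modified edges, and you verify edge-by-edge that $t$ is then already a Type $X$ cut vertex of $G$, contradicting goodness. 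This is a legitimate and in some ways preferable route, since it makes explicit the transfer of the Type $X$ property from $G'$ back to $G$ that the paper's merging claim silently relies on; your careful handling of the case $t\in\{w_1,z_1,w_2,z_2\}$ is exactly the bookkeeping the paper omits.

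The one genuine soft spot is your opening assertion that $G'$ is connected ``because any path of $G$ reroutes through $v$ or through the merged vertex $x$.'' That rerouting claim is false for a general good $G$: if $G$ is the union of two cycles meeting only at $x_1$ and $x_2$ (one through $v,y_1,y_2$, one through $w_1,z_1,w_2,z_2$), then $G$ is $2$-connected and good, but $G'$ splits into the cycle through $v,y_1,y_2$ and a disjoint figure-eight at $x$ -- a path of $G$ entering $x_1$ from $y_1$ and leaving toward $w_1$ has no replacement in $G'$. In that example $G'$ has no Type $X$ cut vertex, so one is not in Subcase 2.2.1(b); and indeed connectivity of $G'$ \emph{within this subcase} is true, but it is a consequence of your leaf argument rather than a prerequisite for it, so as written there is a mild circularity. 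The fix is cheap: run your leaf argument on the forest $T$ directly (your contradiction never uses connectivity of $T$). Every leaf and every isolated vertex of $T$ must then be $G_1$ or $G_s$; since $G_1\neq G_s$, either $T$ is two isolated vertices -- impossible, as then $G'$ would have no Type $X$ cut vertex, contrary to the hypothesis of Subcase 2.2.1(b) -- or $T$ is a single path from $G_1$ to $G_s$, as claimed.
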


\begin{proof}[Proof of Claim] 

Notice that if we consider the $X$-block decomposition of $G$, it can be obtained from the $X$-block decomposition of $G'$ by merging together $G_1, G_s$, and (if $G_1$ and $G_s$ are in the same component of $T$) any vertices on the path between $G_1$ and $G_s$, as in $G$, these two $X$-blocks will be joined by the edges $x_1v$ and $x_2v$.

Moreover, since $G$ is either good or almost good, the $X$-block decomposition of $G$ is a single vertex. Hence, if $T$ is disconnected, then $T$ consists of exactly two $X$-blocks, namely $G_1$ and $G_2=G_s$. But then there is no cut vertex of Type $X$ in either of these $X$-blocks, a contradiction. Thus, $T$ is connected, and as merging $G_1$ with $G_s$ and all other vertices on the path between them, we have that every vertex in $T$ appears in the path between $G_1$ and $G_s$; that is, $T$ must consist of a single path, with endpoints $G_1$ and $G_s$. 
\end{proof}

Hence, we have a canonical labeling of the vertices $G_1, G_2, \dots, G_s$, by traveling along the path. Note moreover that both $G_1$ and $G_s$ are almost good colored graphs, as each contains no cut vertices of Type $X$, but will have a bad vertex at the intersection $V(G_1)\cap V(G_2)$ or $V(G_{s-1})\cap V(G_s)$, respectively.

Let $t=V(G_1)\cap V(G_2)$, so that $t$ is the bad vertex of $G_1$. Applying the induction hypothesis on $G_1$, there exists a decomposition of the edges of $G_1$ into cycles $\mathcal{C}=\{C_1, C_2, \dots, C_k\}$, such that $C_1$ is almost rainbow, and the remaining cycles are rainbow. Moreover, by Lemma \ref{setcycles}, $G_1\ba\{C_i\}$ has no cut vertices of Type $X$ for any $i$. 

Note that as $x$ is of Type II in $G_1$, we must have two cycles in $\mathcal{C}$ that use the vertex $x$. Moreover, at most one of these two cycles may use the vertex $t$, since $t$ is present in exactly one cycle in $\mathcal{C}$. Wolog, suppose that $x\in V(C_2)$, so that $t\notin V(C_2)$, since $t$ appears in the cycle $C_1$. Moreover, we may assume wolog that the adjacent vertices to $x$ in $C_2$ are $w_1$ and $w_2$. Write $C_2 = (x, w_1, u_2, \dots, u_\ell, w_2, x)$. This structure is shown in Figure \ref{F:2.2.1cXblock}. We note that as $u_i\neq y_j$ for any $j$, and $C_2$ is rainbow, that the path $w_1, u_2, \dots, u_ell, w_2$ does not use any of the colors $\alpha, \beta, \gamma$, or $\delta$.

\begin{figure}[htp]
\includegraphics[scale=.7]{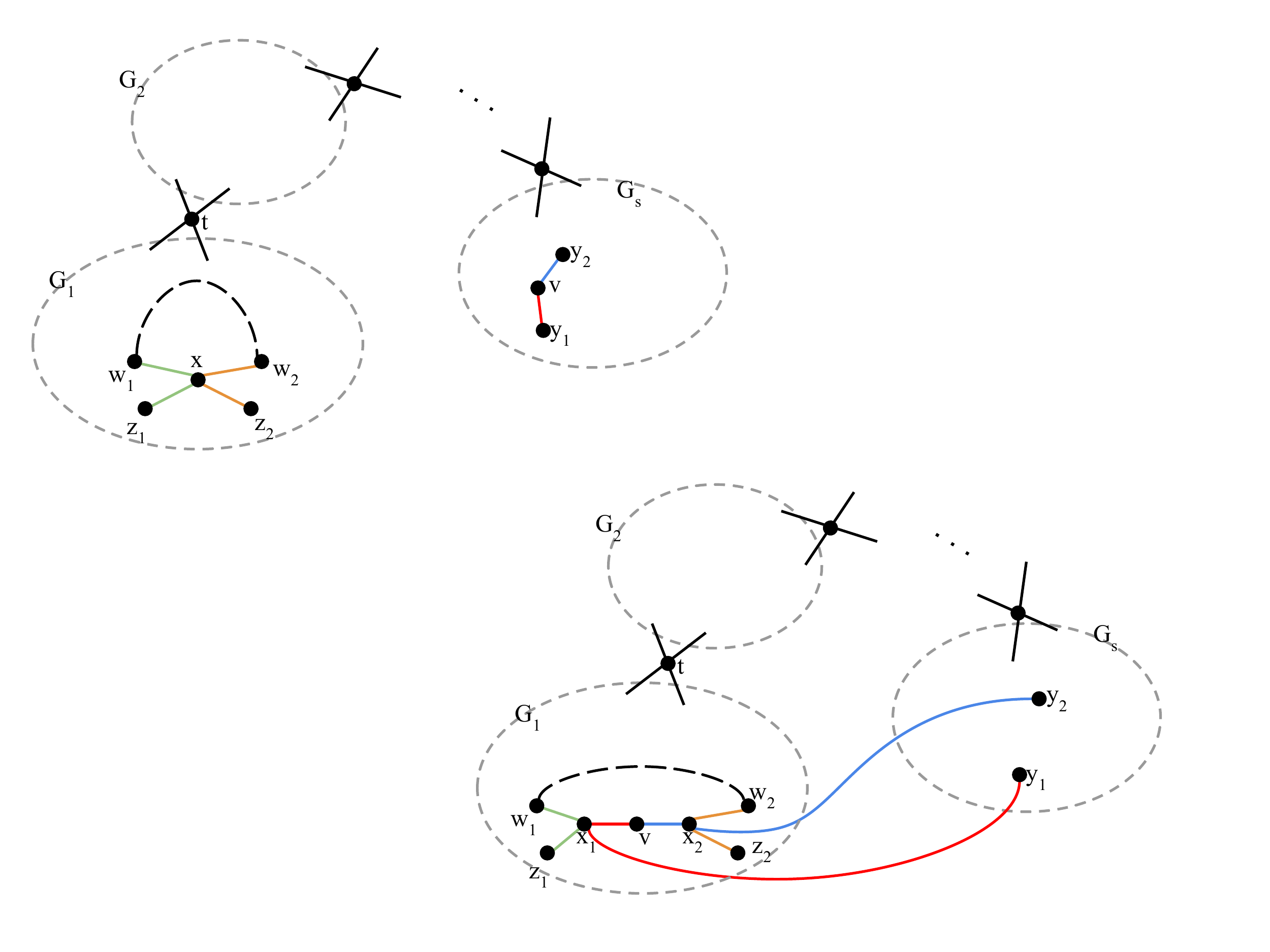}
\caption{The structure of the $X$-block decomposition in $G'$. Here, the dashed line between $w_1$ and $w_2$ indicates the remaining vertices of the cycle $C_2$, that is, the vertices $u_2, u_3, \dots, u_\ell$. Note that no edges of this path may be colored $\alpha, \beta, \gamma$, or $\delta$, due to the constraints on the number of vertices incident to each color set in $G$.}\label{F:2.2.1cXblock}
\end{figure} 

Let $C$ be the cycle in $G$ defined by $C=(v, x_1, w_1, u_2, \dots, u_\ell, w_2, x_2, v)$, see Figure \ref{F:2.2.1cXblockG}. As noted above, since the path from $w_1$ to $w_2$ along the $u_i$ does not use any colors among $\alpha, \beta, \gamma, \delta$, the cycle $C$ is rainbow. By Observation \ref{heredity}, to show that $G\ba C$ is good, we need only check the condition that $G\ba C$ contains no cut vertices of Type $X$. 

\begin{figure}[htp]
\includegraphics[scale=.7]{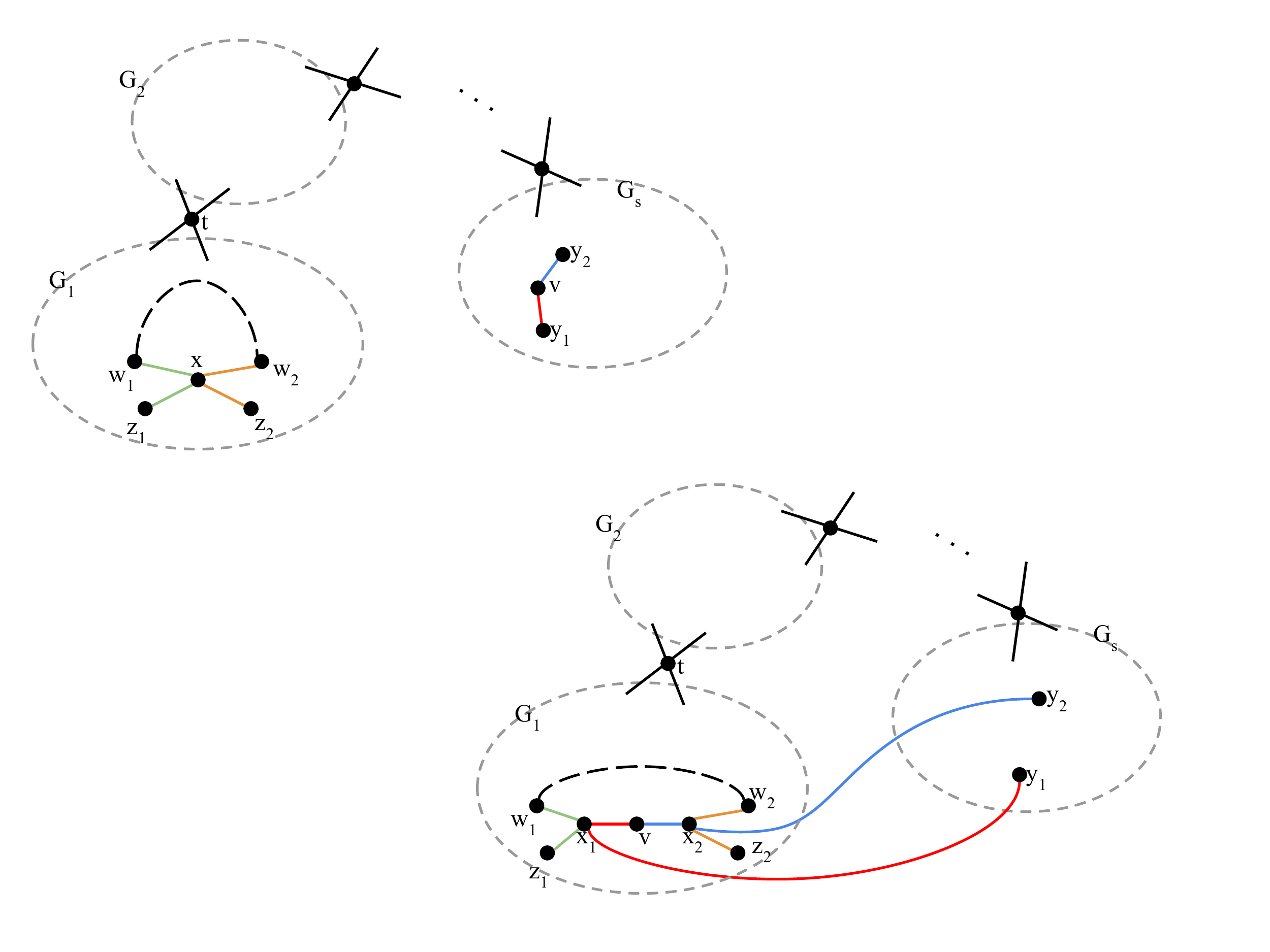}
\caption{The structure of $G$, taking into account the $X$-blocks from $G'$. As noted above, since no edges on the dashed path between $w_1$ or $w_2$ have color $\alpha, \beta, \gamma, $ or $\delta,$ the cycle $C=(v, x_1, w_1, u_2, \dots, u_\ell, w_2, x_2, v)$ is rainbow in $G$.}\label{F:2.2.1cXblockG}
\end{figure} 

To that end, let us suppose that $r$ is a cut vertex of Type $X$ in $G\ba C$. We note that as $x_1, x_2, y_1, y_2$ are all Type I vertices in $G\ba C$, that $r$ may not be one of these nodes.

Suppose that $r\in V(G_1)$, and we first suppose that $r\neq t$. First, we note that if $r$ were a cut vertex in $G_1$, it was not a cut vertex of Type $X$, that is, if $G_1'$ and $G_2'$ are pseudoblocks of $G_1$ at $r$, then $r$ must have two incident edges in $G_1'$ of different colors, and two incident edges in $G_2'$ of different colors. Moreover, by recalling that $G_1$ is connected, we must have that $G_1'$ is connected and $G_2'$ is connected. But the cycle $C$ may only intersect one of $G_1'$ and $G_2'$, say $G_1'$ wolog. But then $r$ is not a cut-vertex of Type $X$ in $G\ba C$, as removing $C$ still yields an induced connected subgraph $G_2'$ having two incident edges to $r$ with two different colors. Thus, $r$ is not a cut vertex in $G_1$.

Now, as $r$ is a cut vertex in $G\ba C$ it must be that there are a pair of vertices, $a, b\in V(G_1)$ such that the only paths from $a$ to $b$ in $G\ba C$ use the vertex $r$. Moreover, since $G_1\ba C_2$ is almost good, it must be that $r$ is not a cut vertex in $G_1\ba C_2$. Hence there is a path $P'$ in $G_1\ba C_2$ from $a$ to $b$ not using the vertex $r$ that is not present in $G\ba C$. We note that such a path must have used, as a subpath, the length two path $z_1xz_2$, as that is the only path that has been destroyed in the transformation from $G'\ba C_2$ to $G\ba C$. 

Let $P$ be a path in $G_s$ between $y_1$ and $y_2$, not using the vertex $v$. Note that such a path must exist, as $v$ is of degree 2 in $G_s$, and hence cannot be a cut vertex. Then we may form a new path in $G\ba C$ from $a$ to $b$, and not using the vertex $r$, by replacing the length two path $z_1xz_2$ in $P'$ with the path $z_1x_1y_1Py_2x_2z_2$. Hence, $r$ is not a cut vertex in $G\ba C$. This situation is illustrated in Figure \ref{F:2.2.1cG_1cut}.

\begin{figure}[htp]
\includegraphics[scale=.7]{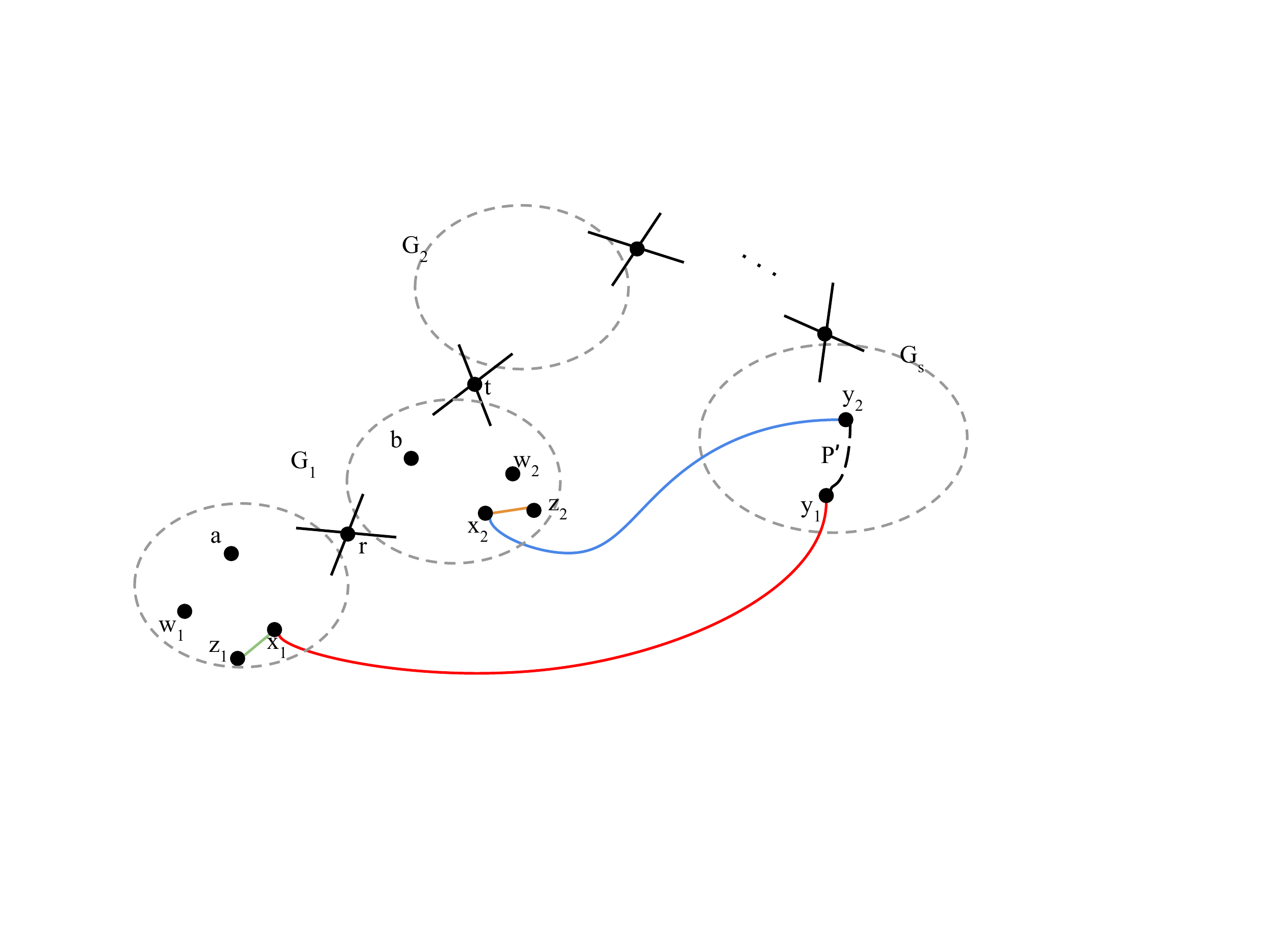}
\caption{The structure of $G\ba C$ in the case that $r\in V(G_1)$, with $r\neq t$, is a cut vertex of Type $X$ in $G\ba C$. Note that the first two ovals indicate the pseudoblocks of $G_1$ at $r$, and the remaining ovals indicate the $X$-block structure in $G'$. }\label{F:2.2.1cG_1cut}
\end{figure}

Therefore, any cut vertex of Type $X$ in $G\ba C$ must be from $G_i$, with $i>1$, or equal to $t$. Note that as we have not removed any edges or vertices from any $X$-block other than $G_1$, in fact we must have that any cut vertex of Type $X$ in $G\ba C$ must occur as the vertex $r=V(G_i)\cap V(G_{i-1})$ for some $i$. As we will add edges between $G_s$ and $G_1$ when we transform from $G'\ba C_2$ to $G\ba C$, we note that it must be the case that $t$ itself is a cut vertex of Type $X$. Moreover, as we will add the edges between $G_1$ and $G_s$ when we transform from $G'$ to $G$, the component of $G\ba C$ containing $x$ will be connected to all of $G_2, G_3, \dots, G_s$. Since $G_2$ also contains $t$, we must have that the removal of the cycle $C_2$ from $G_1$ disconnects $G_1$, in such a way that the vertices $z_1, x, z_2$ are in one component and the bad vertex $t$ is in another component (see Figure \ref{F:2.2.1ctacutsetup}).

\begin{figure}[htp]
\includegraphics[scale=.7]{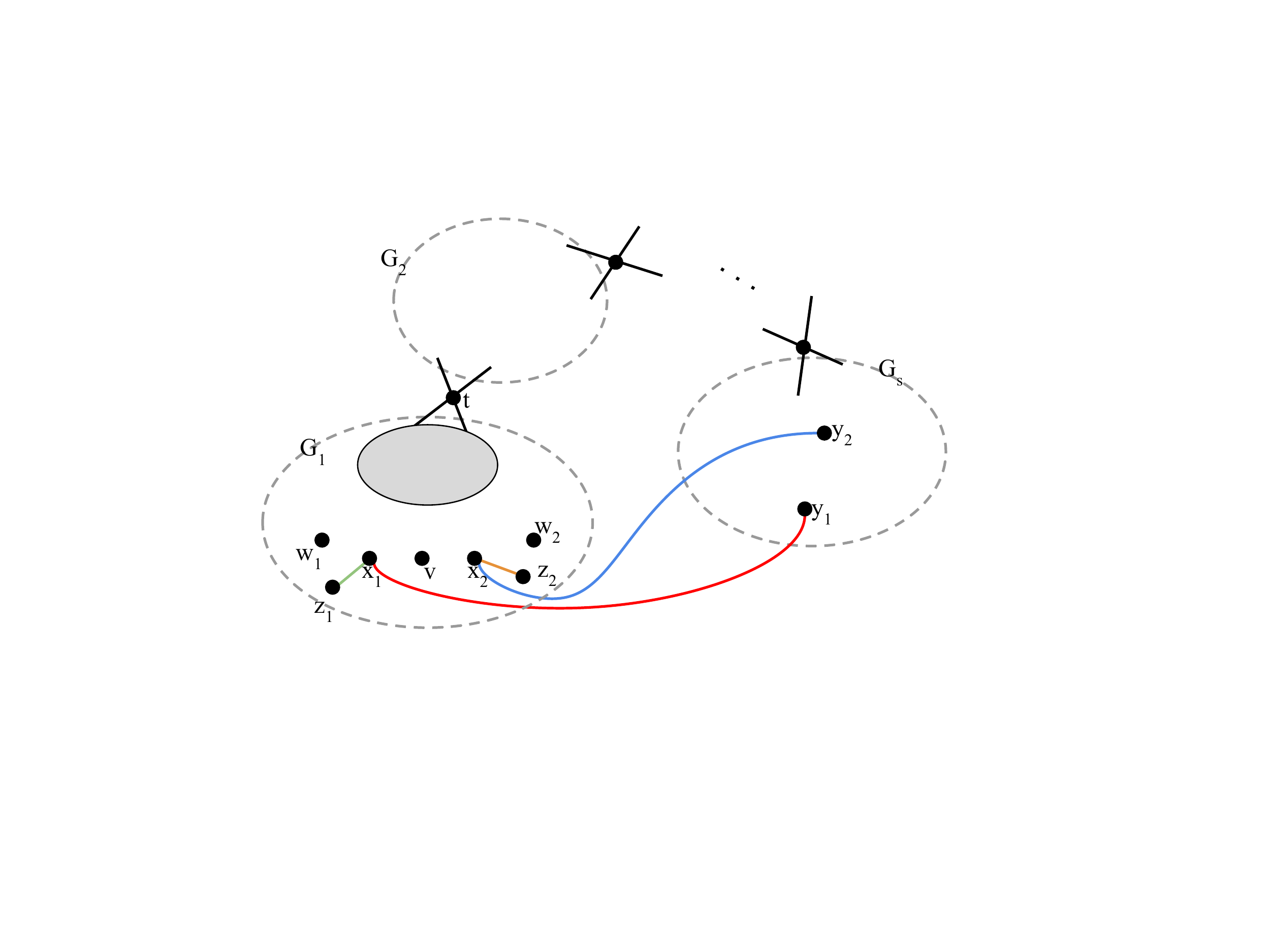}
\caption{ The structure of $G$ in the case that $t$ is a cut vertex of Type $X$ in $G\ba C$. Note that in order for the removal of $t$ to disconnect $G$, we must have that there is a collection of vertices (indicated here by the grey oval) in $G_1$ such that the removal of $t$ disconnects these vertices from the remainder of the graph. Since $x_1$ and $x_2$ are both connected via paths to $G_2$, we cannot have $x_1$ or $x_2$ in the component of $G_1\ba C$ containing $t$. Moreover, although it will not be relevant, none of $w_1, w_2, z_1,$ or $z_2$ can be connected to $t$ in $G\ba C$, as $w_i$ are both either isolated or adjacent to $z_i$, and $z_i$ are both adjacent to $x_i$.}\label{F:2.2.1ctacutsetup}
\end{figure}

Let $C_j\neq C_2$ be the other cycle in $\mathcal{C}$ using the vertex $x$. Note that since $C_j$ is edge disjoint from $C_2$, and $G_1\ba C_2$ has $t$ and $x$ in distinct components, we must have that $t$ is not used in the cycle $C_j$. 

Now, since $G_1$ is connected, there must be a path in $G_1$ from $x$ to $t$. Moreover, since removing $C_2$ disconnects $G_1$, we must have that there exists a vertex $a$ on $C_2$ such that there is a path from $x$ to $t$ that follows $C_2$ to the vertex $a$, then leaves $C_2$ and takes a path to $t$. Note that $a\neq z_1$ and $a\neq z_2$, as otherwise all edges on a path from $x$ to $t$ are present in $G_1\ba C_2$, and as noted above these vertices must be in distinct components of $G_1\ba C_2$. Note further that no edges on this path are used in $C_j$, as all edges are either members of $C_2$ or in a different component of $G_1\ba C_2$ from $C_j$. This is illustrated in Figure \ref{F:2.2.1ctacut}.

Now, notice that $C_j$ must be entirely contained in the component of $G_1\ba C_2$ containing $x$. Hence, we may replace $C_2$ with $C_j$ and repeat this argument; upon so doing, we must have that $x$ and $t$ are in the same component of $G_1\ba C_j$, since we may obtain a path between the two by following the path indicated above. But then, using $C_j$ in place of $C_2$, we have that $t$ is not a cut vertex of Type $X$ in $G\ba C$. Moreover, the previous analysis of cut vertices is unaffected, and hence with this new cycle $C$, we have that $G\ba C$ is a good colored graph.

\begin{figure}[htp]
\includegraphics[scale=.7]{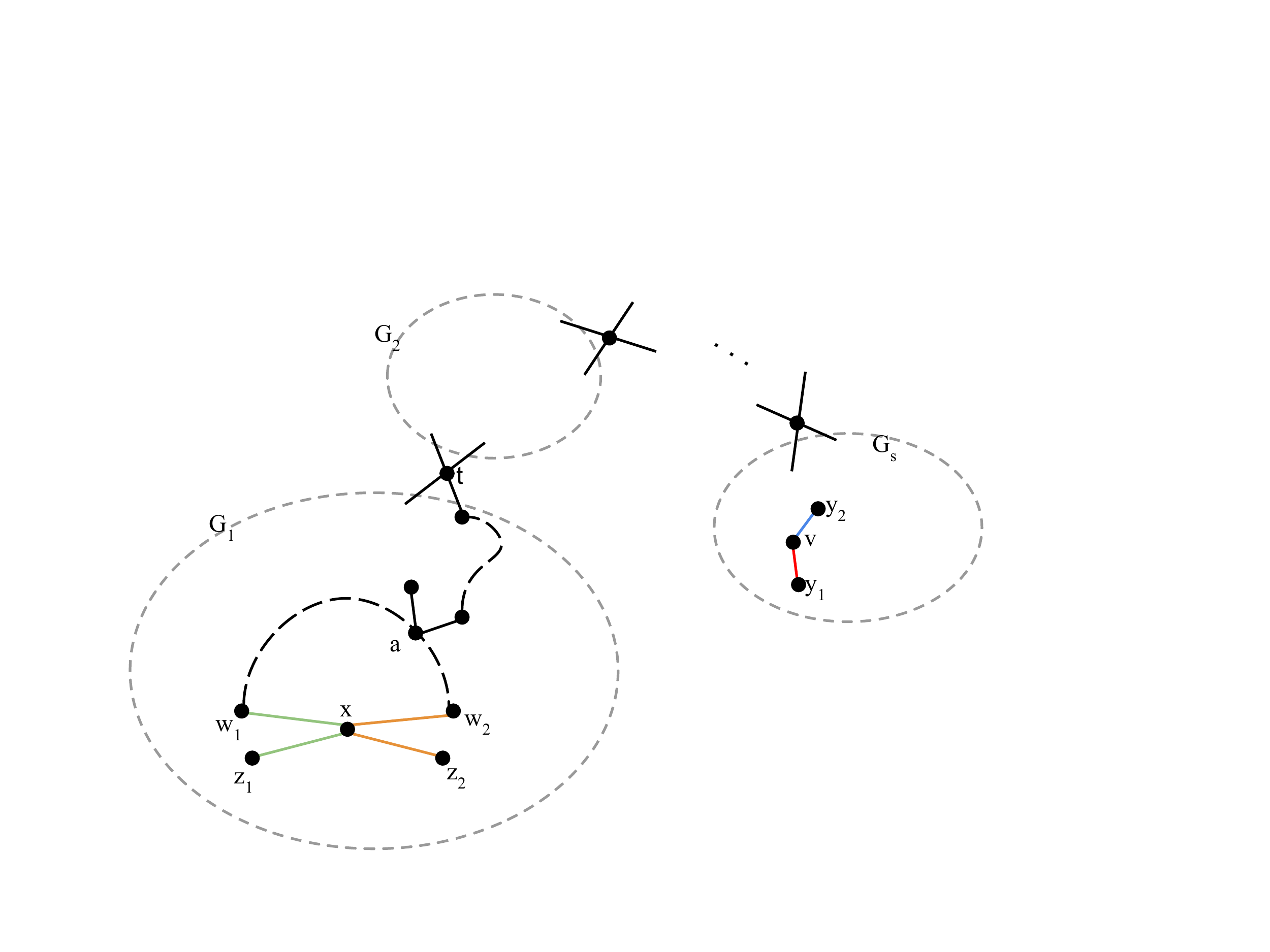}
\caption{ The structure of $G'$ in the case that $t$ is a cut vertex of Type $X$ in $G\ba C$ in Subcase 2.2.1(b).}\label{F:2.2.1ctacut}
\end{figure}

We now turn our attention to the final subcase, that in the basic structure necessary for Subcase \ref{C:nonsingular} illustrated in Figure \ref{F:2.2setup}, the neighbor sets of $x_1$ and $x_2$ are not disjoint.

\begin{sscase}The sets $\{y_1, w_1, z_1\}$ and $\{y_2, w_2, z_2\}$ are not disjoint.\label{C:overlap}
\end{sscase}
We consider here several possibilities, that cover all possible overlaps between these two sets, wolog. 

\quad

\noindent\textbf{Subcase 2.2.2(a).} $\{w_1, z_1\}$ is not disjoint from $\{w_2, z_2\}$.

\quad
Without loss of generality, let us suppose that $w_1=w_2$.
Let $C=(x_1,v,x_2,w_1,x_1)$. Note that $C$ is a rainbow cycle in $G$. Hence, we need only verify that $G\ba C$ contains no cut vertices of Type $X$. 

First, note that upon removing this cycle, we have that $x_1$ and $x_2$ are both vertices of Type I, and hence cannot be vertices of Type $X$. Suppose that one of $z_1, z_2$ is a cut vertex of Type $X$, wolog, suppose it is $z_1$. Note that this immediately implies that $z_1$ is a vertex of Type II in $G$, and hence the edge $z_1w_1$ must also be present, with color $\gamma$. But then $w_1=w_2$ is also a vertex of Type II in $G$, and therefore, the edge $z_2w_1$ must also be present, with color $\delta$ (this also implies that we cannot have $z_1=z_2$). However, this immediately implies that in any pseudoblock decomposition of $G\ba C$, the vertices $y_1, y_2, x_1, x_2, w_1, z_2$ are all in the same pseudoblock. Therefore, it must have been the case that $z_1$ was a cut vertex of Type $X$ in $G$, which is impossible (see Figure \ref{F:2.2.2az_1cut}).

\begin{figure}[htp]
\includegraphics{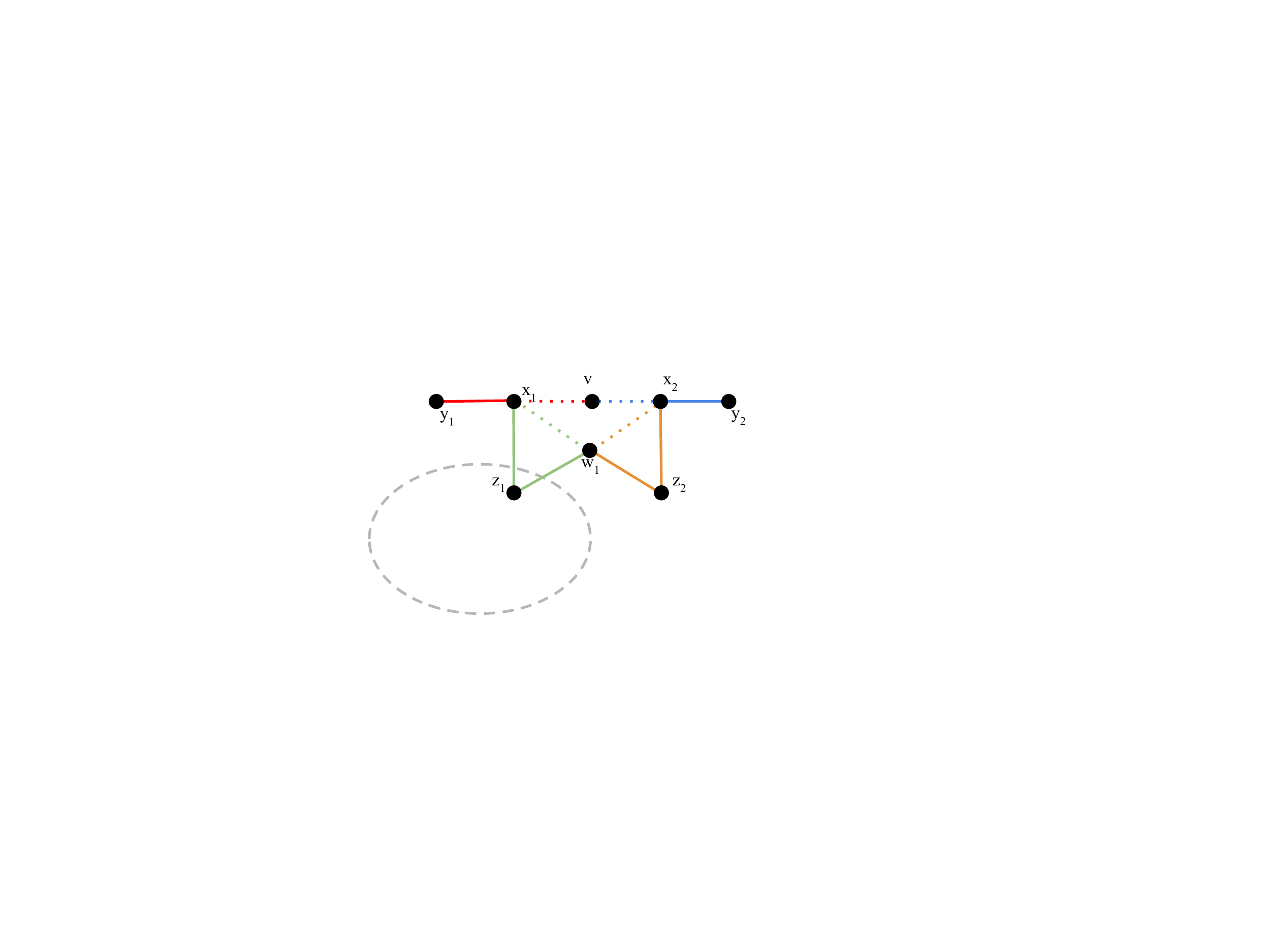}
\caption{The structure of $G$ in Subcase 2.2.2(a), when $z_1$ is a cut vertex of Type $X$. Here, the dashed gray oval represents one $X$-block of $G\ba C$, and the dotted lines indicate the rainbow cycle to be removed. Note that upon removing that cycle, we must have that the remaining labeled vertices are all in the same $X$-block of $G\ba C$. We include in this case the possibility (not pictured) that either $y_1=z_2$ or $y_2=z_1$.}\label{F:2.2.2az_1cut}
\end{figure}

Hence, there must be a cut vertex of Type $X$ that is not one of our heretofore labeled nodes. Suppose that $t$ is such a cut vertex, and let us take $G_1$ and $G_2$ to be pseudoblocks of $G$ at $t$. Note that we must have some vertices among $x_1, x_2, y_1, y_2, z_1, z_2, w_1, w_2$ in each of $G_1$ and $G_2$. Moreover, if $w_1=w_2$ is a Type II vertex in $G$, then the subgraph induced on $\{x_1, x_2, y_1, y_2, z_1, z_2, w_1, w_2\}$ is connected in $G\ba C$, as we would require $z_1w_1, z_2w_1\in E(G)$, and hence this is impossible. Therefore, it must be that $w_1$ is a Type I vertex in $G$. Similarly, if the sets $\{y_1, z_1\}$ and $\{y_2, z_2\}$ are not disjoint, we would also have this subgraph connected in $G\ba C$, and hence this is also impossible. Therefore, wolog, we have $x_1, z_1, y_1\in V(G_1)$ and $x_2, z_2, y_2\in V(G_2)$.

Note that in this situation, we must have that all of $y_1, y_2, z_1, z_2$ are vertices of Type I in $G$. Let their heretofore unlabeled neighbors be $\hat{y}_1, \hat{y}_2, \hat{z}_1, \hat{z}_2$, respectively. Note that every rainbow path between $G_1$ and $G_2$ either passes through $t$, or includes one of the following subpaths: $P_y=\hat{y}_1, y_1, x_1, w_1, x_2, y_2, \hat{y}_2$ or $P_z=\hat{z}_1, z_1, x_1, v, x_2, z_2, \hat{z}_2$. This structure is illustrated in Figure \ref{F:2.2.2aGcut}, left.

\begin{figure}[htp]
\includegraphics[scale=.8]{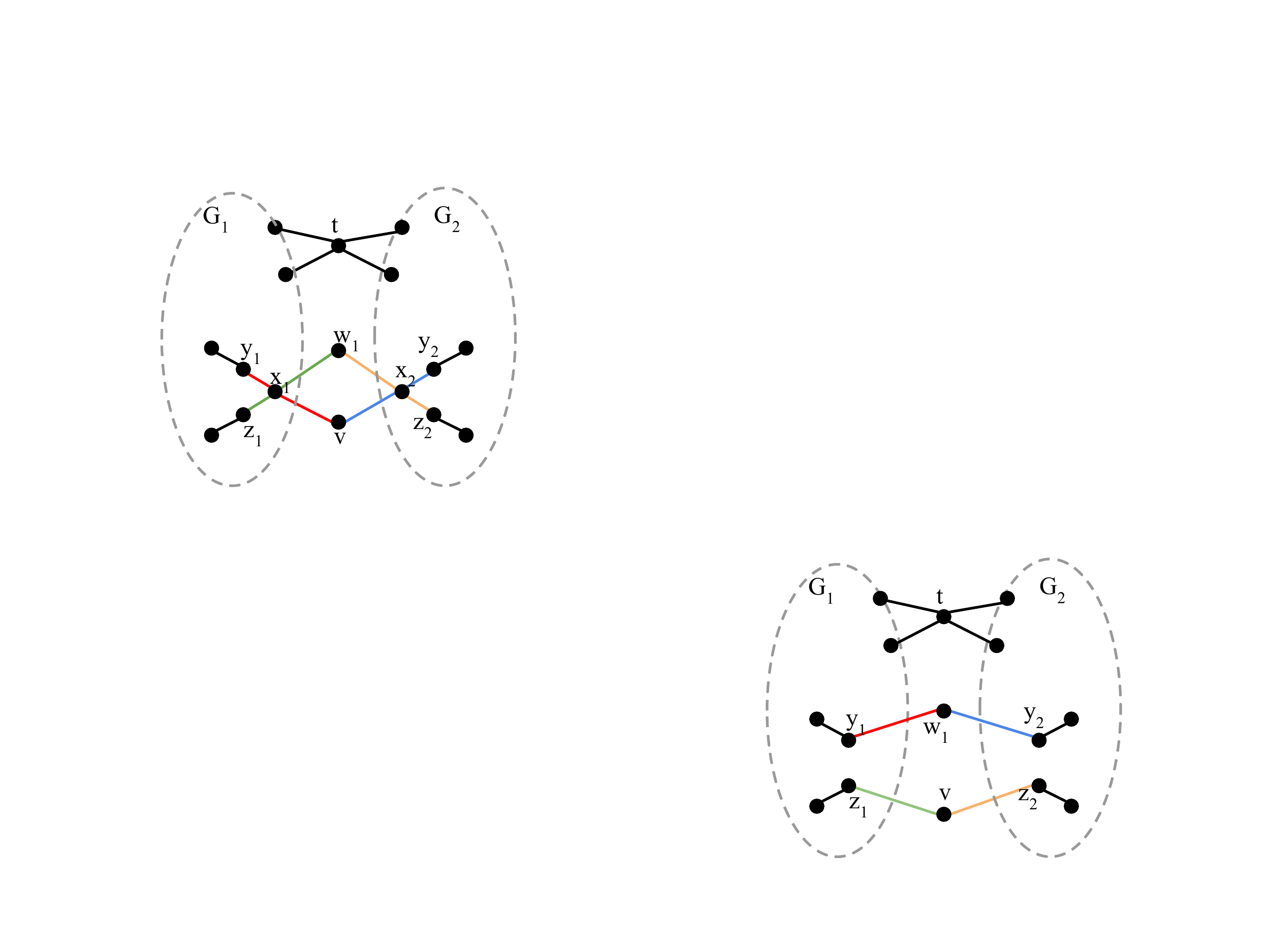}\hspace{.5in}
\includegraphics[scale=.8]{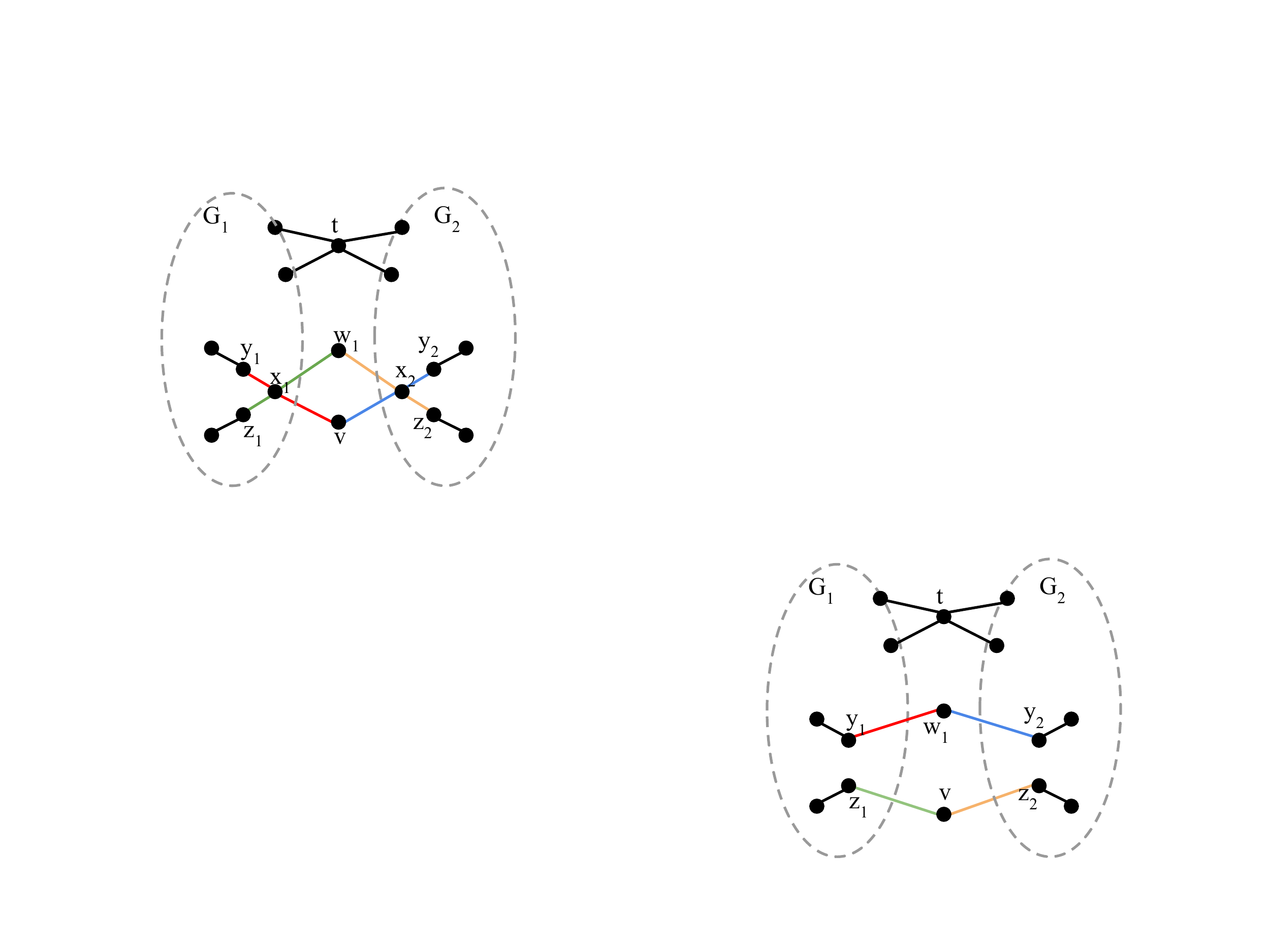}
\caption{The structure of $G$ (left) and $G'$ (right) in Subcase 2.2.2(a), in the case that there is a cut vertex in $G\ba C$ of Type $X$ other than $z_1$. Note that the unlabeled vertices incident to $y_1, y_2, z_1,z_2$ are $\hat{y}_1, \hat{y}_2, \hat{z}_1, \hat{z}_2$, respectively. We further note that these vertices may not be distinct.}\label{F:2.2.2aGcut}
\end{figure}

Create a new graph $G'$ from $G$ as follows:
\begin{itemize}
\item remove the edges $\{x_1y_1, x_1z_1, x_1v, x_1w_1, x_2y_2, x_2z_2, x_2v, x_2w_1\}$, that is, all edges colored $\alpha, \beta, \gamma, $ or $\delta$.
\item remove the vertices $x_1, x_2$.
\item add the edges $y_1w_1, y_2w_1, z_1v, z_2v$. Color these edges with $\alpha, \beta, \gamma, \delta$, respectively.
\end{itemize}

This structure $G'$ is illustrated in Figure \ref{F:2.2.2aGcut}, right. Note that clearly, we have created no triangles in $G'$, and moreover, there can be no additional cut vertices of Type $X$. As all other properties are clear from construction, we thus have that $G'$ is a good colored graph, having strictly fewer vertices than $G$. Hence, we may apply the induction hypothesis to obtain a rainbow cycle $C'$ in $G'$ that uses the vertex $y_1$; note that such a rainbow cycle must be present as we may decompose all edges of $G'$ into rainbow cycles.

As $y_1$ is of Type I in $G'$, we must have that $C'$ contains the entire path $\hat{y}_1, y_1, w_1, y_2, \hat{y}_2$, and we may thus replace this path by $P_y$ in $G$ to form a new rainbow cycle $C$. Moreover, $G\ba C$ can be obtained from $G'\ba C'$ by subdividing the edges $z_1v$ by $x_1$ and $vz_2$ by $x_2$. Hence, $G\ba C$ can contain no cut vertices of Type $X$, and therefore, $G\ba C$ is good. 

Therefore, if $w_1=w_2$, then $G$ contains a rainbow cycle $C$ such that $G\ba C$ is good.

We note that our analysis in this case, did not rely on the fact that $\{y_1, z_1\}$ and $\{y_2, z_2\}$ are disjoint, although this was a consequence of the presence of any cut vertices of Type $X$ in $G\ba C$. Hence, we may assume for all remaining cases that $\{w_1, z_1\}$ is disjoint from $\{w_2, z_2\}$.

\quad

\noindent\textbf{Subcase 2.2.2(b).} $y_1=y_2$.

\quad

Here, we have a rectangle $(x_1,y_1,x_2,v,x_1)$, and the only edges colored $\alpha$ or $\beta$ are in this rectangle. Contract the rectangle to form a new graph $G'$, having a single Type II vertex $x$, with neighbors $w_1, z_1, w_2$, and $z_2$, and recolor these edges as $\gamma,\gamma, \delta, \delta$, respectively; see Figure \ref{F:2.2.2b}.

\begin{figure}[htp]
\includegraphics[scale=.8]{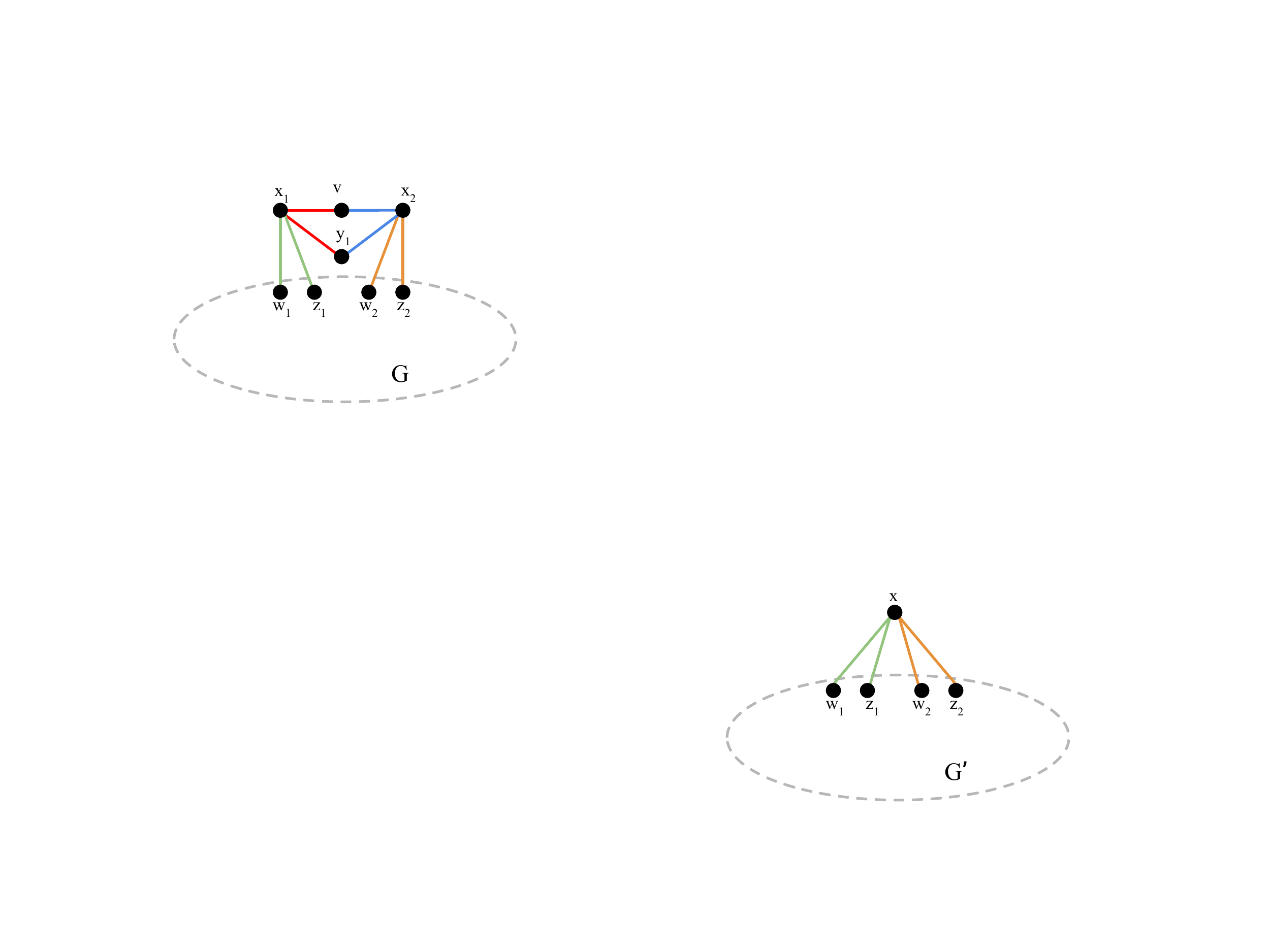}\hspace{.3in}
\includegraphics[scale=.8]{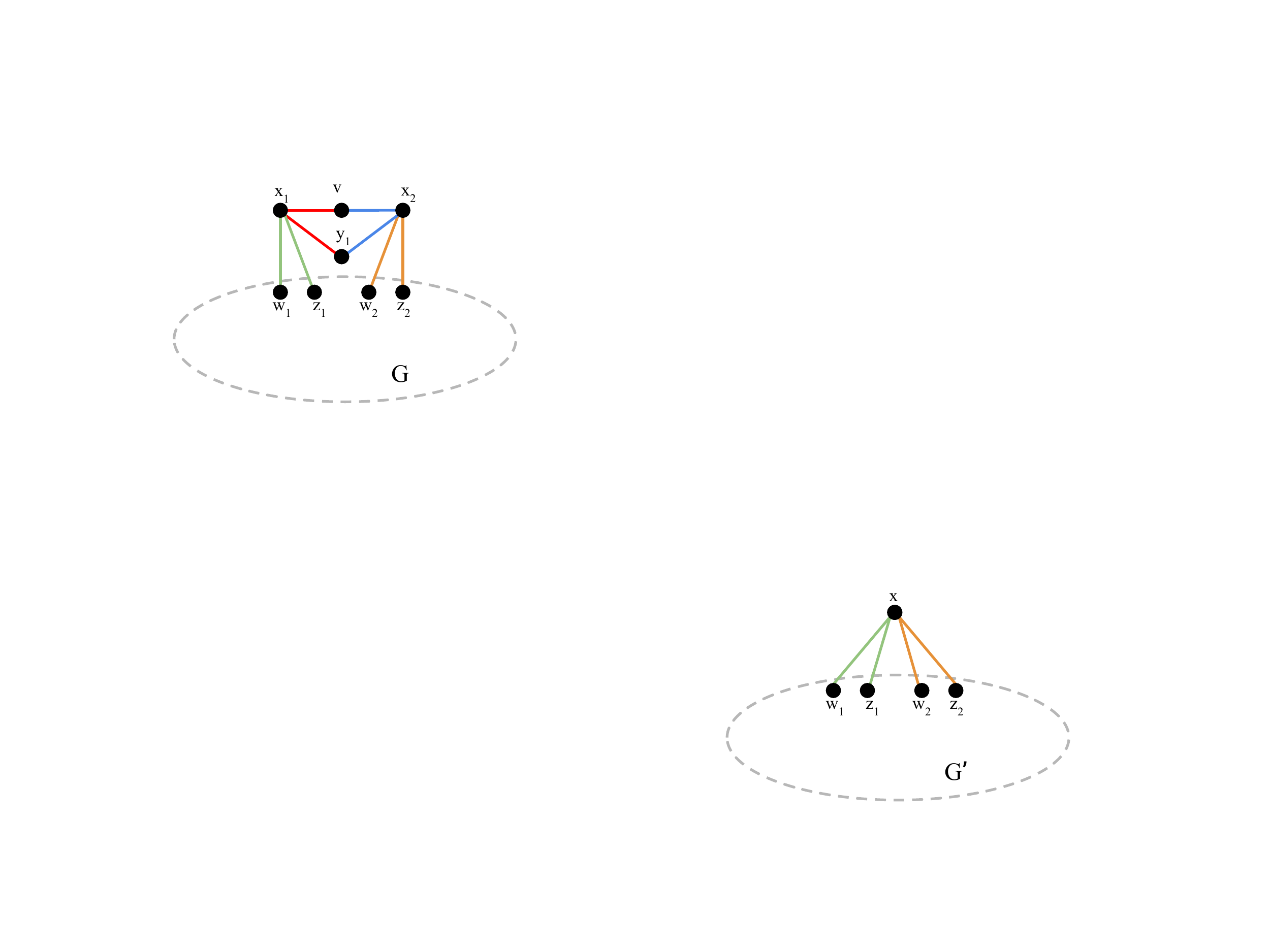}
\caption{The structure of $G$ (left) and $G'$ (right) in Subcase 2.2.2(b). }\label{F:2.2.2b}
\end{figure}

We note that $x$ cannot be a cut vertex of Type $X$ in $G'$, since if so, clearly $x_1$ and $x_2$ would also be cut vertices of Type $X$ in $G$. Moreover, if we have formed a triangle that was not present in the original graph $G$, it must be that this triangle uses the new vertex $x$, and (wolog) the vertices $w_1, w_2$. Note that in $G$, the edge $w_1w_2$ cannot use color $\delta$ or $\gamma$, as otherwise we would have more than three vertices incident to this color. Hence, the triangle created is rainbow. Therefore, $G'$ is good, so we may apply the inductive hypothesis to form a rainbow cycle $C'$ in $G'$, such that $G'\ba C'$ is good. 

If $C'$ does not use the vertex $x$, then $C'$ is a rainbow cycle in $G$, and clearly expanding $x$ back to a rectangle cannot introduce any cut vertices of Type $X$. If $C'$ does use the vertex $x$, then we may create a rainbow cycle $C$ in $G$ by replacing this vertex with the length two path $x_1y_1x_2$. As $C'$ does not use colors $\alpha$ or $\beta$, this is a rainbow cycle in $G$, and moreover, $G\ba C$ can be obtained from $G'\ba C'$ by subdividing the path through $x$ (which consists entirely of Type I vertices). Hence, $G\ba C$ is good.

\quad

\noindent\textbf{Subcase 2.2.2(c).} $y_1=w_2$, but $\{w_1, z_1\}$ is disjoint from $\{y_2, z_2\}$.

\quad

Note that $y_1=w_2$ must be a Type I vertex in $G$, and hence we have a rectangle $(x_1,y_1,x_2,v,x_1)$. Moreover, since $w_2$ is a Type I vertex, we cannot have the edge $w_2z_2$, and hence $z_2$ is also a Type I vertex, and no edges other than $w_2x_2$ and $z_2x_2$ can take color $\delta$. Form a new graph $G'$ by contracting this rectangle to a single vertex $x$, having neighbors $w_1, z_1, y_2, z_2$, and recolor these edges as $\gamma, \gamma, \beta, \beta$, respectively; see Figure \ref{F:2.2.2c}.

\begin{figure}[htp]
\includegraphics[scale=.8]{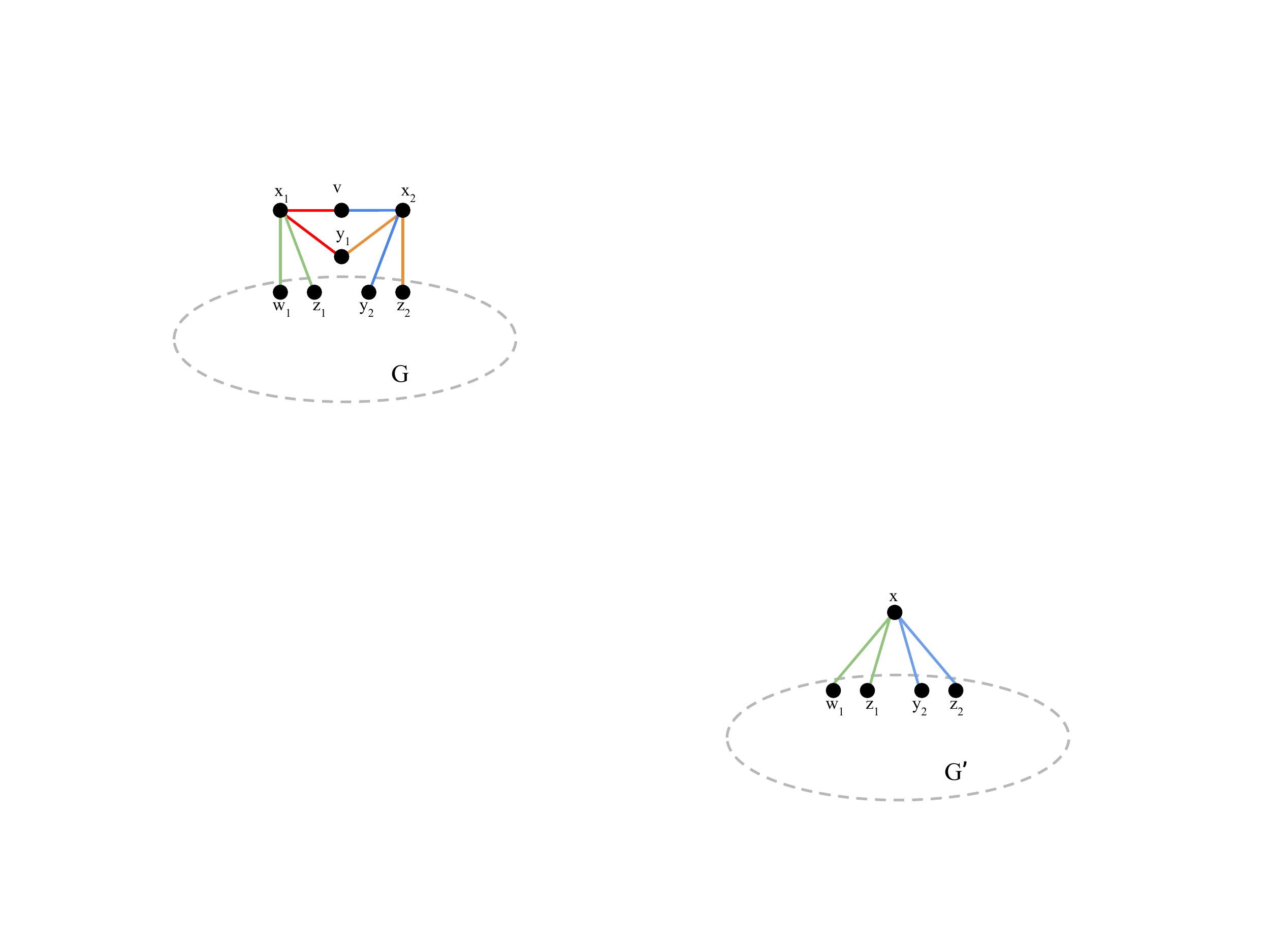}\hspace{.3in}
\includegraphics[scale=.8]{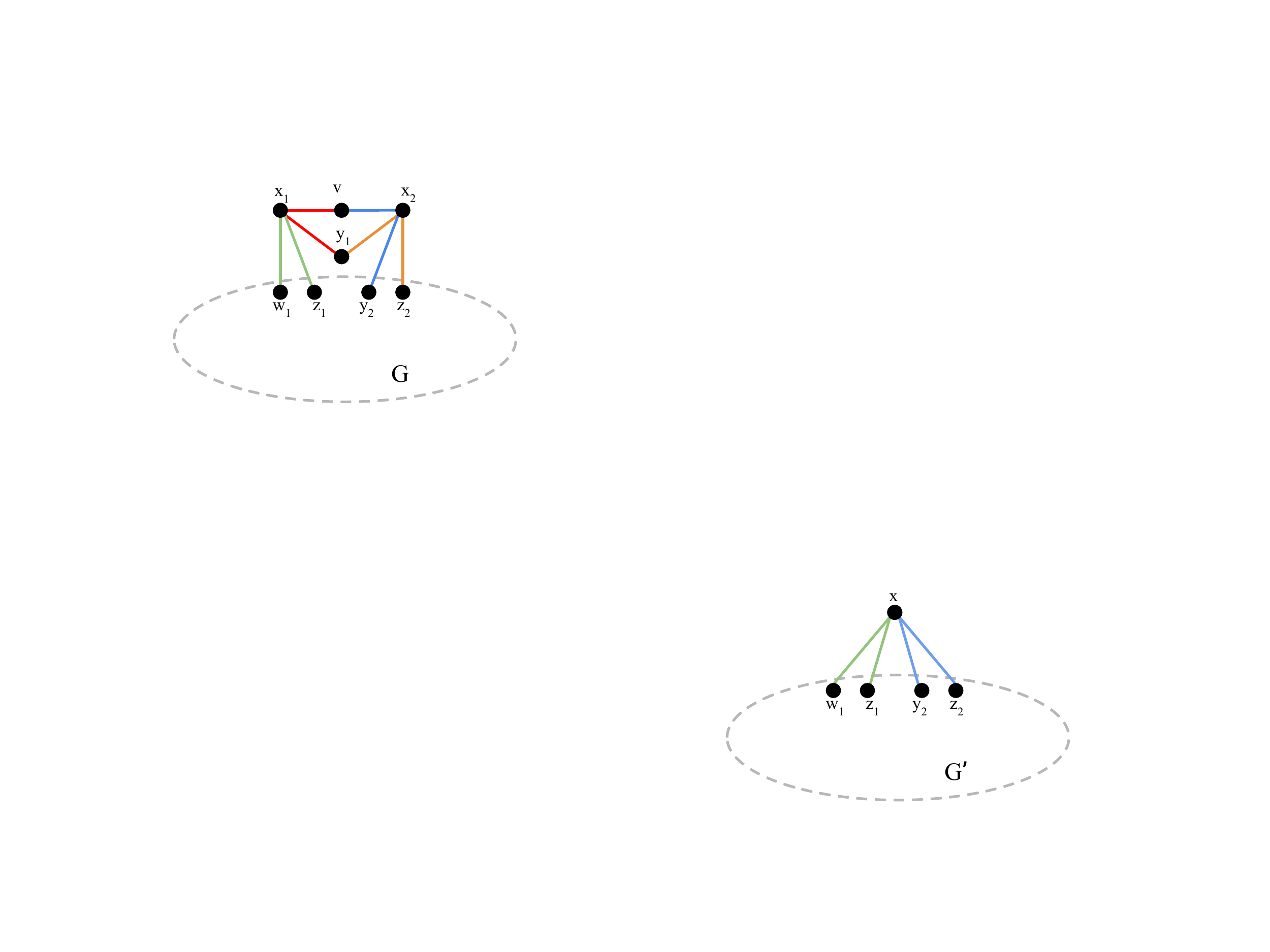}
\caption{The structure of $G$ (left) and $G'$ (right) in Subcase 2.2.2(c). }\label{F:2.2.2c}
\end{figure}

We note that $x$ cannot be a cut vertex of Type $X$ in $G'$, since if so, we have $x_1$ is a cut vertex of Type $X$ in $G$. Moreover, if we have formed a new triangle that was not present in the original graph $G$, it must be that this triangle uses the vertex $x$, and hence one of its other vertices is $w_1$ (wolog). But note that neither edge $w_1y_2$ nor $w_1z_2$ can use colors $\gamma$ or $\beta$, as we already have three vertices in $G$ incident to these color classes, and hence any new triangle formed must be rainbow.

Therefore, $G'$ is good, and by induction there exists a rainbow cycle $C'$ in $G'$ such that $G'\ba C'$ is good. If $C'$ does not use the vertex $x$, then $C'$ is a rainbow cycle in $G$, and clearly expanding $x$ back to a rectangle cannot introduce any cut vertices of Type $X$. If $C'$ does use the vertex $x$, then there are two possibilities. Either the edge $xy_2$ is used, or the edge $xz_2$ is used; note that one of these must be true as any rainbow cycle through $x$ must use color $\beta$. If the edge $xy_2$ is used, we shall form a rainbow cycle $C$ in $G$ by replacing this edge with the path $x_1y_1x_2y_2$, which replaces an edge of color $\beta$ with three edges, having colors $\alpha, \delta, \beta$, respectively. Moreover, we note that $G\ba C$ can be obtained from $G'\ba C'$ by subdividing the edge $xz_2$ with the vertex $v$, and recoloring appropriately. Hence, $G\ba C$ cannot contain a cut vertex of Type $X$, and thus $G\ba C$ is good.

On the other hand, if the edge $xz_2$ is used in $C'$, we similarly create a rainbow cycle $C$ in $G$ by replacing this edge with the path $x_1vx_2z_2$, having colors $\alpha, \beta, \delta$, respectively. As above, this will yield a good colored graph $G\ba C$.

\quad

\noindent\textbf{Subcase 2.2.2(d).} $y_1=w_2$ and $y_2=w_1$.

\quad

Note that in this case, as $y_1$ and $y_2$ are both Type I vertices in $G$, we have that $\{z_1, z_2\}$ is a cutset of $G$, and both $z_1$ and $z_2$ are vertices of Type I; see Figure \ref{F:2.2.2d} Moreover, we have a rainbow cycle $x_1y_1x_2y_2$. Clearly, we cannot create any cut vertices of Type $X$ by the removal of this rainbow cycle, as no cut vertices are introduced at all.

\begin{figure}[htp]
\includegraphics[scale=.8]{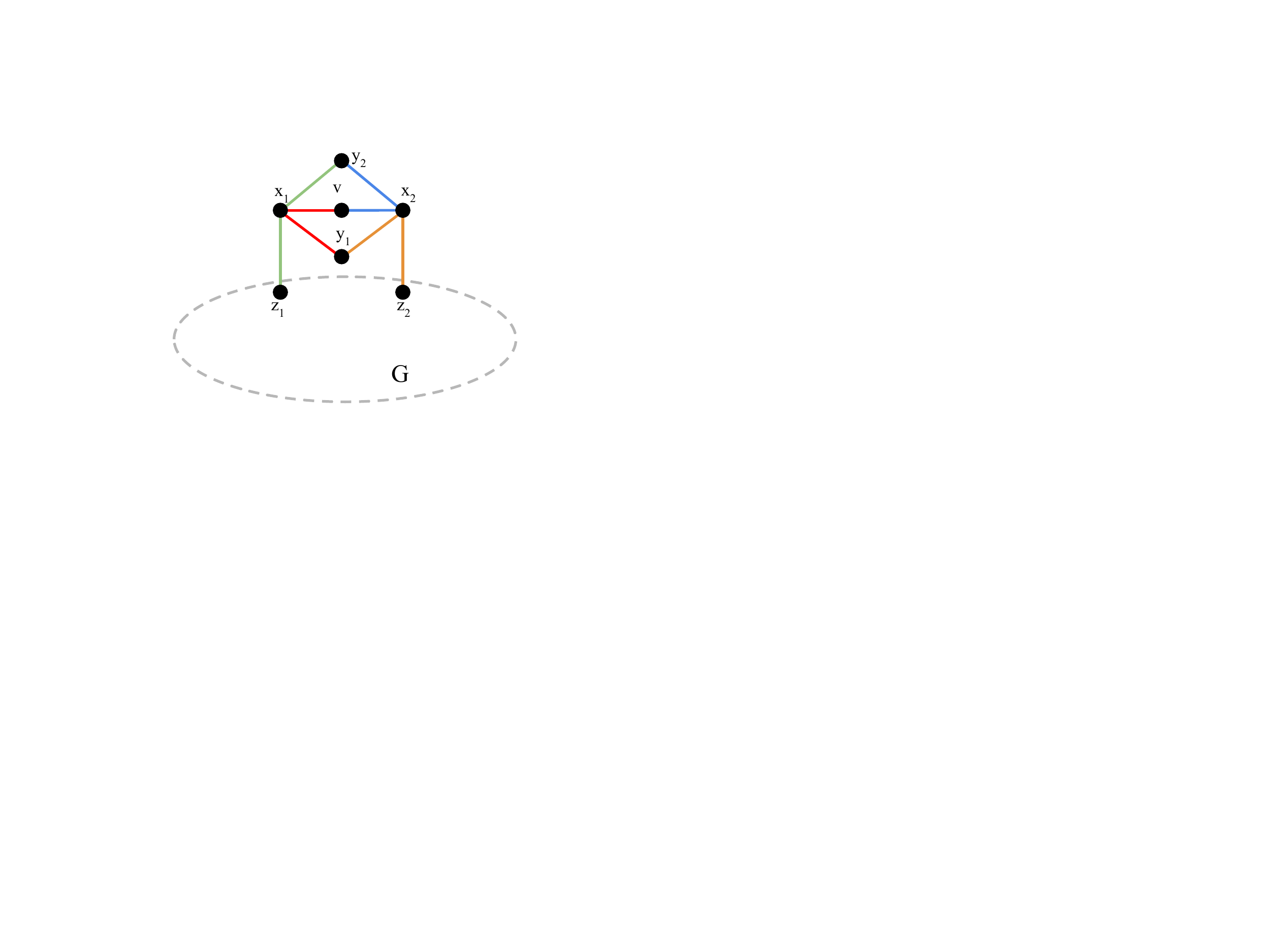}
\caption{The structure of $G$ in Subcase 2.2.2(d).}\label{F:2.2.2d}
\end{figure}

\quad

\quad

Therefore, if $G$ is a good or almost good graph having $n$ vertices and $m$ edges, we can find a rainbow cycle or almost-rainbow cycle, respectively, in $G$, such that $G\ba C$ is good or almost good, respectively. By then applying the induction hypothesis, we therefore have a decomposition of $G$ into cycles, such that at most one such cycle is almost rainbow, and the remainder are rainbow.

\end{proof}

\section{Conclusions and Conjectures}\label{S:conclusions}

We note, as mentioned in Section \ref{S:intro}, that our proof technique also resolves Conjecture \ref{C:Goddyn} in the case of 3-regular graphs. This conjecture is implied by our proof technique, as in Lemma \ref{type2}, the rainbow cycle chosen was entirely arbitrary. Hence, the first cycle we remove from $G$ is irrelevant, as removing any rainbow cycle from $L(G)$ will allow us to proceed with the remainder of the inductive proof.

However, it is unclear from this proof if Goddyn's conjecture holds in the general case. Hence, for graphs that are not 3-regular, Goddyn's conjecture remains unresolved.

In addition, we suspect that the technique used here could also be used to consider cycle $k$-covers for certain graphs, as follows. A cycle $k$-cover is a collection of cycles in $G$ for which every edge of $G$ is contained in exactly $k$ of the cycles.
\begin{conjecture}
Let $G$ be a $k$-regular graph, $(k-1)$-edge connected graph. Then there exists a list of cycles $\mathcal{C}$ in $G$ such that every edge of $G$ appears in exactly $k-1$ cycles.
\end{conjecture}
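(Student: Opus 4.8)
The plan is to mimic the line-graph reduction used above for the CDCC, with the single parameter $3$ replaced by $k$. Fix $k\ge 3$ and, given a $k$-regular $(k-1)$-edge-connected graph $G$, colour the line graph $L=L(G)$ by $V(G)$ exactly as in Section~\ref{S:defs}. Now each colour class is a copy of $K_k$ rather than $K_3$, every vertex of $L$ lies in exactly two colour classes, and hence $L$ is $2(k-1)$-regular. The proof of Lemma~\ref{ltog} goes through verbatim: a sequence $(e_1,\dots,e_m)$ of distinct vertices of $L$ is a rainbow cycle in $L$ if and only if $e_1,\dots,e_m$ are the edges of a cycle in $G$, since distinct colours are distinct vertices of $G$. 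The first step is therefore to establish the analogue of Lemma~\ref{equiv}: \emph{if $L(G)$ has a decomposition of its edges into rainbow cycles, then $G$ has a cycle $(k-1)$-cover}. This follows by the same double counting: in such a decomposition each vertex of the $2(k-1)$-regular graph $L$ lies on exactly $k-1$ of the cycles (these being genuine cycles, each uses two incident edges there), and translating back to $G$ this says every edge of $G$ lies on exactly $k-1$ of the corresponding cycles. Unlike the case $k=3$, I would not expect the converse implication to hold for $k\ge 4$, but only this direction is needed.

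\textbf{Generalising ``good''.} The second step is to introduce a family of edge-coloured graphs --- call them \emph{$k$-good} --- playing the role of the good graphs of Section~\ref{S:setup}, requiring: every vertex of even degree; maximum degree at most $2(k-1)$; every triangle rainbow or monochromatic; every nonisolated vertex of colour degree $2$; the subgraph induced by each colour class having at most $k$ vertices (so each colour class is a subclique of $K_k$); and no cut-vertex of type $X$, where now ``type $X$'' means a degree-$2(k-1)$ cut vertex whose $k-1$ edges of one colour go into one pseudoblock and whose $k-1$ edges of the other colour go into the other. One then checks, exactly as in the cubic case, that (i) $L(G)$ is $k$-good whenever $G$ is $k$-regular, the no-type-$X$ condition corresponding to the absence of bridges in $G$; and (ii) removing a rainbow cycle from a $k$-good graph preserves all properties except possibly the no-type-$X$ one (the analogue of Observation~\ref{heredity}), together with the analogues of Lemmas~\ref{conn}--\ref{setcycles} and~\ref{rainbowtri}. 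I would also set up a ``$k$-almost-good'' relaxation, most likely allowing several vertices of deficient colour degree rather than just one, since a single reduction step at a vertex of degree $2(k-1)$ can strip more than one colour; and the corresponding notion of an almost-rainbow cycle.

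\textbf{The inductive core.} The third and main step is to prove the analogue of Theorem~\ref{T:mainthm}: every $k$-good (resp.\ $k$-almost-good) graph has a decomposition of its edges into rainbow (resp.\ all-but-boundedly-many rainbow) cycles, by induction on the number of vertices and then edges. As in Section~\ref{S:proof}, the argument would split on the longest singular path and on the local structure around a vertex of small colour degree, and in each case one contracts or reroutes a bounded piece of local structure to produce a strictly smaller $k$-good or $k$-almost-good graph, lifts a rainbow cycle back, and shows the deletion leaves no type-$X$ cut vertex, again via an $X$-block decomposition and the hereditary lemmas above.

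\textbf{The main obstacle.} I expect two places to be genuinely hard. First, the case analysis in Section~\ref{S:proof} is already long for $k=3$, where a Type~II vertex carries just two edges of each of two colours; for general $k$ each vertex can carry up to $k-1$ edges of each colour, so the local configurations around a low-colour-degree vertex --- and the contraction/rewiring gadgets used to eliminate them --- proliferate, and keeping a contraction from creating a non-rainbow non-monochromatic triangle or a fresh type-$X$ cut vertex becomes correspondingly delicate. Second, and more fundamentally, one must pin down why $(k-1)$-edge-connectivity is \emph{exactly} the right hypothesis: a cut vertex of type $X$ in $L(G)$ corresponds only to a bridge of $G$, so $2$-edge-connectivity already suffices for the base graph, and the extra connectivity must be consumed somewhere in the induction --- presumably in controlling cut-sets of $L(G)$ of size up to $k-1$ (the images of $(k-1)$-edge-cuts of $G$), which have no analogue when $k=3$. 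Getting the definition of $k$-good calibrated so that both (i) $L(G)$ satisfies it under the stated hypothesis and (ii) it is preserved by all the reductions is, I think, the crux; if the bare translation of ``good'' turns out too weak or too strong, the whole scheme must be recalibrated, possibly by building the connectivity budget directly into the definition and tracking it through the induction.
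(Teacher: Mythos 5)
This statement is posed in the paper as an open conjecture in Section \ref{S:conclusions}; the paper offers no proof of it, only a short discussion of why the line-graph technique might plausibly extend and where it is expected to break down. Your proposal does not close that gap: it is a research plan rather than a proof, and you say as much yourself. The first step is sound --- the analogue of Lemma \ref{ltog} does go through for $k$-regular $G$, and the one direction of Lemma \ref{equiv} you need (rainbow cycle decomposition of the $2(k-1)$-regular graph $L(G)$ implies a cycle $(k-1)$-cover of $G$) is a correct double count. But everything after that is conditional. The entire content of the result would lie in the analogue of Theorem \ref{T:mainthm} for your proposed class of $k$-good graphs, and you neither state that theorem precisely (the ``$k$-almost-good'' relaxation and the ``all-but-boundedly-many rainbow'' conclusion are left unspecified, with no bound given) nor carry out any of the case analysis, nor verify that the hereditary lemmas (the analogues of Lemmas \ref{conn}, \ref{setcycles}, and \ref{rainbowtri}) survive the generalization.

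More concretely, the obstruction you would have to overcome is not the one you encode. You define ``type $X$'' for general $k$ as a single cut vertex with all $k-1$ edges of one colour into one pseudoblock, and you correctly observe that this corresponds only to a bridge of $G$, so forbidding it cannot be what consumes the $(k-1)$-edge-connectivity hypothesis. The paper's own discussion identifies the true obstruction as a condition on cut-\emph{sets} $S$: no more than half of the edges incident to $S$ within one pseudoblock may share a colour. For $k=3$ this collapses to the single-vertex type-$X$ condition, which is exactly why the paper's induction is manageable; for $k\ge 4$ it does not collapse, and a rainbow cycle decomposition can be blocked by small cut-sets that your definition of $k$-good does not exclude, so your class is too weak for the induction to close. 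Until that cut-set condition is formulated, shown to hold in $L(G)$ under $(k-1)$-edge-connectivity, and shown to be preserved by every contraction, rewiring, and cycle deletion in the inductive step, there is no proof here --- and you acknowledge that recalibrating the definition may force the entire scheme to be rebuilt.
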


We note that in this case, the color classes in $L(G)$ are $k$-cliques, and every vertex in $L(G)$ is a member of two of these, and hence has $k-1$ neighbors in each of two incident color classes. Thus, any decomposition of the edges of $L(G)$ into rainbow cycles would produce a cycle $(k-1)$-cover, as suggested by the conjecture. The difficulty in generalizing to this case is likely to be found in how to generalize a cut vertex of Type $X$. The true condition here is that if $S$ is a cutset, and $G_1$ and $G_2$ are pseudoblocks corresponding to $S$, then no more than half the edges incident to vertices of $S$ in $G_1$ may have the same color. In the case of a 3-regular graph, this is automatically true provided that the cut set has at least two vertices, or the cutset has one vertex, but its two edges in $G_1$ are not of the same color. In the case of a higher regularity, this condition becomes more obtuse. 

There are many other standing conjectures related to cycle covers in graphs, and we do not doubt that similar techniques might be used to approach these conjectures. Many of these can be found in \cite{jaeger1985survey}.

In addition, there are many questions here relating, rather than to cycle covers, to decompositions of edge colored graphs into rainbow cycles or subgraphs. Here, we show that if $G$ is an edge colored graph, such that every color class of $G$ is a triangle, no two color classes share more than one vertex, and $G$ has no cut vertices, then the edges of $G$ may be decomposed into a set of disjoint rainbow cycles. This begs the question: under what conditions can such a decomposition be guaranteed?

\begin{question}Let $G$ be an even, edge-colored graph having no cut vertices. Under what conditions on the color classes of $G$ can it be assured that $G$ has an edge decomposition into disjoint rainbow cycles?
\end{question}

More specifically, we restricted here to the case that each color class has at most 3 vertices, and no two color classes share more than one vertex. Is it possible that under the second condition, a similar proof could be found for a graph having more vertices in each color class?

\begin{question}\label{decompd}
Let $G$ be an even, edge-colored graph having no cut vertices, and suppose that any two color classes share no more than one vertex. Moreover, suppose that the subgraph induced on each color class contains at most $d$ vertices. For what values of $d$ can we guarantee that the edges of $G$ can be decomposed into rainbow cycles?
\end{question}

This paper answers Question \ref{decompd} in the case that $d=3$ and $n\geq 6$. The case that $d=2$ is trivial; every edge of $G$ has a unique color. However, a generalization of the proof in this article to the case that $d\geq 4$ is not apparent. Further, it may be that a precise value of $d$ will depend upon $n$; in the case that $d=3$ analyzed in this proof, we have at least $\frac{2}{3}n$ distinct colors available. It may be that there is a function $d=d(n)$ such that the decomposition will be possible in the case of any even edge-colored graph on $n$ vertices, having at most $d(n)$ vertices in each color class.

\section{Acknowledgements}
The author would like to extend sincere gratitutde to Paul Horn for his thoughts in the development of this approach, and to SOMEBODY for proofreading this manuscript.

\bibliographystyle{siam}
\bibliography{bib_items}

\begin{thebibliography}{1}

\bibitem{cai1992cycle}
{\sc L.~Cai and D.~Corneil}, {\em On cycle double covers of line graphs},
  Discrete mathematics, 102 (1992), pp.~103--106.

\bibitem{chan2009survey}
{\sc M.~Chan}, {\em A survey of the cycle double cover conjecture}, Princeton
  University,  (2009).

\bibitem{chartrand2010graphs}
{\sc G.~Chartrand, L.~Lesniak, and P.~Zhang}, {\em Graphs \& digraphs}, CRC
  Press, 2010.

\bibitem{jaeger1985survey}
{\sc F.~Jaeger}, {\em A survey of the cycle double cover conjecture},
  North-Holland Mathematics Studies, 115 (1985), pp.~1--12.

\bibitem{seymour1979sums}
{\sc P.~D. Seymour}, {\em Sums of circuits}, Graph theory and related topics, 1
  (1979), pp.~341--355.

\bibitem{szekeres1973polyhedral}
{\sc G.~Szekeres}, {\em Polyhedral decompositions of cubic graphs}, Bulletin of
  the Australian Mathematical Society, 8 (1973), pp.~367--387.

\end{thebibliography}

\end{document}